\DeclareFontFamily{T1}{pzc}{}
\DeclareFontShape{T1}{pzc}{m}{it}{1.8 <-> pzcmi8t}{}
\DeclareMathAlphabet{\mathpzc}{T1}{pzc}{m}{it}
\theoremstyle{plain}
\newtheorem{prop}{Proposition}[section]
\newtheorem{lem}[prop]{Lemma}
\newtheorem{thm}[prop]{Theorem}
\theoremstyle{definition}
\newtheorem{defn}[prop]{Definition}
\newtheorem{empt}[prop]{}
\newtheorem{exm}[prop]{Example}
\newtheorem{rem}[prop]{Remark}
\DeclareMathOperator{\Dom}{Dom}              
\newcommand{\vertiii}[1]{{\left\vert\kern-0.25ex\left\vert\kern-0.25ex\left\vert #1
    \right\vert\kern-0.25ex\right\vert\kern-0.25ex\right\vert}}
\newcommand{\Ga}{\Gamma}                     
\newcommand{\Coo}{C^\infty}                  
\newbox\ncintdbox \newbox\ncinttbox 
\newcommand{\ncint}{\mathop{\mathchoice{\copy\ncintdbox}%
           {\copy\ncinttbox}{\copy\ncinttbox}%
           {\copy\ncinttbox}}\nolimits}  
\title{Infinite  Noncommutative Covering Projections}
\begin{document}
\maketitle  \setlength{\parindent}{0pt}
\begin{center}
\author{
{\textbf{Petr R. Ivankov*}\\
e-mail: * monster.ivankov@gmail.com \\
}
}
\end{center}

\vspace{1 in}

\noindent

Gelfand - Na\u{i}mark theorem supplies a one to one correspondence between commutative $C^*$-algebras and locally compact Hausdorff spaces. So any noncommutative $C^*$-algebra can be regarded as a generalization of a topological space.  Generalizations of several topological invariants may be defined by algebraic methods. For example Serre Swan theorem \cite{karoubi:k} states that complex topological $K$-theory coincides with $K$-theory of $C^*$-algebras. This article devoted to the noncommutative generalization of infinite covering projections. Infinite covering projections of spectral triples are also discussed. It is shown that covering projection of foliation algebras can be constructed by topological coverings of foliations and isospectral deformations. Described an interrelationship between noncommutative covering projections and $K$-homology. The Dixmier trace of noncommutative covering projections is discussed.

\tableofcontents

\section{Motivation. Preliminaries}

Following Gelfand-Na\u{i}mark theorem  \cite{arveson:c_alg_invt} states the correspondence between  locally compact Hausdorff topological spaces and commutative $C^*$-algebras.

\begin{thm}\label{gelfand-naimark}\cite{arveson:c_alg_invt}
Let $A$ be a commutative $C^*$-algebra and let $\mathcal{X}$ be the spectrum of A. There is the natural $*$-isomorphism $\gamma:A \to C_0(\mathcal{X})$.
\end{thm}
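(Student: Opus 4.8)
The plan is to realize $\gamma$ as the \emph{Gelfand transform} and then verify it is an isometric $*$-isomorphism. Recall that the spectrum $\mathcal{X}$ is the set of nonzero $*$-homomorphisms (characters) $\varphi\colon A\to\mathbb{C}$, topologized by the weak-$*$ topology inherited from the dual $A^*$, and set $\gamma(a)=\hat a$ where $\hat a(\varphi)=\varphi(a)$; this assignment is manifestly canonical (functorial in $A$), which is the sense in which it is ``natural''. The first step is topological: one shows $\mathcal{X}$ is locally compact Hausdorff. If $A$ is unital this is immediate, since $\mathcal{X}\cup\{0\}$ is a weak-$*$ closed subset of the closed unit ball of $A^*$, hence compact by Banach--Alaoglu, and $\mathcal{X}$ is open in it. In the non-unital case one passes to the unitization $\widetilde A=A\oplus\mathbb{C}1$, whose spectrum is the one-point compactification of $\mathcal{X}$; thus $\mathcal{X}$ is locally compact Hausdorff and each $\hat a$ genuinely vanishes at infinity, so $\hat a\in C_0(\mathcal{X})$ and $\gamma$ lands in the right algebra.

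Next I would check that $\gamma$ is a $*$-homomorphism. Linearity and multiplicativity hold because every $\varphi$ is an algebra homomorphism. The only nontrivial algebraic point is that $\gamma$ intertwines the involutions, i.e.\ $\widehat{a^*}=\overline{\hat a}$; writing $a=a_1+ia_2$ with $a_1,a_2$ self-adjoint, this reduces to showing $\varphi(b)\in\mathbb{R}$ for self-adjoint $b$, which follows from the facts that $\varphi(b)$ lies in the spectrum of $b$ and that self-adjoint elements of a $C^*$-algebra have real spectrum.

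The heart of the argument is that $\gamma$ is isometric. For self-adjoint $a$, iterating the $C^*$-identity $\|c^*c\|=\|c\|^2$ gives $\|a^{2^n}\|=\|a\|^{2^n}$, so the spectral-radius formula $r(a)=\lim_n\|a^n\|^{1/n}$ yields $r(a)=\|a\|$; and since $A$ is commutative, the Gelfand theory of commutative Banach algebras identifies $\|\hat a\|_\infty=\sup_\varphi|\varphi(a)|$ with $r(a)$. For arbitrary $a$ one then computes $\|a\|^2=\|a^*a\|=r(a^*a)=\|\widehat{a^*a}\|_\infty=\||\hat a|^2\|_\infty=\|\hat a\|_\infty^2$. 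In particular $\gamma$ is injective with closed range. Finally, surjectivity follows from the locally compact Stone--Weierstrass theorem: $\gamma(A)$ is a $*$-subalgebra of $C_0(\mathcal{X})$ that separates the points of $\mathcal{X}$ and vanishes at no point (both directly from the definition of $\mathcal{X}$), hence is dense; being also closed, it is all of $C_0(\mathcal{X})$.

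I expect the main obstacle to be precisely the isometry step, since that is where the $C^*$-identity is indispensable: for a general commutative Banach algebra the Gelfand transform is neither isometric nor even injective, so the argument must genuinely exploit $\|c^*c\|=\|c\|^2$. A secondary technical point is the bookkeeping that reduces the non-unital statement to the unital one through the unitization, in particular checking that characters of $A$ correspond bijectively and homeomorphically to the characters of $\widetilde A$ other than the one killing $A$, and that this correspondence is exactly the passage to the one-point compactification.
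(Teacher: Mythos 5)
Your proposal is correct and is exactly the standard Gelfand-transform argument; the paper itself gives no proof, citing Arveson, and the cited proof proceeds along the same lines (characters with the weak-$*$ topology, reality of characters on self-adjoint elements, isometry via the $C^*$-identity and spectral radius, surjectivity by Stone--Weierstrass, with the unitization handling the non-unital case). Nothing is missing.
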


So any (noncommutative) $C^*$-algebra may be regarded as a generalized (noncommutative)  locally compact Hausdorff topological space. But $*$-homomorphisms are not good analogs of continuous maps, because there is no a $*$-homomorphism $\varphi$ such that $\varphi$ corresponds to a map from a non-compact topological space to a compact one. However there are infinitely listed covering projections $\widetilde{\mathcal{X}} \to \mathcal{X}$ such that $\widetilde{\mathcal{X}}$ (resp. $\mathcal{X}$) is non-compact (resp. compact). A good analog of a continuous map is a $C^*$-correspondence (See definition \ref{corr_defn}). Let us recall several well known facts.
\begin{defn}\cite{spanier:at}
A fibration $p: \mathcal{\widetilde{X}} \to \mathcal{X}$ with unique path lifting is said to be  {\it regular} if, given any closed path $\omega$ in $\mathcal{X}$, either every lifting of $\omega$ is closed or none is closed.
\end{defn}
\begin{defn}\label{cov_proj_cov_grp}\cite{spanier:at}
Let $p: \mathcal{\widetilde{X}} \to \mathcal{X}$ be a covering projection.  A self-equivalence is a homeomorphism $f:\mathcal{\widetilde{X}}\to\mathcal{\widetilde{X}}$ such that $p \circ f = p$). We denote this group by $G(\mathcal{\widetilde{X}}|\mathcal{X})$. This group is said to be the {\it group of covering transformations} of $p$.
\end{defn}
\begin{prop}\cite{spanier:at}
If $p: \mathcal{\widetilde{X}} \to \mathcal{X}$ is a regular covering projection and $\mathcal{\widetilde{X}}$ is connected and locally path connected, then $\mathcal{X}$ is homeomorphic to space of orbits of $G(\mathcal{\widetilde{X}}|\mathcal{X})$, i.e. $\mathcal{X} \approx \mathcal{\widetilde{X}}/G(\mathcal{\widetilde{X}}|\mathcal{X})$. So $p$ is a principal bundle.
\end{prop}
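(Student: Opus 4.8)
The plan is to show that the orbit map $q\colon \widetilde{\mathcal{X}} \to \widetilde{\mathcal{X}}/G$, with $G = G(\widetilde{\mathcal{X}}|\mathcal{X})$, induces a homeomorphism $\bar p\colon \widetilde{\mathcal{X}}/G \to \mathcal{X}$ for which $p = \bar p\circ q$. Because $p\circ g = p$ for every $g\in G$, the map $p$ is constant on $G$-orbits and hence descends to a continuous map $\bar p$; it is surjective since $p$ is. It is also open: the orbit map $q$ is open, as $q^{-1}(q(U)) = \bigcup_{g\in G} g(U)$ is open whenever $U$ is, and $p$ is open because it is a covering projection, so for open $V\subseteq\widetilde{\mathcal{X}}/G$ we get $\bar p(V) = p(q^{-1}(V))$ open. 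Everything therefore reduces to showing $\bar p$ is injective, equivalently that $G$ acts transitively on each fibre $p^{-1}(x_0)$.

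Establishing transitivity of the $G$-action on fibres is the crux, and the step I expect to be the main obstacle. Fix $\tilde x_1, \tilde x_2\in p^{-1}(x_0)$. Since $\widetilde{\mathcal{X}}$ is connected and locally path connected it is path connected, so the lifting criterion for covering projections is available: a continuous map $f\colon \widetilde{\mathcal{X}}\to\widetilde{\mathcal{X}}$ with $p\circ f = p$ and $f(\tilde x_1) = \tilde x_2$ exists precisely when $p_*(\pi_1(\widetilde{\mathcal{X}},\tilde x_1)) \subseteq p_*(\pi_1(\widetilde{\mathcal{X}},\tilde x_2))$ inside $\pi_1(\mathcal{X},x_0)$. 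Now a loop $\omega$ based at $x_0$ lifts to a closed path starting at $\tilde x_i$ if and only if its class lies in $p_*(\pi_1(\widetilde{\mathcal{X}},\tilde x_i))$, so the hypothesis that $p$ is regular (every lifting of $\omega$ is closed, or none is) forces these subgroups to coincide for all base points in $p^{-1}(x_0)$. Applying the lifting criterion in both directions yields $f,f'$ with $p\circ f = p\circ f' = p$, $f(\tilde x_1)=\tilde x_2$ and $f'(\tilde x_2)=\tilde x_1$; then $f'\circ f$ and $\mathrm{id}_{\widetilde{\mathcal{X}}}$ are lifts of $p$ agreeing at $\tilde x_1$, hence equal by uniqueness of lifts, and likewise $f\circ f' = \mathrm{id}$, so $f\in G$ with $f(\tilde x_1)=\tilde x_2$.

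Thus $\bar p$ is a continuous open bijection, hence a homeomorphism, giving $\mathcal{X}\approx\widetilde{\mathcal{X}}/G(\widetilde{\mathcal{X}}|\mathcal{X})$. For the final assertion that $p$ is a principal bundle, I would first note that $G$ acts freely: the fixed-point set of $g\in G$ is closed, and also open because a lift of $p$ that agrees with $\mathrm{id}$ at a point agrees with it on a neighbourhood; being open and closed in the connected space $\widetilde{\mathcal{X}}$ it is empty unless $g = \mathrm{id}$. Together with transitivity, $G$ acts simply transitively on each fibre. Finally, choosing an evenly covered open $U\subseteq\mathcal{X}$ with $p^{-1}(U)=\bigsqcup_i U_i$, the group $G$ permutes the sheets $U_i$ freely and transitively, which gives a $G$-equivariant homeomorphism $p^{-1}(U)\cong U\times G$ (with $G$ discrete); these local trivializations exhibit $p$ as a principal $G(\widetilde{\mathcal{X}}|\mathcal{X})$-bundle. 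The only non-formal ingredients are the lifting criterion and the translation of regularity into equality of the characteristic subgroups; the remaining work with quotient topologies and sheets is routine.
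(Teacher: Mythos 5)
The paper offers no proof of this proposition at all --- it is quoted from Spanier, so there is no in-paper argument to compare against; your proof is the standard one and it is correct. The key step, translating regularity into the equality of the subgroups $p_*\pi_1(\widetilde{\mathcal{X}},\tilde x_1)=p_*\pi_1(\widetilde{\mathcal{X}},\tilde x_2)$ for all points of a fibre and then invoking the lifting criterion together with uniqueness of lifts to get transitivity of $G(\widetilde{\mathcal{X}}|\mathcal{X})$ on fibres, is exactly the argument in the cited source, and your verification that $\bar p$ is a continuous open bijection is sound. The only detail worth tightening is the final local-trivialization step: choose the evenly covered set $U$ to be connected (possible because $\mathcal{X}$, as the image of the connected, locally path connected space $\widetilde{\mathcal{X}}$ under the local homeomorphism $p$, is locally path connected), since only then does each $g\in G$ genuinely permute the sheets $U_i$ of $p^{-1}(U)$ and the identification $p^{-1}(U)\approx U\times G$, with $G$ discrete and acting simply transitively on the sheets, go through as stated.
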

We would like generalize regular covering projections which are principal bundles.
However any principal bundle with a compact group corresponds to a Hopf-Galois extension (See \cite{hajac:toknotes}).
We may summarize several properties of the Gelfand - Na\u{i}mark correspondence with the
following dictionary.
\newline
\break
\begin{tabular}{|c|c|}
\hline
TOPOLOGY & ALGEBRA\\
\hline
Locally compact space & $C^*$ - algebra\\
Compact space & Unital $C^*$ - algebra\\
Continuous map & $C^*$-correspondence\\
Principal bundle with compact group  & Hopf-Galois extension \\
Infinite covering projection  & ? \\
\hline
\end{tabular}
\newline
\newline
\break
Above table contains all ingredients for construction of infinite covering projections
\begin{itemize}
\item Principal bundles,
\item Maps from non-compact spaces to compact ones.
\end{itemize}
However above table does not have  principal bundles with not-compact groups. We shall construct it with an application of von Neumann algebras.
\newline
This article assumes elementary knowledge of following subjects:
\begin{enumerate}
\item Set theory \cite{halmos:set}.
\item Category theory  \cite{spanier:at},
\item Algebraic topology  \cite{spanier:at},
\item $C^*$-algebras and operator theory \cite{pedersen:ca_aut}.
\end{enumerate}

The terms "set", "family" and "collection" are synonyms.
Following table contains used in this paper notations.
\newline
\begin{tabular}{|c|c|}
\hline
Symbol & Meaning\\
\hline
$A^+$  & Unitization of $C^*-$ algebra $A$\\
$A_+$  & A positive cone of $C^*-$ algebra $A$\\
$A''$ & Bicommutant of $C^*$ algebra $A$ \cite{pedersen:ca_aut}\\
$A^G$  & Algebra of $G$ invariants, i.e. $A^G = \{a\in A \ | \ ga=a, \forall g\in G\}$\\
$\hat A$ & Spectrum of  $C^*$ - algebra $A$  with the hull-kernel topology \\
 & (or Jacobson topology)\\
$\mathrm{Aut}(A)$ & Group * - automorphisms of $C^*$  algebra $A$\\
$B(H)$ & Algebra of bounded operators on Hilbert space $H$\\
$B_{\infty}=B_{\infty}(\{z\in \mathbb{C} \ | \ |z|=1\})$  & Algebra of Borel measured functions on the $\{z\in \mathbb{C} \ | \ |z|=1\}$ set. \\
$\mathbb{C}$ (resp. $\mathbb{R}$)  & Field of complex (resp. real) numbers \\
$\mathbb{C}^*$ & $\{z \in \mathbb{C} \ | \ |z| = 1\}$ \\
$C(\mathcal{X})$ & $C^*$ - algebra of continuous complex valued \\
 & functions on topological space $\mathcal{X}$\\
$C_0(\mathcal{X})$ & $C^*$ - algebra of continuous complex valued \\
 & functions on topological space $\mathcal{X}$\\
 $C_b(\mathcal{X})$ & $C^*$ - algebra of bounded  continuous complex valued \\
  & functions on topological space $\mathcal{X}$\\
$H$ &Hilbert space \\
$I = [0, 1] \subset \mathbb{R}$ & Closed unit  interval\\
$G_{tors} \subset G$  & The torsion subgroup of an abelian group\\
$K(A)$ & Pedersen ideal of $C^*$-algebra $A$\\
$\mathcal{K}(H)$ or $\mathcal{K}$ & Algebra of compact operators on Hilbert space $H$\\
$\mathbb{M}_n(A)$  & The $n \times n$ matrix algebra over $C^*-$ algebra $A$\\

$\mathrm{Map}(X, Y)$  & The set of maps from $X$ to $Y$\\
$M(A)$  & A multiplier algebra of $C^*$-algebra $A$\\
$M^s(A) = M(A \otimes \mathcal{K})$  & Stable multiplier algebra of $C^*-$ algebra $A$\\
$\mathbb{N}$ & Monoid of natural numbers \\
$Q(A)=M(A)/A$  & Outer multiplier algebra of $C^*-$ algebra $A$\\

$Q^s(A)=(M(A \otimes \mathcal{K}))/(A  \otimes \mathcal{K})$  & Stable outer multiplier algebra of $C^*-$ algebra $A$\\

$\mathbb{Q}$  & Field of rational numbers \\
  $\mathrm{sp}(a)$ & Spectrum of element of $C^*$-algebra $a\in A$  \\
  $\mathrm{supp}(f)$ & Support of $f\in C_0(\mathcal{X})$, $\mathrm{supp}(f) = \left\{x \in \mathcal{X} \ | \ f(x)\neq 0 \right\}$   \\
$U(H) \subset \mathcal{B}(H) $ & Group of unitary operators on Hilbert space $H$\\
$U(A) \subset A $ & Group of unitary operators of algebra $A$\\

$\mathbb{Z}$ & Ring of integers \\

$\mathbb{Z}_m$ & Ring of integers modulo $m$ \\
$\Omega$ &  Natural contravariant functor from category  of commutative \\ & $C^*$ - algebras, to category of Hausdorff spaces\\

\hline
\end{tabular}

\section{Prototype. Hopf-Galois extensions}\label{hopf-galois_extensions_section}
The  Hopf-Galois theory supplies a good noncommutative generalization of finite covering projections. Let us recall some notions of Hopf-Galois theory. Following subsection is in fact a citation of \cite{morita_hopf_galois}.
\subsection{Coaction of Hopf algebras}\label{hopf-galois_coactions_section}
\begin{defn}
\label{cat_equivalence_definition}
 An \emph{equivalence of categories} $\mathbf{A}$ and $\mathbf{B}$ is a pair $(F, G)$ of functors ($F: \mathbf{A} \rightarrow \mathbf{B}$, $G: \mathbf{B} \rightarrow \mathbf{A}$) and a pair of natural isomorphisms
 \begin{equation}\nonumber
\alpha : 1_{\mathbf{A}} \to GF,\quad \beta: 1_{\mathbf{B}}\to FG.
 \end{equation}
\end{defn}
Let $ H$ be a Hopf algebra over the commutative ring $\mathbb{C}$, with
bijective antipode $S$. We use the Sweedler notation \cite{karaali:ha} for the comultiplication
on $H: \Delta (h) = h_{(1)} \otimes h_{(2)}$. $\mathcal{M}^H$ (respectively $^H\mathcal{M}$) is the category of right
(respectively left) $H$-comodules. For a right $H$-coaction $\rho$ (respectively a
left $H$-coaction $\lambda$) on a $\mathbb{C}$ -module $M$, we denote
\begin{equation}\nonumber
\rho(m) = m_{[0]}\otimes m_{[1]} ; \  \lambda(m) = m_{[−1]} \otimes m_{[0]}.
\end{equation}
The submodule of coinvariants $M^{\mathrm{co}H}$ of a right (respectively left) $H$-comodule $M$
consists of the elements $m \in M$ satisfying
\begin{equation}\label{right_inv}
\rho(m) = m\otimes 1
\end{equation}
 respectively
\begin{equation}\label{left_inv}
\lambda(m) = 1 \otimes m.
\end{equation}

\begin{defn}\cite{morita_hopf_galois}
Let $A$ be associative algebra and $A\in \mathcal{M}^H$. Algebra $A$ is said to be $H${\it-comodule algebra} if $H$ - coaction $\rho: A \rightarrow A \otimes H$ satisfies following conditions:

\begin{equation}\label{h_action_homo}
\rho(ab) = a_{[0]}b_{[0]} \otimes a_{[1]}b_{[1]}; \ \forall a, b \in A;
\end{equation}
\begin{equation}\label{h_coaction_homo}
a \otimes \Delta(h) = \rho(a) \otimes h.
\end{equation}

\end{defn}
Let $A$ be a right $H$-comodule algebra. $_A\mathcal{M}^H$ and $\mathcal{M}^H_A$
are the categories of
left and right Hopf modules. We have two pairs of adjoint functors
($F_1 = A \otimes_{A^{\mathrm{co}H}} -, G_1 = (-)^{\mathrm{co}H}$) and $(F_2 = − \otimes_{A^{\mathrm{co}H}} A, G_2 = (-)^{\mathrm{co}H}$)
between the categories $_{A^{\mathrm{co}H}}M$ and $_A\mathcal{M}^H$, and between $\mathcal{M}_{A^{\mathrm{co}H}}$ and $\mathcal{M}^H_A$.
The unit and counit of the adjunction $(F_1,G_1)$are given by the formulas
\begin{equation}\nonumber
\eta_{1,N} : N \rightarrow (A \otimes_{A^{\mathrm{co}H}} N)^{\mathrm{co}H}, \ \eta_{1,N}(n) = 1 \otimes  n;
\end{equation}
\begin{equation}\nonumber
\varepsilon_{1,M} : A \otimes_{A^{\mathrm{co}H}} M^{\mathrm{co}H} \rightarrow M, \ \varepsilon_{1,M}(a\otimes m) = am.
\end{equation}

The formulas for the unit and counit of $(F_2,G_2)$ are similar. Consider the
canonical maps
\begin{equation}\label{can_def}
\mathrm{can} : A \otimes_{A^{\mathrm{co}H}} A \rightarrow A \otimes H, \ \mathrm{can}(a \otimes b) = ab_{[0]} \otimes b_{[1]};
\end{equation}\label{can'_def}
\begin{equation}\label{canp_def}
\mathrm{can}' : A \otimes_{A^{\mathrm{co}H}} A\rightarrow A\otimes H, \ \mathrm{can}'(a \otimes b) = a_{[0]}b \otimes a_{[1]}.
\end{equation}

\begin{thm}\label{hoph_galois_def_thm}\cite{morita_hopf_galois}
Let $A$ be a right $H$-comodule algebra. Consider the following
statements:
\begin{enumerate}
\item $(F_2,G_2)$ is a pair of inverse equivalences;
\item  $(F_2,G_2)$ is a pair of inverse equivalences and  $A \in _{A^{\mathrm{co}H}} \mathcal{M}$ is flat;
\item  $\mathrm{can}$ is an isomorphism and $A \in_{A^{\mathrm{co}H}} \mathcal{M}$ is faithfully flat;
\item  $(F_1,G_1)$ is a pair of inverse equivalences;
\item $(F_1,G_1)$ is a pair of inverse equivalences and  $A \in\mathcal{M}_{A^{\mathrm{co}H}}$ is flat;
\item $\mathrm{can}'$ is an isomorphism and $A\in \mathcal{M}_{A^{\mathrm{co}H}}$ is faithfully flat.
\end{enumerate}
These the six conditions are equivalent.
\end{thm}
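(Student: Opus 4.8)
This is a form of the structure theorem for Hopf--Galois extensions (Schneider's faithfully flat descent theorem), and I would organise the proof along its classical lines. Put $B:=A^{\mathrm{co}H}$. The six conditions split into a ``right'' block $(1),(2),(3)$ --- involving $F_2$, $G_2$ and $\mathrm{can}$ --- and a ``left'' block $(4),(5),(6)$ --- involving $F_1$, $G_1$ and $\mathrm{can}'$ --- and the two blocks are mirror images of one another under the standard left--right reversal of the data, which is available precisely because $S$ is bijective (it interchanges $\mathcal{M}_B\leftrightarrow{}_B\mathcal{M}$, $(F_2,G_2)\leftrightarrow(F_1,G_1)$, $\mathrm{can}\leftrightarrow\mathrm{can}'$). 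Hence it is enough to prove $(1)\Leftrightarrow(2)\Leftrightarrow(3)$ for an arbitrary right $H$-comodule algebra, together with one bridge between the blocks, say $(3)\Rightarrow(6)$.

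For the right block I would run the cycle $(2)\Rightarrow(1)\Rightarrow(3)\Rightarrow(2)$. The implication $(2)\Rightarrow(1)$ is trivial (forget the flatness clause). For $(1)\Rightarrow(3)$: an equivalence between the abelian categories $\mathcal{M}_B$ and $\mathcal{M}^H_A$ is additive and exact, so $F_2=-\otimes_B A$ is exact, i.e.\ $A$ is flat as a (left) $B$-module; and an equivalence is faithful, so $M\otimes_B A=0$ gives $F_2(M)=0$, hence $\mathrm{id}_M=0$, hence $M=0$, so $A$ is faithfully flat. To see that $\mathrm{can}$ is an isomorphism, view it as a morphism $A\otimes_B A\to A\otimes H$ in $\mathcal{M}^H_A$ (give $A\otimes H$ the coaction $\mathrm{id}\otimes\Delta$ and the codiagonal $A$-action) and apply the equivalence $G_2=(-)^{\mathrm{co}H}$: a short computation of coinvariants, using the flatness just obtained, identifies $G_2(\mathrm{can})$ with $\mathrm{id}_A$, and a fully faithful functor reflects isomorphisms, so $\mathrm{can}$ is one.

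The substantive step is $(3)\Rightarrow(2)$. Flatness is immediate from faithful flatness, so the point is to produce the equivalence $(F_2,G_2)$ from $\mathrm{can}$ bijective and $A$ faithfully flat. Bijectivity of $\mathrm{can}$ gives the translation map $\tau(h)=\mathrm{can}^{-1}(1\otimes h)=:h^{[1]}\otimes h^{[2]}$ with its usual identities; using it one shows that the counit $\varepsilon_{2,M}\colon M^{\mathrm{co}H}\otimes_B A\to M$ is bijective for every $M\in\mathcal{M}^H_A$ --- it suffices to test this after applying the faithfully flat functor $-\otimes_B A$, whereupon $\mathrm{can}$ trivialises the $H$-coaction on $M\otimes_B A$ and the claim reduces to a relative form of the fundamental theorem of Hopf modules --- while the unit $\eta_{2,N}\colon N\to(N\otimes_B A)^{\mathrm{co}H}$ is bijective for every $N\in\mathcal{M}_B$ because, after transporting the coaction on $N\otimes_B A$ through $\mathrm{can}^{-1}$, it is exactly the descent isomorphism of the faithfully flat extension $B\hookrightarrow A$, i.e.\ the exactness of the Amitsur complex $N\to N\otimes_B A\rightrightarrows N\otimes_B A\otimes_B A$. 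This closes the right block, and the mirror argument gives $(4)\Leftrightarrow(5)\Leftrightarrow(6)$. For the bridge $(3)\Rightarrow(6)$ one uses the bijective antipode twice: first to convert $\mathrm{can}^{-1}$ into an inverse of $\mathrm{can}'$ by an explicit formula in $\tau$ and $S^{-1}$; and second --- using that $\mathrm{can}$ makes $A\otimes_B A$ free as a left $A$-module, together with a faithfully flat descent argument --- to upgrade left faithful flatness of $A$ over $B$ to right faithful flatness.

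The main obstacle is the descent implication $(3)\Rightarrow(2)$ (equivalently $(6)\Rightarrow(5)$): every other implication is formal bookkeeping with adjunctions, exactness and the antipode, but deducing an equivalence of module categories from bijectivity of the canonical map together with faithful flatness is genuine faithfully flat descent, and it is here --- together with the side-swapping in the bridge --- that both the faithful flatness hypothesis and the bijectivity of $S$ are really used. For the detailed computations I would follow Schneider's theorem as presented in \cite{morita_hopf_galois}.
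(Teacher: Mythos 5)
The paper offers no proof of this theorem: it is stated as a quotation from \cite{morita_hopf_galois} (where it is essentially Schneider's structure theorem for faithfully flat Hopf--Galois extensions), so there is no in-paper argument to compare against. Your sketch follows exactly the classical route used in that literature --- reduce to one block, prove $(3)\Rightarrow(2)$ by faithfully flat descent plus the relative fundamental theorem of Hopf modules, and pass to the other block via the bijective antipode --- and at the level of a sketch it is sound; the one step I would streamline is $(1)\Rightarrow(3)$, where $\mathrm{can}$ is precisely the adjunction counit $\varepsilon_{2,M}$ at $M=A\otimes H$ (with the codiagonal $A$-action and coaction $\mathrm{id}\otimes\Delta$, so that $M^{\mathrm{co}H}\cong A$), whence its bijectivity follows at once from the equivalence without the coinvariant computation you invoke.
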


\begin{defn}\label{hoph_galois_def}
If conditions of  theorem \ref{hopf-galois_coactions_section} are hold, then $A$ is said to be {\it left
faithfully flat $H$-Galois extension}.
\end{defn}

It is well-known  that  $\mathrm{can}$ is an isomorphism if and
only if $\mathrm{can}'$ is an isomorphism.

\subsection{Action of finite group}

Let $G$ be a finite group. A set $H = \mathrm{Map}(G, \mathbb{C})$ has a natural structure of commutative Hopf algebra
(See \cite{hajac:toknotes}). Addition (resp. multiplication) on $H$ is pointwise addition (resp. pointwise
multiplication). Let $\delta_g\in H, ( g \in G)$ be such that
\begin{equation}\label{group_hopf_action_rel}
\delta_g(g')\left\{
\begin{array}{c l}
    1 & g'=g\\
   0 & g' \ne g
\end{array}\right.
\end{equation}

Comultiplication  $\Delta: H \rightarrow H \otimes H$ is induced by group multiplication
\begin{equation}\nonumber
\Delta f(g) = \sum_{g_1 g_2 = g} f(g_1) \otimes f(g_2); \ \forall f \in \mathrm{Map}(G, \mathbb{C}), \ \forall g\in G.
\end{equation}
i.e.
\begin{equation}\nonumber
\Delta \delta_g = \sum_{g_1 g_2 = g} \delta_{g_1} \otimes \delta_{g_2}; \  \forall g\in G.
\end{equation}

Action $G \times A \rightarrow A, \  (g, a) \mapsto ga$  naturally induces coaction $A\rightarrow A \otimes H$ ($H =   \mathrm{Map}(G, \mathbb{C})$) .
\begin{equation}\label{gh_action}
a \mapsto \sum_{g \in G} ga \otimes \delta_g
\end{equation}
Equations (\ref{h_action_homo}), (\ref{h_coaction_homo}) are equivalent to  following conditions of group action
\begin{equation}\nonumber
g(a_1a_2) = (ga_1)(ga_2), \ \forall g \in G, \ a_1, a_2 \in A,
\end{equation}
\begin{equation}\nonumber
(g_1g_2)a = g_1(g_2a) , \ \forall g_1, g_2 \in G, \ a \in A.
\end{equation}
Any element $x \in A\otimes H$ can be represented as following sum
\begin{equation}\nonumber
x = \left(\sum_{g\in G} a_g \otimes \delta_g \right).
\end{equation}

Let $a \in A$ be such that $ga= a, \ \forall g\in G$ then
\begin{equation}\label{g_h_inv_equivalence}
a \mapsto \sum_{g\in G} a \otimes \delta_g = a \otimes 1.
\end{equation}

From (\ref{g_h_inv_equivalence}) it follows that $A^{\mathrm{co}H}=A^G$, where $A^G=\{a \in A: ga= a; \ \forall g\in G \}$ is an algebra of invariants. There is a bijective natural map
\begin{equation}\label{map_tensor_bijection}
A\otimes H \stackrel{\approx}{\longrightarrow} \mathrm{Map}(G, A)
\end{equation}
\begin{equation}\nonumber
\sum_{g \in G} a_g \otimes \delta_g \mapsto \left(g \mapsto a_g\right).
\end{equation}

From (\ref{g_h_inv_equivalence}) it follows that (\ref{can_def})  can be represented in terms of group action by following way

\begin{equation}\label{can_g_def}
\mathrm{can}\left(\sum_{i=1,..., n} a_i \otimes b_i \right) = \sum_{\substack{i=1,..., n\\
g\in G}} a_i(gb_i) \otimes \delta_g.
\end{equation}
There is the unique map  $\mathrm{can}^G: A \otimes_{A^G} A \rightarrow\mathrm{Map}(G, A)$
\begin{equation}\label{canonical_map_defn_eqnhp}
\sum_{i =1, ..., n} a_i\otimes b_i \mapsto (g \mapsto \sum_{i =1, ..., n}a_i (gb_i)), \ \ (a_i,b_i \in B, \ \forall g\in G)
\end{equation}
From bijection of (\ref{map_tensor_bijection}) it follows that  $\mathrm{can}$ is bijective if and only is  $\mathrm{can}^G$ is bijective, i.e.
\begin{equation}\label{finite_g_bijection_equiv}
A \otimes_{A^G} A \approx \mathrm{Map}(G, A).
\end{equation}

Following lemma is an analogue of result described in \cite{miyashita_fin_outer_gal}.
\begin{lem}\label{cond_4_equiv_lem}

Let $A$ be an unital algebra. Suppose that finite group $G$ acts on $A$.  Then following statements:
\begin{enumerate}
\item $\mathrm{can}_G: A\otimes_{A^G} A \rightarrow \mathrm{Map}(G, A)$ defined by (\ref{canonical_map_defn_eqnhp}) is bijection;
\item There are elements  $b_i, a_i \in A$ ($i=1,...,n$) such that
\begin{equation}\label{cond_4_equiv_lem_eq1}
\sum_{i =1, ..., n} a_ib_i = 1_A,
\end{equation}
\begin{equation}\label{cond_4_equiv_lem_eq2}
\sum_{i =1, ..., n} a_i(gb_i) = 0 \ \forall g\in G \ (g \ \mathrm{is \ nontrivial});
\end{equation}
\end{enumerate}
are equivalent.
\end{lem}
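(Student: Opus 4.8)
The plan is to establish the two implications of the lemma separately. The implication $(1)\Rightarrow(2)$ is essentially immediate: assuming $\mathrm{can}_G$ is bijective, in particular it is onto, so the function $\delta\in\mathrm{Map}(G,A)$ with $\delta(e)=1_A$ and $\delta(g)=0$ for every nontrivial $g$ lies in its image. Picking any $\sum_{i=1}^n a_i\otimes b_i\in A\otimes_{A^G}A$ with $\mathrm{can}_G\!\left(\sum_i a_i\otimes b_i\right)=\delta$ and unwinding the definition (\ref{canonical_map_defn_eqnhp}) gives $\sum_i a_i(gb_i)=\delta(g)$ for all $g\in G$; the value at $g=e$ is exactly (\ref{cond_4_equiv_lem_eq1}) and the values at nontrivial $g$ are (\ref{cond_4_equiv_lem_eq2}).

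For $(2)\Rightarrow(1)$ I would build an explicit two-sided inverse of $\mathrm{can}_G$. First, since $G$ acts by algebra automorphisms, applying an automorphism $h$ to (\ref{cond_4_equiv_lem_eq1})--(\ref{cond_4_equiv_lem_eq2}) and reindexing the group variable upgrades those two relations to the single family of ``orthogonality'' identities
\begin{equation}\nonumber
\sum_{i=1}^{n}(ga_i)(g'b_i)=
\begin{cases}
1_A, & g=g',\\
0, & g\neq g',
\end{cases}
\qquad g,g'\in G,
\end{equation}
and in particular (taking $g'=e$) $\sum_i (ga_i)b_i=0$ for $g\neq e$ and $\sum_i a_ib_i=1_A$. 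I would then define $\nu:\mathrm{Map}(G,A)\to A\otimes_{A^G}A$ by $\nu(f)=\sum_{g\in G}\sum_{i=1}^{n}f(g)(ga_i)\otimes b_i$. A short computation with the displayed identities shows $\mathrm{can}_G(\nu(f))=f$, so $\mathrm{can}_G$ is surjective.

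To get injectivity I would check $\nu\circ\mathrm{can}_G=\mathrm{id}$. For $x=\sum_j u_j\otimes v_j$ one has $\mathrm{can}_G(x)(g)=\sum_j u_j(gv_j)$, whence, using that each $g$ is an algebra homomorphism,
\begin{equation}\nonumber
\nu(\mathrm{can}_G(x))=\sum_{i,j}u_j\Bigl(\sum_{g\in G}g(v_ja_i)\Bigr)\otimes b_i .
\end{equation}
The element $t_{ij}=\sum_{g\in G}g(v_ja_i)$ is $G$-invariant, so it may be transported across the balanced tensor product $\otimes_{A^G}$, and a second use of the orthogonality identities collapses $\sum_i t_{ij}b_i$ to $v_j$; hence $\nu(\mathrm{can}_G(x))=\sum_j u_j\otimes v_j=x$. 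Combining the two halves, $\nu$ is a two-sided inverse of $\mathrm{can}_G$, which is therefore bijective.

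I expect the only delicate point to be this last step: one must verify that the averaged elements $t_{ij}$ really lie in $A^G$ --- which is exactly where the hypothesis that $G$ acts by automorphisms (not merely linearly) is used --- so that moving them across $\otimes_{A^G}$ is legitimate; everything else is routine bookkeeping with the Kronecker-delta identities. The argument is the standard one from finite outer Galois theory, cf.\ \cite{miyashita_fin_outer_gal}.
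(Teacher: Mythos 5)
Your proof is correct, and in the second half it goes further than the paper's own argument. The implication $(1)\Rightarrow(2)$ is identical to the paper's: both of you pull back the Kronecker-delta function at the identity through $\mathrm{can}_G$. For $(2)\Rightarrow(1)$ the paper merely exhibits a preimage of an arbitrary $f\in\mathrm{Map}(G,A)$ (its displayed element, essentially $\sum_{g\in G}\sum_{i=1}^{n}f(g)\,a_i\otimes g^{-1}b_i$, though the summation indices are garbled there) and concludes only that $\mathrm{can}_G$ is \emph{onto}; injectivity, which the statement of the lemma requires, is never addressed. You instead build the explicit map $\nu(f)=\sum_{g\in G}\sum_{i}f(g)(ga_i)\otimes b_i$ and verify both composites: the identity $\mathrm{can}_G\circ\nu=\mathrm{id}$ recovers the paper's surjectivity (your preimage formula differs slightly from the paper's, but both work thanks to the automorphism-upgraded orthogonality relations $\sum_i(ga_i)(g'b_i)=\delta_{g,g'}1_A$), while the identity $\nu\circ\mathrm{can}_G=\mathrm{id}$ — where you average $t_{ij}=\sum_{g}g(v_ja_i)$, observe $t_{ij}\in A^G$, slide it across the balanced tensor product $\otimes_{A^G}$, and collapse $\sum_i t_{ij}b_i=v_j$ — is exactly the missing injectivity argument, and your use of $G$-invariance there is legitimate and is the standard device from finite outer Galois theory. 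So your route buys a complete proof of bijectivity, whereas the paper's proof as written establishes only half of statement (1).
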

\begin{proof}
\begin{enumerate}
\item $=>$ Denote by $e\in G$ unity of $G$. Let $f\in \mathrm{Map}(G, A)$ be such that
\begin{equation}\nonumber
f(e)=1_A;
\end{equation}
\begin{equation}\nonumber
f(g)=0; \ (g\ne e).
\end{equation}
From bijection $A\otimes_{A^G} A \approx \mathrm{Map}(G, A)$ it follows that there are elements $a_1, ..., a_n, b_1, ..., b_n \in A$ such that $\sum_{i =1, ..., n} a_i\otimes b_i$ corresponds to $f$ i.e.
\begin{equation}\nonumber
f(g) = \sum_{i =1, ..., n} a_i(gb_i).
\end{equation}
It is clear that elements $a_1, ..., a_n, b_1, ..., b_n$ satisfy conditions (\ref{cond_4_equiv_lem_eq1}), (\ref{cond_4_equiv_lem_eq2})
\item $<=$  Let us enumerate elements of $G$, i.e $G = \{g_1, ..., g_{|G|}\}$. $a_1, ..., a_n, b_1, ..., b_n$ satisfy conditions (\ref{cond_4_equiv_lem_eq1}), (\ref{cond_4_equiv_lem_eq2}), and let be $f\in \mathrm{Map}(G, A)$ be any map from $G$ to $A$; and $x \in A\otimes_{A^G} A$ is such that
\begin{equation}\nonumber
x=\sum_{i=1,...,|G|}f(g)a_i\otimes g^{-1}b_i.
\end{equation}
From (\ref{cond_4_equiv_lem_eq1}), (\ref{cond_4_equiv_lem_eq2}) it follows that $f = \mathrm{can}_G(x)$ So $\mathrm{can}_G$ is map onto.
\end{enumerate}
\end{proof}

\begin{defn}\label{g_galois_defn} Let $G$ be a finite group. Suppose that $ H = \mathrm{Map}(G,\mathbb{C})$ and $H$ has a natural structure of Hopf algebra. Any $H$-Galois extension $A \rightarrow B$ is said to be $G$-{\it Galois
extension}.
\end{defn}
\subsection{Resume}
Above results shows that a good generalization of noncommutative covering projections requires following ingredients:
\begin{itemize}
\item Analog of  $\left(F_1 = A \otimes_{A^{\mathrm{co}H}} -, G_1 = \left(-\right)^{\mathrm{co}H}\right)$,
\item Analog of definition \ref{hoph_galois_def},
\item Constructive method for checking conditions of definitions like lemma \ref{cond_4_equiv_lem},
\item Examination of the theory for the commutative case,
\item Nontrivial noncommutative examples.
\end{itemize}

\section{Hermitian modules and functors}
In this section we consider an analogue of the $A \otimes_B - : \ _B\mathcal{M}\to _A\mathcal{M}$ functor or an algebraic generalization of continuous maps. Following text is in fact a citation of \cite{rieffel_morita}.

 \begin{defn}
\cite{rieffel_morita} Let $B$ be a $C^*$-algebra. By a (left) {\it Hermitian $B$-module} we will mean the Hilbert space $H$ of a non-degenerate *-representation $A \rightarrow B(H)$. Denote by $\mathbf{Herm}(B)$ the category of Hermitian $B$-modules.
 \end{defn}
\begin{empt}
\cite{rieffel_morita} Let $A$, $B$ be $C^*$-algebras. In this section we will study some general methods for construction of functors from  $\mathbf{Herm}(B)$ to  $\mathbf{Herm}(A)$.
\end{empt}
\begin{defn} \cite{rieffel_morita}
Let $B$ be a $C^*$-algebra. By (right) {\it pre-$B$-rigged space} we mean a vector space, $X$, over complex numbers on which $B$ acts by means of linear transformations in such a way that $X$ is a right $B$-module (in algebraic sense), and on which there is defined a $B$-valued sesquilinear form $\langle,\rangle_X$ conjugate linear in the first variable, such that
\begin{enumerate}
\item $\langle x, x \rangle_B \ge 0$
\item $\left(\langle x, y \rangle_X\right)^* = \langle y, x \rangle_X$
\item $\langle x, yb \rangle_B = \langle x, y \rangle_Xb$
\end{enumerate}
\end{defn}
\begin{empt}
It is easily seen that if we factor a pre-$B$-rigged space by subspace of the elements $x$ for which $\langle x, x \rangle_B = 0$, the quotient becomes in a natural way a pre-$B$-rigged space having the additional property that inner product is definite, i.e. $\langle x, x \rangle_X > 0$ for any non-zero $x\in X$. On a pre-$B$-rigged space with definite inner product we can define a norm $\|\|$ by setting
\begin{equation}\label{rigged_norm_eqn}
\|x\|=\|\langle x, x \rangle_X\|^{1/2}, \
\end{equation}
From now on we will always view a  pre-$B$-rigged space  with definite inner product as being equipped with this norm. The completion of $X$ with this norm is easily seen to become again a pre-$B$-rigged space.
\end{empt}
\begin{defn}
\cite{rieffel_morita} Let $B$ be a $C^*$-algebra. By a {\it $B$-rigged space} or {\it Hilbert $B$-module} we will mean a pre-$B$-rigged space, $X$, satisfying the following conditions:
\begin{enumerate}
\item If $\langle x, x \rangle_X\ = 0$ then $x = 0$, for all $x \in X$,
\item $X$ is complete for the norm defined in (\ref{rigged_norm_eqn}).
\end{enumerate}
\end{defn}
\begin{rem}
In many publications the "Hilbert $B$-module" term is used instead "rigged $B$-module".
\end{rem}
\begin{empt}
Viewing a $B$-rigged space as a generalization of an ordinary Hilbert space, we can define what we mean by bounded operators on a $B$-rigged space.
\end{empt}
\begin{exm}\label{fin_rigged_exm}
Let $A$ be a $C^*$-algebra and a finite group acts on $A$. Then $A$ is a $A^G$-rigged space on which is defined following $A^G$ valued form
 \begin{equation}\label{inv_scalar_product}
 \langle x, y \rangle_A = \frac{1}{|G|} \sum_{g \in G} g(x^*y).
 \end{equation}
 Since given by \ref{inv_scalar_product} sum is $G$-invariant we have $ \langle x, y \rangle_A \in A^G$.
\end{exm}

\begin{defn}\cite{rieffel_morita}
Let $X$ be a $B$-rigged space. By a {\it bounded operator} on $X$ we mean a linear operator, $T$, from $X$ to itself which satisfies following conditions:
\begin{enumerate}
\item for some constant $k_T$ we have
\begin{equation}\nonumber
\langle Tx, Tx \rangle_X \le k_T \langle x, x \rangle_X, \ \forall x\in X,
\end{equation}
or, equivalently $T$ is continuous with respect to the norm of $X$.
\item there is a continuous linear operator, $T^*$, on $X$ such that
\begin{equation}\nonumber
\langle Tx, y \rangle_X = \langle x, T^*y \rangle_X, \ \forall x, y\in X.
\end{equation}
\end{enumerate}
It is easily seen that any bounded operator on a $B$-rigged space will automatically commute with the action of $B$ on $X$ (because it has an adjoint). We will denote by $\mathcal{L}(X)$ (or $\mathcal{L}_B(X)$ there is a chance of confusion) the set of all bounded operators on $X$. Then it is easily verified than with the operator norm $\mathcal{L}(X)$ is a $C^*$-algebra.
\end{defn}
\begin{defn}\cite{pedersen:ca_aut} If $X$ is a $B$-rigged module then
denote by $\theta_{\xi, \zeta} \in \mathcal{L}_B(X)$   such that
\begin{equation}\nonumber
\theta_{\xi, \zeta} (\eta) = \zeta \langle\xi, \eta \rangle_X , \ (\xi, \eta, \zeta \in X)
\end{equation}
Norm closure of  a generated by such endomorphisms ideal is said to be the {\it algebra of compact operators} which we denote by $\mathcal{K}(X)$. The $\mathcal{K}(X)$ is an ideal of  $\mathcal{L}_B(X)$. Also we shall use following notation $\xi\rangle \langle \zeta \stackrel{\text{def}}{=} \theta_{\xi, \zeta}$.
\end{defn}

\begin{defn}\cite{rieffel_morita}\label{corr_defn}
Let $A$ and $B$ be $C^*$-algebras. By a {\it Hermitian $B$-rigged $A$-module} we mean a $B$-rigged space, which is a left $A$-module by means of *-homomorphism of $A$ into $\mathcal{L}_B(X)$.
\end{defn}
\begin{rem}
Hermitian $B$-rigged $A$-modules are also named  as {\it $B$-$A$-correspondences} (See, for example \cite{kakariadis:corr}).
\end{rem}
\begin{empt}\label{herm_functor_defn}
Let $X$ be a Hermitian $B$-rigged $A$-module. If $V\in \mathbf{Herm}(B)$ then we can form the algebraic tensor product $X \otimes_{B_{\mathrm{alg}}} V$, and equip it with an ordinary pre-inner-product which is defined on elementary tensors by
\begin{equation}\nonumber
\langle x \otimes v, x' \otimes v' \rangle = \langle \langle x',x \rangle_B v, v' \rangle_V.
\end{equation}
Completing the quotient $X \otimes_{B_{\mathrm{alg}}} V$ by subspace of vectors of length zero, we obtain an ordinary Hilbert space, on which $A$ acts (by $a(x \otimes v)=ax\otimes v$) to give a  *-representation of $A$. We will denote the corresponding Hermitian module by $X \otimes_{B} V$. The above construction defines a functor $X \otimes_{B} -: \mathbf{Herm}(B)\to \mathbf{Herm}(A)$ if for $V,W \in \mathbf{Herm}(B)$ and $f\in \mathrm{Hom}_B(V,W)$ we define $f\otimes X \in \mathrm{Hom}_A(V\otimes X, W\otimes X)$ on elementary tensors by $(f \otimes X)(x \otimes v)=x \otimes f(v)$.	We can define action of $B$ on $V\otimes X$ which is defined on elementary tensors by
\begin{equation}\nonumber
b(x \otimes v)= (x \otimes bv) = x b \otimes v.
\end{equation}
\end{empt}
\section{Strong and/or weak completion}
In this section we follow to \cite{pedersen:ca_aut}.
\begin{defn}\cite{pedersen:ca_aut}
Let $A$ be a $C^*$-algebra.
The {\it strict topology} on $M(A)$ is the topology generated by seminorms $\vertiii{x}_a = \|ax\| + \|xa\|$, ($a\in A$). If $x \in M(A)$  and sequence of partial sums $\sum_{i=1}^{n}a_i$ ($n = 1,2, ...$), ($a_i \in A$) tends to $x$ in strict topology then we shall write
\begin{equation}
x = \sum_{i=1}^{\infty}a_i.
\end{equation}
\end{defn}

\begin{defn}\cite{pedersen:ca_aut}
Let $B \in B(H)$ be a $C^*$-algebra. Denote by $B''$ the strong closure of $B$ in $B(H)$. $B''$ is an unital weakly closed $C^*$-algebra and if $B$ acts non-degenerately on $H$ then  $B''$ is the {\it bicommutant} of $B$. Any strongly (=weakly) closed algebra is said to be a {\it von Neumann algebra}.
\end{defn}
\begin{defn}\cite{pedersen:ca_aut}
For any $x\in B(H)$ element $|x| \stackrel{\text{def}}{=} (xx^*)^{1/2}$ is said to be the {\it absolute value of} $x$.
\end{defn}
\begin{empt}\cite{pedersen:ca_aut}
For each $x\in B(H)$ we define the {\it range projection} of $x$ (denoted by $[x]$) as projection on closure of $xH$. If $x\ge 0$ then the sequence $\left(\left((1 /n) +x\right)^{-1}x\right)$ is monotone increasing to $[x]$.  If $p$ and $q$ are projections then $p \vee q = [p + q]$ and thus $p \wedge q = 1 - \left[2 - \left(p+q\right)\right]$. Similarly we have $p \setminus q = p - p\wedge q$. Since $[x]H$ is the orthogonal complement of the null space of $x^*$ we have $[x]=[xx^*]$. If $\mathcal{M}$ is a von Neumann algebra in $B(H)$ then $[x]\in \mathcal{M}$ for any $x\in \mathcal{M}$. We next prove a {\it polar decomposition}.
\end{empt}

\begin{prop}\cite{pedersen:ca_aut}
For each element $x$ in   a von Neumann algebra $\mathcal{M}$ there is a unique partial isometry $u\in \mathcal{M}$ with $uu^*=[|x|]$ and  $x=|x|u$.
\end{prop}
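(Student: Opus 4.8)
The plan is to lift the elementary Hilbert-space polar decomposition into $\mathcal{M}$ by means of the bicommutant theorem. \emph{Construction of $u$.} First I would record the identity $\langle |x|\xi, |x|\eta\rangle = \langle |x|^2\xi,\eta\rangle = \langle xx^*\xi,\eta\rangle = \langle x^*\xi, x^*\eta\rangle$ for all $\xi,\eta\in H$; in particular $\||x|\xi\| = \|x^*\xi\|$. Hence the rule $|x|\xi \mapsto x^*\xi$ is a well-defined linear isometry from $|x|H$ onto $x^*H$ (if $|x|\xi=0$ then $\|x^*\xi\|=\||x|\xi\|=0$). Extending this isometry by continuity to the closure $\overline{|x|H}=[|x|]H$ and declaring it to be $0$ on the orthogonal complement $([|x|]H)^\perp$, I obtain a partial isometry $w\in B(H)$ with initial projection $w^*w=[|x|]$ and $w|x|=x^*$. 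Put $u:=w^*$. Taking adjoints of $w|x|=x^*$ gives $x=|x|w^*=|x|u$, while $uu^*=w^*w=[|x|]$; since the adjoint of a partial isometry is again a partial isometry, $u$ has all the required properties as soon as it is known to lie in $\mathcal{M}$.

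\emph{Membership in $\mathcal{M}$.} Here I would use $\mathcal{M}=\mathcal{M}''$ and check that $w$ commutes with every $a'\in\mathcal{M}'$. Since $|x|\in\mathcal{M}$, for each $\xi\in H$ we have $w a'(|x|\xi)=w|x|(a'\xi)=x^*a'\xi=a'x^*\xi=a'w(|x|\xi)$, so $wa'=a'w$ on $|x|H$, hence on $[|x|]H$ by continuity. On $([|x|]H)^\perp$ I would invoke the fact recalled earlier, namely that $[|x|]\in\mathcal{M}$: then $a'$ commutes with $[|x|]$, so it preserves $([|x|]H)^\perp$, and there $wa'\xi=0=a'w\xi$. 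Thus $w\in\mathcal{M}''=\mathcal{M}$ and $u=w^*\in\mathcal{M}$.

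\emph{Uniqueness.} Suppose $u_1,u_2\in\mathcal{M}$ are partial isometries with $u_iu_i^*=[|x|]$ and $x=|x|u_i$. Then $|x|(u_1-u_2)=0$, and since $\ker|x|=([|x|]H)^\perp$ this yields $[|x|]u_1=[|x|]u_2$. But the partial-isometry identity $u_iu_i^*u_i=u_i$ reads $[|x|]u_i=u_i$, so $u_1=u_2$. I expect the only substantive point to be the membership step: controlling $w$ on $([|x|]H)^\perp$, and hence placing $u$ in $\mathcal{M}$, rests squarely on the property that range projections of elements of a von Neumann algebra lie in that algebra, whereas the rest is routine manipulation of the single identity $\||x|\xi\|=\|x^*\xi\|$.
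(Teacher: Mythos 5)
Your proof is correct, but it takes a genuinely different route from the paper's. You build the partial isometry at the Hilbert-space level from the identity $\||x|\xi\|=\|x^*\xi\|$ (valid here because the paper sets $|x|=(xx^*)^{1/2}$), and then place it inside $\mathcal{M}$ by the bicommutant theorem, checking commutation with $\mathcal{M}'$ separately on $[|x|]H$ and on its complement via the previously recorded fact that range projections of elements of $\mathcal{M}$ lie in $\mathcal{M}$; there is no circularity there, since that fact is obtained from the monotone limit $\left(\left(1/n\right)+|x|\right)^{-1}|x| \nearrow [|x|]$, not from polar decomposition. The paper instead never leaves the algebra: it sets $u_n = x\left(\left(1/n\right)+|x|\right)^{-1} \in \mathcal{M}$, proves strong convergence by a spectral-theory estimate on $(u_n-u_m)^*(u_n-u_m)$, and gets membership of the limit for free from strong closedness, with $u^*u=[|x|]$ extracted from $x^*x=|x|u^*u|x|$. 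What each buys: the paper's argument needs only that $\mathcal{M}$ is strongly closed and exhibits $u$ as an explicit strong limit of elements $xf_n(|x|)$ -- exactly the device reused later in \ref{ortogonalization_of_algebra} and in the proof of Theorem \ref{main_lem} -- while your argument is more elementary but leans on $\mathcal{M}=\mathcal{M}''$, which tacitly assumes $\mathcal{M}$ acts nondegenerately (contains $1$), the standard convention in Pedersen though slightly more than literal strong closedness. A small point in your favour: your construction and uniqueness argument are consistent with the proposition as stated ($x=|x|u$, $uu^*=[|x|]$), whereas the paper's proof actually delivers the decomposition in the transposed form $x=u|x|$ with $u^*u=[|x|]$.
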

\begin{proof}
Consider the sequence $u_n =x\left(\left(1/n\right)+ |x|\right)^{-1}$. Since $x=x[|x|]$ we have $u_n=u_n[x]$. A short computation shows that
\begin{equation}
(u_n - u_m)^*(u_n-u_m)=\left(\left(\left(1/n\right)+|x|\right)^{-1}-\left(\left(1/m\right)+|x|\right)^{-1}\right)|x|^2
\end{equation}
and this tends strongly, hence weakly to zero by spectral theory. It follows that $\{u_n\}$ is strongly convergent to an element $u\in \mathcal{M}$ with $u[|x|]=u$. Since $\{u_n|x|\}$ is norm convergent to $x$ we have $x=u|x|$. Then $x^*x= |x|u^*u|x|$ which implies that $u^*u \ge [|x|]$. Hence $u^*u = [|x|]$, in particular $u$ is a partial isometry. If $x = v|x|$ then from $v|x|=u|x|$ we get $v=v[|x|]=u$, so $u$ is unique.
\end{proof}

\begin{empt}\label{ortogonalization_of_algebra}
Let $I$ be a finite or countable set of indices, $B \in B(H)$ be a $C^*$- algebra, $\{e_i\} \subset B$, ($i\in I$) is such that
\begin{equation}\label{algebra_sum}
\sum_{i\in I}^{}e^*_ie_i = 1_{M(B)}
\end{equation}
with respect to the strict topology.
\newline
Let $e_i = v_i|e_i|$ be the polar decomposition, we define elements $u_i \in B''$ such that
\begin{equation}\label{orth_alg_defn}
u_i = \left([e_i] \setminus \left([e_1] \vee[e_2] \vee ... \vee [e_{i-1}]] \right) \right)v_i.
\end{equation}
From $\left([e_i]| \setminus \left([e_1] \vee[e_2] \vee ... \vee [e_{i-1}]] \right)\right)v_j = 0$,  ($j =1, ..., i-1$) it follows that $u^*_iv_j =  u^*_i \left([e_i]| \setminus \left([e_1] \vee[e_2] \vee ... \vee [e_{i-1}]] \right) \right)v_j=0$, hence
\begin{equation}\label{ji_orth}
 u^*_iu_j=0, \ \forall i \neq j
\end{equation}
and
\begin{equation}\label{ji_orth1}
[u_1] \vee ... \vee [u_i] = [u_1 + ... + u_i].
\end{equation}
From
\ref{orth_alg_defn} it follows that
\begin{equation}
[u_i] = [e_i] \setminus \left([e_1] \vee[e_2] \vee ... \vee [e_{i-1}]] \right).
\end{equation}
Hence
\begin{equation}
[u_1] \vee ... \vee [u_i] = [e_1] \vee ... \vee [e_{i}].
\end{equation}

From \ref{algebra_sum} it follows that $\bigvee_i [e_i] = 1_{B''} = 1_{M(B)}$. So $\bigvee_i [u_i] = 1_{B''}$. Because any $u_i$ is a partial isometry and from \ref{ji_orth} it follows that
\begin{equation}\label{algebra_usum}
\sum_{i\in I}^{}u^*_iu_i = 1_{B''}
\end{equation}
\end{empt}
\begin{defn}\label{vn_ort_defn}
We say that $\{u_i\}$ is the {\it von Neumann orthogonalization} of $\{e_i\}$ because $u_iH \perp u_jH$, ($i \neq j$).
It is easy to check that
\begin{equation}\label{u_conatis_e}
u_iH\subset e_iH.
\end{equation}
\end{defn}


\begin{defn}
Let $_AX_B$ be a  Hermitian $B$-rigged $A$-module, $B \to B(H)$ a faithful representation. For any $h \in H$  we define a seminorm $\vertiii{}_h$ on $_AX_B$ such that
\begin{equation*}
\vertiii{\xi}_h = \|\langle \xi, \xi \rangle_Bh\|.
\end{equation*}
Completion of $_AX_B$ with respect to above seminorms is said to be the {\it strong completion}. Denote by $X''$ or $_AX''_B$ the strong completion. There is the natural scalar product $\langle,\rangle_{X''}$ such that
\begin{equation}
\langle \xi, \zeta \rangle_{X''} \in B'',  \ \forall\xi,\zeta \in X''.
\end{equation}
\end{defn}
\begin{empt}
Since $X\otimes_B H$ is norm complete there is a following natural $B$-isomorphism
\begin{equation}\label{von_neum_iso}
X\otimes_B H \approx X''\otimes_{B''} H.
\end{equation}
\end{empt}

\section{Galois rigged modules}
\begin{defn}\label{herm_a_g_defn}

Let $A$ be a $C^*$-algebra, $G$ is a finite or countable group which acts on $A$. We say that  $H \in \mathbf{Herm}(A)$ is a {\it $A$-$G$ Hermitian module} if
\begin{enumerate}
\item Group $G$ acts on $H$ by unitary $A$-linear isomorphisms,
\item There is a subspace $H^G \subset H$ such that
\begin{equation}\label{g_act}
H = \bigoplus_{g\in G}gH^G.
\end{equation}
\end{enumerate}
Let $H$, $K$ be  $A$-$G$ Hermitian modules, a morphism $\phi: H\to K$ is said to be a $A$-$G$-morphism if $\phi(gx)=g\phi(x)$ for any $g \in G$. Denote by $\mathbf{Herm}(A)^G$ a category of  $A$-$G$ Hermitian modules and $A$-$G$-morphisms.
\end{defn}
\begin{rem}
Condition 2 in the above definition is introduced because any topological covering projection $\widetilde{\mathcal{X}} \to \mathcal{X}$ commutative $C^*$ algebras $C_0\left(\widetilde{\mathcal{X}}\right)$, $C_0\left(\mathcal{X}\right)$ satisfies it with respect to the group of covering transformations $G(\widetilde{\mathcal{X}}| \mathcal{X})$. (See \ref{comm_exm})
\end{rem}

\begin{defn}
Let $H$ be $A$-$G$ Hermitian module, $B\subset M(A)$ is sub-$C^*$-algebra such that $(ga)b = g(ab)$,  $b(ga) = g(ba)$, for any $a\in A$, $b \in B$, $g \in G$.
There is a functor $(-)^G: \mathbf{Herm}(A)^G \to\mathbf{Herm}(B)$ defined by following way
\begin{equation}
H \mapsto H^G.
\end{equation}
This functor is said to be the {\it invariant functor}.
\end{defn}

\begin{defn}
Let $_AX_B$ be a Hermitian $B$-rigged $A$-module, $G$ is finite or countable group such that
\begin{itemize}
\item $G$ acts on $A$ and $X$,
\item Action of $G$ is equivariant, i.e $g (a\xi) = (ga) (g\xi)$ , and $B$ invariant, i.e $g(\xi b)=(g\xi)b$ for any $\xi \in X$, $b \in B$, $a\in A$, $g \in G$,
\item Inner-product  of $G$ is equivariant, i.e $\langle g\xi, g \zeta\rangle_X = \langle\xi, \zeta\rangle_X$ for any $\xi, \zeta \in X$,  $g \in G$.
\end{itemize}
Then we say that  $_AX_B$ is a {\it $G$-equivariant $B$-rigged $A$-module}.
\end{defn}
\begin{empt}
Let $_AX_B$ be a  $G$-equivariant $B$-rigged $A$-module. Then for any  $H\in \mathbf{Herm}(B)$ there is an action of $G$  on $X\otimes_B H$ such that
\begin{equation}
g \left(x \otimes \xi\right) = \left(x \otimes g\xi\right).
\end{equation}

\end{empt}

\begin{defn}\label{inf_galois_defn}
Let $_AX_B$ be a $G$-equivariant $B$-rigged $A$-module. We say that  $_AX_B$ is {\it $G$-Galois $B$-rigged $A$-module} if it satisfies following conditions:
\begin{enumerate}
\item  $X \otimes_B H$ is a $A$-$G$ Hermitian module, for any $H \in \mathbf{Herm}(B)$,
\item A pair   $\left(X \otimes_B -, \left(-\right)^G\right)$ such that
\begin{equation}\nonumber
X \otimes_B -: \mathbf{Herm}(B) \to \mathbf{Herm}(A)^G,
\end{equation}
\begin{equation}\nonumber
(-)^G: \mathbf{Herm}(A)^G \to \mathbf{Herm}(B).
\end{equation}

is a pair of inverse equivalence.

\end{enumerate}

\begin{rem}
Above definition is an analog of the theorem \ref{hoph_galois_def_thm} and the definition \ref{hoph_galois_def}.
\end{rem}
\end{defn}
Following theorem is an analog of the lemma \ref{cond_4_equiv_lem} and it is very close to theorems described in \cite{miyashita_infin_outer_gal}, \cite{takeuchi:inf_out_cov}.
\begin{thm}\label{main_lem}
Let $A$ and $\widetilde{A}$ be $C^*$-algebras,  $_{\widetilde{A}}X_A$ be a $G$-equivariant $A$-rigged $\widetilde{A}$-module. Let $I$ be a finite or countable set of indices,  $\{e_i\}_{i\in I} \subset M(A)$, $\{\xi_i\}_{i\in I} \subset \ _{\widetilde{A}}X_A$ such that
\begin{enumerate}
\item
\begin{equation}\label{1_mb}
1_{M(A)} =  \sum_{i\in I}^{}e^*_ie_i,
\end{equation}
\item
\begin{equation}\label{1_mkx}
1_{M(\mathcal{K}(X))} = \sum_{g\in G}^{} \sum_{i \in I}^{}g\xi_i\rangle \langle g\xi_i ,
\end{equation}
\item
\begin{equation}\label{ee_xx}
\langle \xi_i, \xi_i \rangle_X = e_i^*e_i,
\end{equation}
\item
\begin{equation}\label{g_ort}
\langle g\xi_i, \xi_i\rangle_X=0, \ \text{for any nontrivial} \ g \in G.
\end{equation}
\end{enumerate}
Then $_{\widetilde{A}}X_A$ is a $G$-Galois $A$-rigged $\widetilde{A}$-module.

\end{thm}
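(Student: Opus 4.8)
The plan is to establish, in turn, the two requirements of Definition~\ref{inf_galois_defn}, the decisive point being --- just as in Lemma~\ref{cond_4_equiv_lem} --- to produce the invariant subspace together with the splitting $\bigoplus_{g\in G}g(\,\cdot\,)^G$ without averaging over the possibly infinite group $G$. I begin with the routine facts. Fix $H\in\mathbf{Herm}(A)$; on $X\otimes_A H$ the group acts by $g(\xi\otimes h)=(g\xi)\otimes h$, which is well defined since the $G$-action on $X$ commutes with the right $A$-action, and since $\langle g\xi,g\xi'\rangle_X=\langle\xi,\xi'\rangle_X$ we get $\langle g\xi\otimes h,g\xi'\otimes h'\rangle=\langle\langle g\xi',g\xi\rangle_A h,h'\rangle=\langle\xi\otimes h,\xi'\otimes h'\rangle$, so each $g$ acts by a unitary automorphism as required by Definition~\ref{herm_a_g_defn}. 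Put $P_g:=\sum_{i\in I}g\xi_i\rangle\langle g\xi_i\in M(\mathcal{K}(X))$; invariance of $\langle\cdot,\cdot\rangle_X$ gives $g\bigl(\eta\rangle\langle\zeta\bigr)g^{-1}=g\eta\rangle\langle g\zeta$, hence $P_g=gP_eg^{-1}$, and~(\ref{1_mkx}) says exactly $\sum_{g\in G}P_g=1_{M(\mathcal{K}(X))}$.

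The core of the proof is to replace $\{P_g\}_{g\in G}$ by a mutually orthogonal family of projections. Conditions~(\ref{ee_xx})--(\ref{g_ort}) and equivariance of the inner product make the translates $\{g\xi_i\}_{g\in G}$ of a fixed $\xi_i$ pairwise orthogonal, but for $i\neq j$ the elements $\xi_i,\xi_j$ need not be, so $P_e$ is in general not idempotent. I would pass to the strong completion $_{\widetilde A}X''_A$ --- recalling $X\otimes_A H\cong X''\otimes_{A''}H$ from~(\ref{von_neum_iso}) --- and apply the von~Neumann orthogonalization of~\ref{ortogonalization_of_algebra}--\ref{vn_ort_defn}, performed over the index set $I$ and arranged so that at stage $i$ one removes the closure of the $G$-invariant submodule generated by $\xi_1,\dots,\xi_{i-1}$. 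This produces $\{\zeta_i\}\subset X''$ with $\langle\zeta_i,\zeta_j\rangle_{X''}=0$ for $i\neq j$, with $g\zeta_i\perp g'\zeta_j$ whenever $(g,i)\neq(g',j)$, and --- using $\bigvee_{g,i}[g\xi_i]=1$, which is~(\ref{1_mkx}) --- with the translated frame $\{g\zeta_i\}_{g,i}$ still total in $X''$. Then $P_e':=\sum_i\zeta_i\rangle\langle\zeta_i$ is a projection, the $P_g':=gP_e'g^{-1}$ are mutually orthogonal, and $\sum_{g}P_g'=1_{M(\mathcal{K}(X''))}$, so
\begin{equation}\nonumber
X''\otimes_{A''}H=\bigoplus_{g\in G}(P_g'\otimes1)\bigl(X''\otimes_{A''}H\bigr)=\bigoplus_{g\in G}g\Bigl((P_e'\otimes1)\bigl(X''\otimes_{A''}H\bigr)\Bigr),
\end{equation}
and transporting this through~(\ref{von_neum_iso}) exhibits $X\otimes_A H$ as an $\widetilde A$-$G$ Hermitian module with $(X\otimes_A H)^G:=(P_e'\otimes1)(X\otimes_A H)$. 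Thus condition~1 of Definition~\ref{inf_galois_defn} holds; the step I expect to cost the most effort is making this orthogonalization $G$-equivariant and checking that the infinite strict sums (both~(\ref{1_mkx}) and the sum $\sum_i\zeta_i\rangle\langle\zeta_i$) are genuinely preserved.

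For the second requirement I would exhibit the unit and counit. Because the $\zeta_i$ are pairwise orthogonal and $\sum_i\langle\zeta_i,\zeta_i\rangle_{X''}$ is, after the orthogonalization, the identity, the element $\zeta:=\sum_i\zeta_i\in X''$ satisfies $\langle\zeta,\zeta\rangle_{X''}=1$, the map $a\mapsto\zeta a$ is an isometric isomorphism of $A''$-rigged modules onto the range of $P_e'$, and $\{g\zeta\}_{g\in G}$ is orthonormal in $X''$; hence $\eta_H:H\to(X\otimes_A H)^G$, $h\mapsto\zeta\otimes h$, is an $A$-linear isometry onto the invariant subspace, natural in $H$. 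Dually, for $K\in\mathbf{Herm}(\widetilde A)^G$ with $K=\bigoplus_g gK^G$, the reconstruction identity $\omega=\sum_{g,i}g\zeta_i\langle g\zeta_i,\omega\rangle_{X''}$ determines a unitary $\widetilde A$-$G$-morphism $\varepsilon_K:X\otimes_A K^G\to K$, which on $g\zeta_i\otimes k$ returns the $g$-translate of the piece of $k$ recorded by $\zeta_i$; conditions~(\ref{1_mb})--(\ref{g_ort}) force $\varepsilon_K$ to be well defined and invertible, and $\eta$, $\varepsilon$ are then checked to be mutually inverse (equivalently, to satisfy the triangle identities) by straightforward manipulation with the four hypotheses. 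This shows $\left(X\otimes_A -,(-)^G\right)$ is a pair of inverse equivalences, so, together with condition~1 above, $_{\widetilde A}X_A$ is a $G$-Galois $A$-rigged $\widetilde A$-module.
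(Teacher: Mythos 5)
Your overall strategy (strong completion, orthogonalization, explicit unit and counit) is the paper's strategy, but there is a genuine gap at the decisive step, and it sits exactly where hypotheses \eqref{1_mb} and \eqref{ee_xx} have to do real work --- your sketch never actually uses them. You propose to orthogonalize the module elements $\xi_i$ directly inside $X''$ and then assert that ``after the orthogonalization'' one has $\sum_i\langle\zeta_i,\zeta_i\rangle_{X''}=1_{A''}$, so that $\zeta=\sum_i\zeta_i$ satisfies $\langle\zeta,\zeta\rangle_{X''}=1$ and $h\mapsto\zeta\otimes h$ maps $H$ isometrically \emph{onto} the invariant subspace. That does not follow from what you have established, namely pairwise orthogonality of the translated frame together with totality $\sum_{g,i}g\zeta_i\rangle\langle g\zeta_i=1_{M(\mathcal{K}(X''))}$. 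Counterexample: take $X=A$ with $\langle x,y\rangle=x^*y$, $G$ trivial, and a single element $\zeta_1=v$ with $vv^*=1$ but $v^*v\neq1$ (e.g.\ the adjoint of the unilateral shift in the Toeplitz algebra); then $\zeta_1\rangle\langle\zeta_1=1_{M(\mathcal{K}(X))}$ while $\langle\zeta_1,\zeta_1\rangle=v^*v\neq1$. Totality plus orthogonality controls only the ``range'' side of the frame, not its Gram sum, and without the Gram sum being the identity your $\eta_H$ is not surjective onto the range of $P_e'$ and the counit/unit comparison collapses. A second, related problem is that the von Neumann orthogonalization of \ref{ortogonalization_of_algebra}--\ref{vn_ort_defn} is defined for a family of algebra elements acting on a Hilbert space (range projections, lattice operations, polar decomposition); your version ``performed in $X''$, removing at stage $i$ the $G$-invariant submodule generated by $\xi_1,\dots,\xi_{i-1}$'' is an unproved construction, and making it simultaneously $G$-equivariant, total, and compatible with the strict sums is precisely the part you defer.

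The paper closes this gap by orthogonalizing on the \emph{coefficient} side: because of \eqref{1_mb} the von Neumann orthogonalization applies to $\{e_i\}\subset A''$ and produces partial isometries $u_i\in A''$ with $u_i^*u_j=0$ for $i\neq j$ and $\sum_iu_i^*u_i=1_{A''}$ (this is \eqref{1_b''}); one then forms $\xi'_i$ by the polar-decomposition limits and sets $\xi''_i=\xi'_iu_i$, so that \eqref{ee_xx} gives $\langle\xi''_i,\xi''_j\rangle_{X''}=u_i^*u_j$, while \eqref{1_mkx} and \eqref{g_ort} carry over to the $\xi''_i$. It is exactly this coupling of the module frame to the partial isometries $u_i$ that yields both the decomposition $X\otimes_AH=\bigoplus_{g\in G}g(X\otimes_AH)^G$ and the explicit natural isomorphisms $\varphi$ and $\theta$ built from $H=\bigoplus_{g}g\bigoplus_iu_iH^G$. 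To repair your argument you would have to produce your $\zeta_i$ \emph{together with} such $u_i$ satisfying $\langle\zeta_i,\zeta_j\rangle_{X''}=u_i^*u_j$ and $\sum_iu_i^*u_i=1_{A''}$ --- that is, essentially reproduce the paper's construction rather than an abstract Gram--Schmidt carried out inside $X''$ alone.
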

\begin{proof}
For any $H\in \mathbf{Herm}(A)$ we construct $\left(X \otimes_A H\right)^G \subset X \otimes_A H$,   such that
\begin{equation}\nonumber
X \otimes_A H = \bigoplus_{g\in G}g\left(X \otimes_A H\right)^G.
\end{equation}
\begin{equation}\nonumber
H \approx \left(X \otimes_A H\right)^G.
\end{equation}

Consider following weak (=strong) limits

\begin{equation}\nonumber
v_i = \lim_{n \to \infty} \left(\left(1/n\right)+ |e_i|\right)^{-1}e_i \in A'',  \ e_i= v_i|e_i|,
\end{equation}
\begin{equation}\nonumber
\xi'_{i} =  \lim_{n \to \infty} \left(\left(1/n\right)+ |e_i|\right)^{-1}\xi_i \in X''.
\end{equation}
From \eqref{ee_xx} it follows that
\begin{equation}\label{xi_prod}
\langle \xi'_i, \xi'_i \rangle_{X''} = v^*_iv_i.
\end{equation}
Let $\{u_i\}_{i \in I} \subset B''$ be the von Neumann orthogonalization of $\{e_i\}_{i \in I}$. Let $\{\zeta''_i\}_{i\in I}\subset X''$ be such that

\begin{equation}\nonumber
\xi''_i = \xi'_iu_i.
\end{equation}
Then from definition \ref{vn_ort_defn} and \eqref{xi_prod} it follows that
\begin{equation}\label{orth_u}
\langle \xi''_i, \xi''_j \rangle_{X''} = u^*_iu_j.
\end{equation}
From (\ref{ji_orth}), (\ref{orth_u}) it follows that
\begin{equation}\label{ort_i}
\langle \xi''_i, \xi''_j \rangle_{X''} = u^*_iu_j = 0, \ (i \neq j).
\end{equation}
From \eqref{1_mkx} and \eqref{g_ort} if follows that
\begin{equation}\label{1_mkx''}
1_{M(\mathcal{K}(X''))} = \sum_{g\in G}^{} \sum_{i=1}^{\infty}g\xi''_i\rangle \langle g\xi''_i ,
\end{equation}
\begin{equation}\label{g_ort_p}
\langle g\xi''_i, \xi''_i \rangle_{X''} = 0, \ \text{for any nontrivial} \ g \in G,
\end{equation}
From \eqref{von_neum_iso} it follows that we can define $\xi''_i \otimes h \in X \otimes_B H$.
Define  $\left(X \otimes_A H\right)^G$ as a norm closure of a generated by elements $\xi''_i \otimes h$ $A$-module, ($i \in I, h \in H$).
It is clear that $g\left(X \otimes_A H\right)^G$ is generated by elements $g\xi''_i \otimes h$, ($i \in I, h \in H$).
From \eqref{ort_i} \eqref{1_mkx''}, \eqref{g_ort_p} it follows that
\begin{equation}\nonumber
X \otimes_B H = \bigoplus_{g \in G} g \left(X \otimes_B H\right)^G
\end{equation}
So $X$ satisfies condition 1 of definition \ref{inf_galois_defn}.
Let $H\in \mathbf{Herm}(\widetilde{A})^G$ be a Hermitian $\widetilde{A}$-$G$-module. For any $h_1, h_2 \in H^G$ we have
\begin{equation}\label{ort_2}
\langle \xi''_i \otimes h_1, \xi''
_j \otimes h_2 \rangle =  \langle \langle \xi''_i,\xi''_j \rangle_{X''} h_1,  h_2\rangle. = \langle u^*_i u_j h_1, h_2\rangle
\end{equation}
Form (\ref{ort_i}), (\ref{ort_2}) it follows that
\begin{equation}\nonumber
\langle \zeta''_i \otimes h_1, \zeta''_j \otimes h_2 \rangle = 0, \ (i \neq j, \ h_1, h_2 \in H^G)
\end{equation}
\begin{equation}\label{g_ort''}
\langle \zeta''_i \otimes h_1, g \zeta''_i \otimes h_2 \rangle = 0, \ \text{for any nontrivial} \ g \in G.
\end{equation}
From (\ref{1_mb}) \eqref{1_mkx} it follows that

\begin{equation}\label{1_b''}
1_{B''} = \sum_{i \in I}u^*_iu_i ,
\end{equation}
\begin{equation}\nonumber
H^G = \bigoplus_{i \in I}u_iH^G.
\end{equation}
\begin{equation}\label{rep_phi}
H = \bigoplus_{g\in G} g \bigoplus_{i \in I}u_iH^G.
\end{equation}
Representation \eqref{rep_phi} supplies following natural isomorphism
\begin{equation*}
 \varphi: \left(X \otimes_A -\right) \circ \left(-\right)^G \approx 1_{\mathbf{Herm}(\widetilde{A})^G} \ ,
\end{equation*}
\begin{equation*}
\varphi\left(\sum_{g \in G} \sum_{i \in I} g\left(u_ih_{gi}\right)\right) \stackrel{\text{def}}{=} \sum_{g \in G} \sum_{i \in I} \left(g\xi''_i \otimes h_{gi}\right).
\end{equation*}
There is a natural $\theta$ isomorphism such that
\begin{equation*}
 \theta: \left(-\right)^G \circ \left(X \otimes_A -\right) \ \approx 1_{\mathbf{Herm}(A)} \ ,
\end{equation*}

\begin{equation*}
\theta\left( \sum_{i \in I} \xi''_i \otimes h_i\right) \stackrel{\text{def}}{=}  \sum_{i \in I} u_i h_i.
\end{equation*}
So $\left(X \otimes_B -, \left(-\right)^G\right)$ is a pair of inverse equivalence.
\end{proof}
\begin{defn}
Consider a situation from the theorem \ref{main_lem}. Let us consider two specific cases
\begin{enumerate}
\item $e_i \in A$ for any $i \in I$,
\item $\exists i \in I \ e_i \notin A$.
\end{enumerate}

Norm completion of the generated by operators
\begin{equation*}
g\xi_i^* \rangle \langle g \xi_i \ a; \ g \in G, \ i \in I, \ \begin{cases}
   a \in M(A), & \text{in case 1},\\
    a \in A, & \text{in case 2}.
  \end{cases}
\end{equation*}
algebra is said to be the {\it subordinated to $\{\xi_i\}_{i \in I}$ algebra}. If $\widetilde{A}$ is the subordinated to $\{\xi_i\}_{i \in I}$ then
\begin{enumerate}
\item $G$ acts on $\widetilde{A}$ by following way
\begin{equation*}
g \left( \ g'\xi_i \rangle \langle g' \xi_i \ a \right) =  gg'\xi_i \rangle \langle gg' \xi_i \ a; \ a \in M(A).
\end{equation*}
\item $X$ is a left $A$ module, moreover $_{\widetilde{A}}X_A$ is a  $G$-Galois $A$-rigged $\widetilde{A}$-module.
\item There is a natural $G$-equivariant *-homomorphism $\varphi: A \to M\left(\widetilde{A}\right)$, $\varphi$ is equivariant, i.e.
\begin{equation}
 \varphi(a)(g\widetilde{a})= g \varphi(a)(\widetilde{a}); \ a \in A, \ \widetilde{a}\in \widetilde{A}.
\end{equation}
\end{enumerate}
A quadruple $\left(A, \widetilde{A}, _{\widetilde{A}}X_A, G\right)$ is said to be a {\it Galois quadruple}. The group $G$ is said to be a {\it group Galois transformations} which shall be denoted by $G\left(\widetilde{A}\ | \ A\right)=G$.
\end{defn}
\begin{rem}
Henceforth subordinated algebras only are regarded as noncommutative generalizations of covering projections.
\end{rem}
\begin{defn}
If $G$ is finite then bimodule $_{\widetilde{A}}X_A$ can be replaced with $_{\widetilde{A}}\widetilde{A}_A$ where product $\langle \ , \ \rangle_{\widetilde{A}}$ is given by \eqref{inv_scalar_product}. In this case a Galois quadruple $\left(A, \widetilde{A}, _{\widetilde{A}}X_A, G\right)=\left(A, \widetilde{A}, _{\widetilde{A}}A_A, G\right)$ can be replaced with a {\it Galois triple}  $\left(A, \widetilde{A}, G\right)$.
\end{defn}
 \begin{lem}\label{ideal_lem} Let us consider situation from theorem \ref{main_lem}. Then for any $i \in I$ there is a natural isomorphism of right $A$ modules given by
 \begin{equation}\label{ideal_ei}
  e^*_ie_iM(A) \approx \xi_i \rangle \langle \xi_i \ M(A) 
 \end{equation}
 Let $A \to B(H)$ be a Hermitian representation. There is a natural isomorphism of hermitian $A$-modules
 \begin{equation}\label{hilbert_ei}
 e^*_ie_i H \approx \xi_i \rangle \langle \xi_i \ X \otimes_A H.
 \end{equation}
 \end{lem}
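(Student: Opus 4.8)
The plan is to produce the two isomorphisms by writing down explicit maps whose inverses are also explicit, and then checking they are module (resp.\ Hermitian) maps. For \eqref{ideal_ei}, first I would observe that $e_i^*e_i = \langle \xi_i, \xi_i\rangle_X$ by hypothesis \eqref{ee_xx}, and that $\xi_i \rangle\langle \xi_i = \theta_{\xi_i,\xi_i}$ acts on $X$ by $\eta \mapsto \xi_i\langle\xi_i,\eta\rangle_X$. The natural candidate map $e_i^*e_i M(A) \to \xi_i\rangle\langle\xi_i\, M(A)$ sends $e_i^*e_i\, m \mapsto \theta_{\xi_i,\xi_i}\, m$ (using the $M(A)$-module structure on $\mathcal{K}(X)$ coming from the right $A$-action, which extends to $M(A)$ by strict continuity). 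This is visibly a right $A$-module map; I would check it is well-defined and injective by relating $\|e_i^*e_i\, m\|$ to $\|\theta_{\xi_i,\xi_i}\, m\|$. The key computation is $\theta_{\xi_i,\xi_i}^* \theta_{\xi_i,\xi_i} = \theta_{\xi_i,\xi_i\langle\xi_i,\xi_i\rangle_X} = \theta_{\xi_i\, e_i^*e_i,\, \xi_i}$, and more usefully that for $m \in M(A)$ one has, via the polar-decomposition elements $v_i$ introduced in the proof of Theorem~\ref{main_lem} (so $e_i = v_i|e_i|$, $|e_i| = (e_i^*e_i)^{1/2}$), a factorization $\xi_i = \xi_i'|e_i|$ with $\langle\xi_i',\xi_i'\rangle_{X''} = v_i^*v_i = [|e_i|]$ a projection. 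This lets me identify both $e_i^*e_i M(A)$ and $\xi_i\rangle\langle\xi_i\, M(A)$ with $|e_i|\,M(A)$ (closure in the strict topology, matching the standard identification $pM(A) \cong \overline{p'M(A)}$ for Murray--von Neumann equivalent projections $p=[e_i^*e_i]$, $p'=[|e_i|]$). I expect the routine part to be bookkeeping with the strict topology, since $e_i$ need not lie in $A$.

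For \eqref{hilbert_ei}, let $A \to B(H)$ be the given non-degenerate representation. The map I would use is $e_i^*e_i\, h \mapsto \xi_i \otimes h \in X\otimes_A H$; equivalently, using the description in \ref{herm_functor_defn} of the inner product on $X\otimes_A H$, the element $\xi_i\otimes h$ has length $\langle \langle\xi_i,\xi_i\rangle_X h, h\rangle_H = \langle e_i^*e_i\, h, h\rangle_H = \|(e_i^*e_i)^{1/2}h\|^2$. So $h \mapsto \xi_i\otimes h$ factors through $(e_i^*e_i)^{1/2}H$ and extends to an isometry $\overline{(e_i^*e_i)^{1/2}H} = \overline{e_i^*e_i\,H} \to X\otimes_A H$ with image the closed $A$-submodule generated by $\xi_i\otimes H$, which is exactly $\theta_{\xi_i,\xi_i}(X)\otimes_A H$ once one checks $\theta_{\xi_i,\xi_i}\,(X\otimes_A H)$ (interpreting $\theta_{\xi_i,\xi_i}$ as acting on the first leg) equals that submodule. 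That the map intertwines the $A$-actions is immediate from $a(\xi_i\otimes h) = (a\xi_i)\otimes h$ versus the action on $e_i^*e_i H \subset H$ — wait, here one must be careful: the left $A$-module structure on $e_i^*e_i H$ as a Hermitian $A$-module is via $H$, but $e_i \in M(A)$, so $e_i^*e_i H$ is genuinely $A$-invariant and the identification is $A$-linear because $\langle\xi_i,\xi_i\rangle_X \in M(A)$ is central enough for this to work on the nose.

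The main obstacle, I expect, is handling the case $e_i \notin A$ cleanly: one cannot simply quote finite-dimensional or unital-module facts, and the object $\xi_i\rangle\langle\xi_i\, M(A)$ lives in $\mathcal{L}_A(X)$ rather than $\mathcal{K}(X)$, so the right $M(A)$-module structures and the relevant closures must be taken in the appropriate (norm on $\mathcal{K}(X)$, strict on $M(A)$) topologies, and the isomorphism \eqref{ideal_ei} should be read as an isomorphism of the norm-closed right $A$-submodules together with their $M(A)$-module structure. I would therefore phrase the argument so that the core identity is the single equation $\langle \xi_i a, \xi_i b\rangle_X = a^*(e_i^*e_i)b$ for $a,b\in A$ (extending to $M(A)$), from which both isomorphisms follow by completing, exactly as in Rieffel's construction of $X\otimes_A H$; the polar-decomposition elements $v_i \in A''$ are only needed to see that the closed submodule $\overline{e_i^*e_i M(A)}$ is the "same size" as $\overline{|e_i|M(A)}$, i.e.\ that passing from $e_i$ to $|e_i|$ does not change the module, which is the one place weak closures genuinely enter.
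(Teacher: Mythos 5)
Your core argument --- the single identity $\langle\xi_i a,\xi_i b\rangle_X = a^*e_i^*e_i b$ together with $C^*$-positivity, which shows that $m\mapsto e_i^*e_i m$ and $m\mapsto \xi_i\rangle\langle\xi_i\, m$ have the same kernel, and that $\xi_i\otimes h$ has length $\|(e_i^*e_i)^{1/2}h\|$ so that $e_i^*e_ih\leftrightarrow\xi_i\otimes h$ identifies $e_i^*e_iH$ with $\xi_i\rangle\langle\xi_i\,(X\otimes_A H)$ --- is exactly the paper's proof, which records the equivalences $e_i^*e_ia=0\Leftrightarrow e_ia=0\Leftrightarrow\xi_ia=0\Leftrightarrow\xi_i\rangle\langle\xi_i\,a=0$ and the computation $\xi_i\rangle\langle\xi_i\,(\zeta\otimes h)=\xi_i\otimes\langle\xi_i,\zeta\rangle h$. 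The additional detour through the polar-decomposition elements $v_i$, Murray--von Neumann equivalent range projections and strict closures is unnecessary (and, read as literal equalities of the non-closed modules such as $e_i^*e_iM(A)=|e_i|M(A)$, would be false): the kernel equality alone already makes $e_i^*e_im\leftrightarrow\xi_i\rangle\langle\xi_i\,m$ a bijection of the modules exactly as stated, which is all the paper uses.
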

 \begin{proof}
 Let $a \in M(A)$ be such that $e^*_ie_ia=0$. Then from  $(e_ia)^*(e_ia) = a^*e^*_ie_ia=0$ it follows that $e_ia=0$. Similarly from $\xi_i \rangle \langle \xi_i a = 0$ it follows that $\xi_i a \rangle \langle \xi_i a = 0$, and therefore $\xi_i a = 0$. Tnere is an equivalence $\xi_i a = 0 \Leftrightarrow\langle \xi_i a, \xi_i a \rangle=0$. But  $\langle \xi_i a, \xi_i a \rangle=a^*e^*_ie_ia$, and we have $e_ia=0$. There are following equivalences
 \begin{equation*}
 e^*_ie_ia = 0 \Leftrightarrow e_ia = 0 \Leftrightarrow \xi_i a = 0 \Leftrightarrow \xi_i \rangle \langle \xi_i a = 0; \ a \in A.
 \end{equation*}
 From above equivalences it follows \eqref{ideal_ei}. Similarly $e^*_ie_ih = 0 \Leftrightarrow e_ih = 0$. If $\zeta \otimes h \in X \otimes H$ then
 \begin{equation*}
\xi_i \rangle \langle \xi_i \ \zeta \otimes h = \xi_i \otimes  \langle \xi_i, \zeta \rangle h = \xi_i \otimes h'; \ \text{where} \ h' = \langle \xi_i, \zeta \rangle h.
 \end{equation*}
 If $\xi_i \otimes h' = 0$ then $\left(\xi_i \otimes h', \xi_i \otimes h'\right)=\left(h', \langle \xi_i, \xi_i\rangle h'\right) = \left(h', e^*_i e_i h'\right) = 0$, and therefore $e^*_ie_ih' = e_ih' = 0$.
 \end{proof}

\section{Infinite noncommutative covering projections}
\paragraph{} In case of commutative $C^*$-algebras definition \ref{inf_galois_defn} supplies algebraic formulation of infinite covering projections of topological spaces. However I think that above definition is not a quite good analogue of noncommutative covering projections. Noncommutative algebras contains inner automorphisms. Inner automorphisms are rather gauge transformations \cite{gross_gauge} than geometrical ones. So I think that inner automorphisms should be excluded. Importance of outer automorphisms was noted by  Miyashita \cite{miyashita_fin_outer_gal,miyashita_infin_outer_gal}. Example 3.9 from \cite{ivankov:nc_cov_k_hom} also proves that inner automorphisms should be excluded. It is reasonably take to account outer automorphisms only. I have set more strong condition.
\begin{defn}\label{gen_in_def}\cite{rieffel_finite_g}
Let  $A$ be $C^*$-algebra. A *-automorphism $\alpha$ is said to be {\it generalized inner} if it is given by conjugating with unitaries from multiplier algebra $M(A)$.
\end{defn}
\begin{defn}\label{part_in_def}\cite{rieffel_finite_g}
Let  $A$ be $C^*$ - algebra. A *- automorphism $\alpha$ is said to be {\it partly inner} if its restriction to some non-zero $\alpha$-invariant two-sided ideal is generalized inner. We call automorphism {\it purely outer} if it is not partly inner.
\end{defn}
Instead definitions \ref{gen_in_def}, \ref{part_in_def} following definitions are being used.
\begin{defn}
Let $\alpha \in \mathrm{Aut}(A)$ be an automorphism. A representation $\rho : A\rightarrow B(H)$ is said to be {\it $\alpha$ - invariant} if a representation $\rho_{\alpha}$ given by
\begin{equation*}
\rho_{\alpha}(a)= \rho(\alpha(a))
\end{equation*}
is unitary equivalent to $\rho$.
\end{defn}
\begin{defn}
Automorphism $\alpha \in \mathrm{Aut}(A)$ is said to be {\it strictly outer} if for any $\alpha$- invariant representation $\rho: A \rightarrow B(H) $, automorphism $\rho_{\alpha}$ is not a generalized inner automorphism.
\end{defn}
\begin{defn}\label{nc_fin_cov_pr_defn}
A Galois quadruple  $\left(A, \widetilde{A}, _{\widetilde{A}}X_A, G\right)$  is said to be a {\it noncommutative infinite
covering projection} if action of $G$ on $\widetilde{A}$ is strictly outer. Any {\it finite covering projection} is a particular case of infinite one where $G$ is finite.
\end{defn}
\begin{exm}\label{unconn_exm} {\it Boring example}. Let $A$ be a separable $C^*$-algebra, and let $G$ be an arbitrary finite or countable discrete group. Let $X \subset \mathrm{Map}(G, A)$ such that 
\begin{equation*}
	X = \left\{ f \in \mathrm{Map}(G, A) \ | \ \sum_{g\in G} \|f(g)\|^2 < \infty\right\}.
\end{equation*}
 Also $X$ is an $A$-rigged space such that an $A$-valued form is given by
\begin{equation*}
\langle f, h \rangle_X = \sum_{g \in G} f^*(g) h(g).
\end{equation*}
Let $\widetilde{A}\subset \mathrm{Map}(G, A)$ be such that for any $f \in \widetilde{A}$ and $\varepsilon > 0$ there are only finitely many elements $g \in G$ such that $\|f(g)\| > \varepsilon$. There is a natural structure of $C^*$-algebra on $\widetilde{A}$ given by
\begin{equation*}
(fh)(g)=f(g)h(g); \ \forall g \in G, \ \forall f, h \in \widetilde{A},
\end{equation*}
\begin{equation*}
f^*(g)=\left(f(g)\right)^*; \ \forall g \in G, \ \forall f \in \widetilde{A}.
\end{equation*}
 There is the natural action of $G$ on both $X$ and $\widetilde{A}$.
It is easy to show that  $\left(A, \widetilde{A}, _{\widetilde{A}}X_A, G\right)$  is a noncommutative infinite
covering projection. Since $G$ is arbitrary this example is boring.
\end{exm}
\begin{rem}
If $A=C_0(\mathcal{X})$ then example \ref{unconn_exm} corresponds to the natural covering projection
\begin{equation*}
\bigsqcup_{g\in G} \mathcal{X} \to \mathcal{X}.
\end{equation*}
\end{rem}
\begin{defn}
A ring is said to be {\it irreducible } if it is not a direct sum of more than one ring. A Galois quadruple  $\left(A, \widetilde{A}, _{\widetilde{A}}X_A, G\right)$ is said to be {\it irreducible} if both $A$ and $\widetilde{A}$ are irreducible.
\end{defn}

\section{Examples of infinite covering projections}
\subsection{Infinite covering projections of  locally compact topological spaces}\label{comm_exm}

\paragraph{} Let $\widetilde{\mathcal{X}}$ and $\mathcal{X}$ be locally compact topological spaces, $p : \widetilde{\mathcal{X}} \to \mathcal{X}$ is a regular covering projection such that the group of covering transformations $G = G\left(\widetilde{\mathcal{X}}|\mathcal{X}\right)$ is finite or countable. Let $I$ be a finite or countable set of indices such that there is a locally finite \cite{munkres:topology}  covering  $\mathcal{U}_i \subset \mathcal{X}$ ($i \in I$) of $\mathcal{X}$ ($\mathcal{X} = \bigcup \mathcal{U}_i$) by connected open subsets such that $p^{-1}(\mathcal{U}_i)$ ($\forall i\in I$) is a disjoint union of naturally homeomorphic to $\mathcal{U}_i$ sets. Let $1_{M(C_0(\mathcal{X}))} = \sum_{i\in I}^{}a_i$ be a partition of unity dominated by $\{\mathcal{U}_i\}$ (See \cite{munkres:topology}). The family $e_i = \sqrt{a_i}$  satisfies condition (\ref{1_mb}), i.e.
\begin{equation*}\label{e_comm}
1_{M(C_0(\mathcal{X}))} = \sum_{i \in I}^{} e^*_ie_i.
\end{equation*}
Select a connected component $\mathcal{V}_i \subset \pi^{-1}(\mathcal{U}_i)$ for any $i\in I$ and $\mathcal{V}_i \bigcap g \mathcal{V}_i = \emptyset$. Let $\xi_i \in C_0(\widetilde{\mathcal{X}})$ is such that
\begin{equation}\label{comm_1}
\xi_i(x)=\left\{
\begin{array}{c l}
    e_i\left(p\left(x\right)\right) & x \in \mathcal{V}_i \\
   0 & x \notin \mathcal{V}_i
\end{array}\right.
\end{equation}
It is easy to check that
\begin{equation}\label{comm_2}
1_{M(C_0(\widetilde{\mathcal{X}}))} = \sum_{g\in G}^{} \sum_{i\in I}^{}g\xi^*_i g\xi_i.
\end{equation}
From  $\mathcal{V}_i \bigcap g \mathcal{V}_i = \emptyset$ it follows that
\begin{equation}\label{comm_ort}
\langle g\xi_i, \xi_i\rangle_X=0, \  \text{for any nontrivial} \ g \in G.
\end{equation}

For any $\xi_i$ ($i\in I$) and any $\eta \in C_0(\widetilde{\mathcal{X}})$ there is an unique $b \in C_0(\mathcal{X})$ such that $\xi_i\eta = \xi_i b$. Denote by $\langle \xi_i, \eta \rangle \stackrel{\text{def}}{=} e^*_ib\in C_0(\mathcal{X})$, $\langle \eta, \xi_i \rangle \stackrel{\text{def}}{=} \langle \xi_i, \eta \rangle^*$ Let us define a subspace  $X \subset C_0(\widetilde{\mathcal{X}})$ such that for any $\zeta \in X$ the series
\begin{equation}\nonumber
\sum_{g\in G}\sum_{i\in I}^{}\langle \zeta, g\xi_i \rangle \langle g\xi_i, \zeta \rangle
\end{equation}
is norm convergent. Define scalar product $\langle \xi, \zeta\rangle_X$ on $X$ such that
\begin{equation}\nonumber
\langle \xi, \zeta\rangle_X = \sum_{g\in G}\sum_{i\in I}^{}\langle \xi, g\xi_i \rangle \langle g\xi_i, \zeta \rangle
\end{equation}
Natural action $G$  on $C_0(\widetilde{\mathcal{X}})$, induces action of $G$ on $X$, so $X$ is
 From (\ref{comm_2}),(\ref{comm_ort}) it follows that  $_{C_0(\widetilde{\mathcal{X}})}X_{C_0(\mathcal{X})}$  is a $G$ - Galois $C_0(\mathcal{X})$-rigged $C_0(\widetilde{\mathcal{X}})$-module. $C_0(\mathcal{X})$ is the subordinated to $\{\xi_i\}_{i \in I}$ algebra.
Since $C_0(\widetilde{\mathcal{X}})$ is commutative any *-automorphism of $C_0(\widetilde{\mathcal{X}})$ is strictly outer. So any topological infinite covering projection corresponds to an algebraic one.

\subsection{Infinite covering projection of continuous trace $C^*$-algebras}\label{cont_tr_exm}

\begin{empt}\label{irreducible_corr}{\it Irreducible representations of noncommutative covering projections}. Let $\left(A, \widetilde{A}, _{\widetilde{A}}X_A, G\right)$ be an infinite noncommutative covering projection. Let $\rho:  \widetilde{A} \to \mathcal{B}(H)$ be an irreducible representation.
Let $g \in G$ and $\rho_g:  A \to\mathcal{B}(H)$ be such  that
\begin{equation}\nonumber
\rho_g (a)= \rho(ga).
\end{equation}
So there is the action of $G$ on $\widehat{\widetilde{A}}$ such that
\begin{equation}\label{action_of_g_on_spectrum}
g \mapsto (\rho \mapsto \rho_g); \ \forall g \in G, \forall \rho \in \widehat{\widetilde{A}}.
\end{equation}

Let $I$ be a set such that $\mathrm{card} \ I=\mathrm{card} \ G$, and let $G=\bigsqcup_{i\in I}\{g_i\}$, ($g_i \in G$). Let $H^I = \oplus_{i \in I}H$ be a Hilbert direct sum.  Let us define action of $\sigma: G \times I \to I$ such that $\sigma(g, i) = j \Leftrightarrow  g_j = gg_i$.
Let $\rho_{\oplus} = \oplus_{g \in G} \rho_g: \widetilde{A}\to B (H^I)$ be such that
\begin{equation}\label{a_act_irr_sum}
\rho_{\oplus} (a)\left(h_{i_1}, ..., h_{i_n}, ...\right)= \left(\rho_{g_{i_1}}(a)h_{i_1}, ...,  (\rho_{g_{i_n}}(a)h_{i_n}, ...\right); \ (i_1,...,i_n,...\in I).
\end{equation}
Let us define such linear action of $G$ on $H^I$ that
\begin{equation}\label{g_act_irr_sum}
g \left(h_{i_1}, ..., h_{i_n}, ...\right)  = \left(h_{\sigma(g^{-1}, i_1)}, ...,h_{\sigma(g^{-1}, i_n)}, ...\right).
\end{equation}
Let denote $ah = \rho_{\oplus}(a)h$; $\forall h \in H^I, \forall a \in \widetilde{A}$. From (\ref{a_act_irr_sum}), (\ref{g_act_irr_sum}) it follows that
\begin{equation}\nonumber
g(ah)=(ga)(gh); \ \forall a\in A, \ \forall g\in G, \ \forall h\in H^I,
\end{equation}
i.e. $H^I \in \mathbf{Herm}(\widetilde{A})^G$. The space $_{\widetilde{A}}X_A$ is $G$-Galois $A$-rigged $\widetilde{A}$-module, therefore equivariant representation $\rho_{\oplus}$  defines an unique representation $\eta: A \to B(K)$, where $K = \left(H^I\right)^G \approx H$. If $\eta$ is not an irreducible then there is a nontrivial Hermitian
$A$-submodule $N  \varsubsetneq K$. From $ \mathbf{Herm}(\widetilde{A})^G  \approx  \mathbf{Herm}(A)$ it follows that $X \otimes_{A} N \varsubsetneq H^I$ is a nontrivial $\widetilde{A}$ - submodule. If we identify $H$ with first summand of $H^I=\oplus_{i \in I}H$ then $(\widetilde{A} \otimes_{A} K) \cap H \varsubsetneq  H$ is a nontrivial $A$ - submodule. This fact contradicts with that $\rho$ is irreducible. So $\eta$ is
an irreducible representation. In result we have the natural map
\begin{equation}\label{spectrum_galois_map}
\hat f : \widehat{\widetilde{A}} \to \hat A, \ (\rho \mapsto \eta)
\end{equation}
and from $\hat f(\rho) = \hat f(\rho_g)$; $ \ \forall g \in G$ it follows that
\begin{equation}\label{spectrum_galois_quot}
\hat{A} \approx \widehat{\widetilde{A}} / G.
\end{equation}

\end{empt}

\begin{empt} Let $A$ be a $C^*$-algebra. For each $x\in A_+$ the (canonical) trace of $\pi(x)$ depends only on the equivalence class of an irreducible representation $\pi: A\to B(H)$, so that we may define a function $\hat x : \hat A \to [0,\infty]$ by $\hat x(t)=\mathrm{Tr}(\pi(x))$ whenever $\left(\pi: A\to B(H)\right)\in t$. From proposition 4.4.9 \cite{pedersen:ca_aut} it follows that $\hat x$ is lower semicontinuous function in Jacobson topology.
\end{empt}
\begin{defn}\label{continuous_trace_c_a_defn}\cite{pedersen:ca_aut} We say that element $x\in A$ has {\it continuous trace} if $\hat x \in C_b(\hat A)$. We say that $C^*$-algebra has {\it continuous trace} if a set of elements with continuous trace is dense in $A$.
\end{defn}

\begin{defn}\label{abelian_element_defn}\cite{pedersen:ca_aut}
A positive element in $C^*$ - algebra $A$ is {\it abelian} if subalgebra $xAx \subset A$ is commutative.
\end{defn}
\begin{defn}\cite{pedersen:ca_aut}
We say that a $C^*$-algebra $A$ is of type $I$ if each non-zero quotient of $A$ contains non-zero
abelian element. If $A$ is even generated (as $C^*$-algebra) by its abelian elements we say
that it is of type $I_0$.
\end{defn}

\begin{prop}\label{abelian_element_proposition}\cite{pedersen:ca_aut}
A positive element $x$ in $C^*$-algebra $A$ is abelian if $\mathrm{dim}\pi(x) \le 1$ for every irreducible representation $\pi:A \to B(H)$.
\end{prop}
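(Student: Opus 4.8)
The plan is to prove the (only non-trivial) implication: if $\dim\pi(x)\le 1$ for every irreducible $\pi$, then $xAx$ is commutative. The key idea is that this is really a pointwise statement about each irreducible representation, and the rank-one hypothesis trivialises it there. First I would invoke the standard fact that the family of all irreducible representations of a $C^*$-algebra is faithful (equivalently, the pure states separate points; see \cite{pedersen:ca_aut}): an element $w\in A$ vanishes as soon as $\pi(w)=0$ for every irreducible $\pi$. Hence it suffices to check, for all $a,b\in A$ and every irreducible $\pi:A\to B(H)$, that $\pi(xax)$ and $\pi(xbx)$ commute; commutativity of $xAx$ then follows, and passes to the closure $\overline{xAx}$ by continuity of multiplication.

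So fix an irreducible $\pi:A\to B(H)$. Since $x\ge 0$, the operator $\pi(x)$ is positive, and by hypothesis its range has dimension at most one; thus either $\pi(x)=0$, or $\pi(x)=t\,p$ with $t=\|\pi(x)\|>0$ and $p$ the rank-one projection onto $\overline{\pi(x)H}$, which we write as $p=\xi\rangle\langle\xi$ for a unit vector $\xi\in H$. In either case, for every $c\in A$,
\[
\pi(xcx)=\pi(x)\,\pi(c)\,\pi(x)=t^{2}\,p\,\pi(c)\,p=t^{2}\langle\xi,\pi(c)\xi\rangle\,p,
\]
a scalar multiple of the single projection $p$ (and $\pi(xcx)=0$ when $\pi(x)=0$). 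In particular $\pi(xax)$ and $\pi(xbx)$ are both scalar multiples of $p$, so they commute.

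Putting the two steps together, $xax\cdot xbx - xbx\cdot xax$ lies in $\ker\pi$ for every irreducible $\pi$, hence is $0$; since every element of $xAx$ is of the form $xax$, the algebra $xAx$ — and therefore $\overline{xAx}$ — is commutative, i.e.\ $x$ is abelian. The computation itself is routine; the two points that deserve care are (i) the reduction, i.e.\ using faithfulness of the irreducible representations to reduce a global identity in $A$ to identities in each $B(H)$, and (ii) reading $\dim\pi(x)\le 1$ correctly as ``$\pi(x)$ has at most one-dimensional range'', which is exactly what makes $p\,\pi(c)\,p$ a scalar multiple of $p$. I do not anticipate a substantive difficulty beyond these.
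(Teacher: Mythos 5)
The paper itself gives no proof of this proposition: it is quoted verbatim from Pedersen \cite{pedersen:ca_aut} (it is one of the background facts used in Section 7.2), so there is nothing internal to compare your argument against. Your proof is correct and is essentially the standard one. The two pillars are sound: the atomic representation (direct sum of all irreducible representations) of a $C^*$-algebra is faithful, so an identity such as $[xax,xbx]=0$ may be checked in each irreducible $\pi$ separately; and a positive operator whose range is at most one-dimensional is $t\,\xi\rangle\langle\xi$ with $t\ge 0$, whence $\pi(xcx)=\pi(x)\pi(c)\pi(x)=t^{2}\langle\xi,\pi(c)\xi\rangle\,\xi\rangle\langle\xi$ is a scalar multiple of one fixed rank-one projection, and any two such operators commute. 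Note only that the paper's Definition of ``abelian'' asks for commutativity of $xAx$ itself, so your closing remark about passing to the closure $\overline{xAx}$ is superfluous (though harmless); also, the proposition as stated claims only the ``if'' direction, so no converse needs to be addressed. Pedersen's own treatment reaches the same conclusion by observing that the irreducible representations of the hereditary subalgebra generated by $x$ are all at most one-dimensional; your pointwise computation is a slightly more elementary route to the same fact.
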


\begin{thm}\label{peder_id_thm} (Theorem 5.6 \cite{pedersen:ca_aut}) For each $C^*$ - algebra $A$ there is a dense hereditary ideal $K(A)$,
which is minimal among dense ideals.

\end{thm}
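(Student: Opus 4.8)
I would follow Pedersen's construction. For $x\in A_+$ and $\varepsilon>0$ let $(x-\varepsilon)_+$ denote the image of $x$ under continuous functional calculus with $t\mapsto\max(t-\varepsilon,0)$; these ``truncations'' are the building blocks, and I take $K(A)$ to be the algebraic two-sided ideal of $A$ generated by all of them. The decisive feature of a truncation is that it carries a local unit: for continuous $g$ with $g\equiv 1$ on $[\varepsilon,\infty)$ and $g$ vanishing near $0$ one has $g(x)(x-\varepsilon)_+=(x-\varepsilon)_+$, and in fact $(x-\varepsilon)_+=d\,(x-\varepsilon/2)_+\,d^{*}$ for a suitable $d\in A$. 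The plan is then to check that this $K(A)$ is (i) dense, (ii) a self-adjoint, hereditary, two-sided ideal, and (iii) contained in every dense ideal.

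Density is the easy step: $\|x-(x-\varepsilon)_+\|\le\varepsilon$ for each $x\in A_+$, so $A_+\subseteq\overline{K(A)}$ and hence $\overline{K(A)}=A$, since $A=\operatorname{span}A_+$. Self-adjointness is immediate (each generator is self-adjoint), and being an algebraic two-sided ideal is built into the definition. That $K(A)$ is hereditary — $0\le a\le b\in K(A)\Rightarrow a\in K(A)$ — and the identification of $K(A)_+:=K(A)\cap A_+$ with a manageable positive cone of finite sums of ``locally unital'' positive elements, require further functional-calculus manipulations (passing through truncations $(a-\delta)_+=e\,b\,e^{*}$ of $a$ itself, controlling sums, conjugating generators); this is the technical, though routine, part of the argument, carried out in \cite{pedersen:ca_aut}.

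The heart of the theorem is minimality. Let $J\subseteq A$ be an arbitrary dense two-sided ideal. Because $J$ is an ideal and $K(A)$ is generated by the truncations, it suffices to show $(x-\varepsilon)_+\in J$ for each $x\in A_+$ and $\varepsilon>0$. Using density of $J$, pick $w\in J$ with $\|w-x^{1/2}\|$ so small that $y:=w^{*}w\in J_+$ satisfies $\|y-x\|<\varepsilon$. Now invoke the key functional-calculus lemma (due to Pedersen, in the form later popularised by R\o rdam): if $u,v\in A_+$ and $\|u-v\|<\delta$ then $(u-\delta)_+=d\,v\,d^{*}$ for some $d\in A$. Its proof rests on the inequality $(u-\delta)_+^{1/2}\,v\,(u-\delta)_+^{1/2}\ \ge\ (u-\delta)_+^{1/2}(u-\delta)(u-\delta)_+^{1/2}=(u-\delta)_+^{2}$ (valid because $v\ge u-\delta$), which both produces the candidate $d$ via a resolvent limit and, crucially, supplies the spectral gap making that limit converge in norm rather than merely strongly, so that $d$ lands in $A$ and not just in $A^{**}$. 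Applying the lemma with $u=x$, $v=y$, $\delta=\varepsilon$ gives $(x-\varepsilon)_+=d\,y\,d^{*}\in J$, since $y\in J$ and $J$ is an ideal. Hence $K(A)\subseteq J$, so $K(A)$ is the smallest dense ideal, completing the proof.

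The step I expect to be the genuine obstacle is exactly this minimality argument, and within it the functional-calculus lemma $(u-\delta)_+=d\,v\,d^{*}$: the mere norm estimate $\|u-v\|<\delta$ must be upgraded to an exact operator factorisation with coefficient $d\in A$, and that is precisely where the $\delta$-truncation in the definition of $K(A)$ is indispensable — no such factorisation holds for $u$ itself, which is also why $K(A)$ is genuinely smaller than $A$ in the non-unital case. Density and the ideal/hereditary axioms, by contrast, are routine — if somewhat laborious — functional-calculus bookkeeping.
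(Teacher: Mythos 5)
The paper does not actually prove this statement: it is quoted verbatim as Theorem 5.6 of Pedersen's book \cite{pedersen:ca_aut}, so there is no internal argument to compare yours against. Your sketch is essentially Pedersen's construction, presented with an equivalent definition: you take $K(A)$ to be the algebraic ideal generated by the truncations $(x-\varepsilon)_+$, rather than Pedersen's hereditary cone of positive elements dominated by finite sums of elements with local units; the two give the same ideal. Your density argument is fine, and your minimality step is correct and is indeed the heart of the matter: $y=w^{*}w\in J_+$ because $J$ is a (left) ideal, and the factorization lemma ($\|u-v\|<\delta$ implies $(u-\delta)_+=d\,v\,d^{*}$ with $d\in A$) puts every generator of $K(A)$ into every dense ideal.

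The one place where your write-up is weaker than you claim is hereditariness. With your definition it is not routine bookkeeping, and the parenthetical hint you give --- passing to truncations $(a-\delta)_+=e\,b\,e^{*}$ of $a$ itself --- cannot close the argument: it only shows $(a-\delta)_+\in K(A)$ for every $\delta>0$, which holds for every positive $a\in A$ whatsoever (the truncations are generators by definition), and says nothing about $a\in K(A)$. The actual mechanism is the local-unit structure of $K(A)$: one shows that every $b\in K(A)_+$ admits $e\in K(A)_+$ with $0\le e\le 1$ (in the unitization) and $eb=b$; then $0\le a\le b$ gives $(1-e)a(1-e)\le(1-e)b(1-e)=0$, hence $a^{1/2}=a^{1/2}e$ and $a=ae\in K(A)$. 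Producing such local units for arbitrary positive elements of your $K(A)$ --- not just for a single generator, where $g(x)$ with $g\equiv 1$ on $[\varepsilon,\infty)$ and $g$ vanishing near $0$ works and does lie in $K(A)$ --- is precisely what Pedersen's cone description is designed to handle. So either adopt his definition of $K(A)_+$, for which hereditariness is immediate and minimality follows by the same factorization trick you use, or prove the local-unit property directly; but do not present hereditariness as a consequence of truncating $a$.
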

\begin{defn}
The ideal $K(A)$ from the theorem \ref{peder_id_thm} is said to be the {\it Pedersen ideal of $A$}. Henceforth Pedersen ideal shall be denoted by $K(A)$.
\end{defn}
\begin{prop}\label{continuous_trace_c_a_proposition}\cite{pedersen:ca_aut}
Let $A$ be a $C^*$ - algebra with continuous trace. Then
\begin{enumerate}
\item $A$ is of type $I_0$;
\item $\hat A$ is a locally compact Hausdorff space;
\item For each $t \in \hat A$ there is an abelian element $x \in A$ such that $\hat x \in K(\hat A)$ and $\hat x(t) = 1$.
\end{enumerate}
The last condition is sufficient for $A$ to have continuous trace.
\end{prop}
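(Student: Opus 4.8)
The plan is to deduce (1)--(3) from one regularity fact about traces of functional calculus, and to prove the converse by a density argument.

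\emph{A continuity lemma.} I would first establish: if $x\in A_{+}$ has continuous trace and $f\in C([0,\|x\|])$ with $f(0)=0$, then $\widehat{f(x)}\in C_{b}(\hat A)$, and $cf(x)c^{*}$ has continuous trace for every $c\in A$. Lower semicontinuity of these trace functions is the lower semicontinuity of $\hat y$ for $y\in A_{+}$ quoted above (proposition 4.4.9 of \cite{pedersen:ca_aut}). For upper semicontinuity of $\widehat{f(x)}$ note that when $0\le f(s)\le s$ on $[0,\|x\|]$ the element $x-f(x)$ lies in $A_{+}$, so $\widehat{f(x)}=\hat x-\widehat{x-f(x)}$ is (continuous) minus (lower semicontinuous); this handles $f(s)=(s-\varepsilon)_{+}$ and $f(s)=s-(s-\varepsilon)_{+}$, and a general $f$ vanishing to first order at $0$ is reduced to polynomials $sq(s)$ through the bound $|\mathrm{Tr}\,f(\pi_{t}(x))-\mathrm{Tr}\,(\pi_{t}(x)q(\pi_{t}(x)))|\le\|f(s)/s-q(s)\|_{\infty}\hat x(t)$. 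For $cf(x)c^{*}$, put $w=cf(x)^{1/2}$, so $cf(x)c^{*}=ww^{*}$, $\mathrm{Tr}\,\pi_{t}(ww^{*})=\mathrm{Tr}\,\pi_{t}(w^{*}w)$, and $w^{*}w\le\|c\|^{2}f(x)$; then $\widehat{cf(x)c^{*}}=\|c\|^{2}\widehat{f(x)}-\widehat{f(x)^{1/2}(\|c\|^{2}-c^{*}c)f(x)^{1/2}}$ is upper semicontinuous, hence continuous.

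\emph{Part (2).} Each positive continuous--trace element has finite, hence trace--class, hence compact, image in every irreducible representation, and such elements are dense, so every element of $A$ has compact image: $A$ is liminal. Consequently $\pi\mapsto\ker\pi$ is a homeomorphism $\hat A\cong\mathrm{Prim}\,A$ and each $\ker\pi$ is a maximal ideal. For $t_{1}\ne t_{2}$ the maximal ideals $\ker\pi_{t_{1}},\ker\pi_{t_{2}}$ are incomparable, and compressing continuous--trace elements of $A$ by an approximate unit of $\ker\pi_{t_{2}}$ (using the continuity lemma) produces a continuous--trace $x\in A_{+}$ with $\pi_{t_{2}}(x)=0\ne\pi_{t_{1}}(x)$; then $\hat x$ is continuous with $\hat x(t_{2})=0<\hat x(t_{1})$, so super-- and sublevel sets of $\hat x$ separate $t_{1}$ and $t_{2}$. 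Thus $\hat A$ is Hausdorff, and since the spectrum of a $C^{*}$-algebra is locally quasi-compact, a quasi-compact Hausdorff space being compact, $\hat A$ is locally compact.

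\emph{Parts (1) and (3).} Fix $t_{0}\in\hat A$ and $\pi_{0}\in t_{0}$. Since $\mathcal K(H_{0})\subseteq\pi_{0}(A)$ and continuous--trace elements are dense, pick such an $x\ge 0$ with $\|\pi_{0}(x)-q\|<1/4$ for a rank--one projection $q$; then the top eigenvalue of $\pi_{0}(x)$ is simple and separated from the rest of its spectrum by a gap, and for $g\in C([0,\|x\|])$ equal to $1$ at that eigenvalue, to $0$ near $0$, with transition inside the gap, $p:=g(x)\in A$ has $\hat p\in C_{b}(\hat A)$, $\hat p(t_{0})=1$, and $\pi_{0}(p)$ a rank--one projection. \emph{The main obstacle is to show that this spectral gap persists to all $\pi_{t}$ with $t$ in a neighbourhood $U$ of $t_{0}$}, so that $\pi_{t}(p)$ is a projection of rank $\le 1$ for every $t\in U$; this is a perturbation/semicontinuity argument, equivalently a count of eigenvalue layers obtained by comparing the continuous functions $\widehat{(x-s)_{+}}$ for thresholds $s$ in the gap. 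Granting it, use local compactness of $\hat A$ and the Dauns--Hofmann isomorphism $Z(M(A))\cong C_{b}(\hat A)$ to choose $h\in K(\hat A)$ with $0\le h\le 1$, $h=1$ near $t_{0}$, $\mathrm{supp}\,h\subseteq U$; then $a:=hp\in A$ is abelian (its image in each irreducible representation is a nonnegative scalar times a projection of rank $\le 1$, so proposition \ref{abelian_element_proposition} applies), $\hat a=h\hat p\in K(\hat A)$, and $\hat a(t_{0})=1$. This gives (3); and since such an $a$, nonzero at $t_{0}$, exists for every $t_{0}$, no primitive ideal contains all abelian elements, so $A$ is generated by its abelian elements, i.e.\ of type $I_{0}$ -- this is (1).

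\emph{The converse.} Assume (3). Each $a_{t_{0}}$ is abelian with $\widehat{a_{t_{0}}}\in K(\hat A)\subseteq C_{b}(\hat A)$, hence of continuous trace, and $\pi_{t_{0}}(a_{t_{0}})\ne 0$. By the continuity lemma each $ba_{t_{0}}b^{*}$ ($b\in A$) is abelian of continuous trace, and by polarization the two--sided ideal $\mathfrak a$ they span consists of elements of finite rank in every irreducible representation and is contained in no primitive ideal, hence is dense in $A$. Since $\mathrm{Tr}$ is linear on trace--class operators, $z\mapsto\hat z$ extends to $\mathfrak a$ with values in $C_{b}(\hat A)$, so for $z\in\mathfrak a$ the positive element $z^{*}z\in\mathfrak a$ has $\widehat{z^{*}z}\in C_{b}(\hat A)$. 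As $\{z^{*}z:z\in\mathfrak a\}$ is dense in $A_{+}$, the continuous--trace elements are dense in $A$, i.e.\ $A$ has continuous trace.
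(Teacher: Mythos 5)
The paper gives no proof of this proposition at all---it is quoted verbatim from \cite{pedersen:ca_aut} (Pedersen's Proposition 6.1.11)---so your argument can only be measured against the standard proof there, and in outline you have reconstructed exactly that argument: continuity of $\widehat{f(x)}$ and of compressions $\widehat{cf(x)c^*}$, Hausdorffness of $\hat A$ by separating points with continuous trace functions of elements vanishing in one kernel, abelian elements from a spectral cut-off of an element whose image approximates a rank-one projection, and the converse by showing that condition (3) generates a dense ideal of continuous-trace elements. The converse and part (2) as you present them are sound (for (2) one only has to note that $\pi_{t_1}(\ker\pi_{t_2})$ is a nonzero closed ideal of $\mathcal{K}(H)$, hence all of it, so an approximate unit of $\ker\pi_{t_2}$ converges strongly to $1$ under $\pi_{t_1}$ and some compression $u_\lambda z u_\lambda$ survives at $t_1$).

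There are, however, two defects. The genuine one is the step you yourself flag and then skip (``granting it''): that $\pi_t(p)$ has rank at most one for all $t$ in a neighbourhood of $t_0$. This is the heart of parts (1) and (3)---without it nothing makes $hp$ abelian---so as written those parts are incomplete. It is completable precisely by the device you name: for $1/4<s<s'<3/4$ the functions $\widehat{(x-s)_+}$ and $\widehat{(x-s')_+}$ are continuous by your lemma, and for every irreducible $\pi_t$ one has $\mathrm{Tr}\bigl((\pi_t(x)-s)_+\bigr)-\mathrm{Tr}\bigl((\pi_t(x)-s')_+\bigr)\ge (s'-s)\,N_{s'}(t)$, where $N_{s'}(t)$ is the number of eigenvalues of $\pi_t(x)$ that are $\ge s'$; at $t_0$ the left-hand side equals $s'-s$, so on the open set where it is $<2(s'-s)$ one gets $N_{s'}(t)\le 1$, and taking $g$ supported in $(s',\infty)$ with $g\equiv 1$ on $[3/4,\|x\|]$ makes $\pi_t(p)=g(\pi_t(x))$ of rank $\le 1$ there; note you never need $\pi_t(p)$ to be a projection, only of rank $\le 1$, so ``persistence of the spectral gap'' can be dispensed with entirely, and $a=hp$ is then abelian by Proposition \ref{abelian_element_proposition} with $\hat a=h\hat p\in K(\hat A)$, $\hat a(t_0)=1$. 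The smaller defect is that your continuity lemma is false as stated: for a general continuous $f$ with $f(0)=0$, e.g.\ $f(s)=\sqrt{s}$, the trace $\mathrm{Tr}\,f(\pi_t(x))$ need not even be finite when $\hat x$ is bounded. The cases you actually prove ($0\le f\le\mathrm{id}$, $f$ vanishing to first order at $0$, and the compressions $cf(x)c^*$) are correct and are all that the rest of the argument uses, so the statement should simply be restricted to them.
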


\begin{rem}\label{ctr_is_ccr}
From \cite{dixmier_tr}, Proposition 10, II.9 it follows that a continuous trace
$C^*$-algebra $A$ is always a $CCR$-algebra, i.e. for every irreducible
representation $\pi:A \to B(H)$ following condition hold
\begin{equation}\label{ccr_compact}
\pi(A)\approx \mathcal{K}
\end{equation}

\end{rem}

\begin{defn}\label{hull_top_defn}
Let $A$ be a $C^*$-algebra, and $\mathrm{Prim}(A)$ is a set of primitive ideals. For any subset $F \in A$ there exist subset $F^-$ such that

\begin{equation}\nonumber
F^- = \{t \in \mathrm{Prim}(A) : F \in t\}.
\end{equation}
$\mathrm{Prim}(A)$ is topological space such that for any closed subset $X \in \mathrm{Prim}(A)$, $\exists F \subset A, \ X = F^-$.

\end{defn}

\begin{prop}\label{prop_act_grp}\cite{bourbaki_sp:gt}
If a topological group $G$ acts properly on a topological space then orbit space $X/G$ is Hausdorff. If also $G$ is Hausdorff, then $X$ is Hausdorff.
\end{prop}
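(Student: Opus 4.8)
The plan is to work with Bourbaki's formulation of a \emph{proper} action, for which the map
$$\theta : G \times X \to X \times X, \qquad \theta(g,x) = (gx, x)$$
is a proper map; I will use only the consequence that every proper map is closed. Write $\pi : X \to X/G$ for the orbit projection, and for a space $Z$ let $\Delta_Z = \{(z,z) : z \in Z\}$ denote the diagonal in $Z \times Z$. The key observations to record first are: (i) $\pi$ is an open map, because for open $U \subseteq X$ one has $\pi^{-1}(\pi(U)) = \bigcup_{g \in G} gU$, a union of open sets since each $x \mapsto gx$ is a homeomorphism; hence $\pi \times \pi : X \times X \to (X/G) \times (X/G)$ is an open continuous surjection, and therefore a quotient map; (ii) a space $Z$ is Hausdorff if and only if $\Delta_Z$ is closed in $Z \times Z$; and (iii) the orbit equivalence relation $R = \{(x,y) \in X \times X : y \in Gx\}$ is exactly the image $\theta(G \times X)$, and also exactly $(\pi \times \pi)^{-1}(\Delta_{X/G})$.

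For the first assertion, since $\theta$ is proper it is closed, so $R = \theta(G \times X)$ is closed in $X \times X$ (image of the whole space, which is closed in itself). Because $\pi \times \pi$ is a quotient map and $(\pi \times \pi)^{-1}(\Delta_{X/G}) = R$ is closed, $\Delta_{X/G}$ is closed in $(X/G) \times (X/G)$, and by (ii) the orbit space $X/G$ is Hausdorff. For the second assertion, suppose in addition that $G$ is Hausdorff; then $\{e\}$ is closed in $G$, hence $\{e\} \times X$ is closed in $G \times X$, and applying the closed map $\theta$ gives that $\theta(\{e\} \times X) = \Delta_X$ is closed in $X \times X$, so $X$ is Hausdorff, again by (ii).

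I do not anticipate a genuine obstacle; the only point needing care is fixing which notion of properness is meant. With Bourbaki's definition (the map $\theta$ is proper, in particular closed) the proof is the standard interplay between closedness of the graph of an equivalence relation, openness of the quotient map, and the diagonal criterion for Hausdorffness, as above. Were one instead to adopt a Palais-type definition (for each $x,y$ there exist neighbourhoods $U \ni x$, $V \ni y$ with $\{g \in G : gU \cap V \neq \emptyset\}$ relatively compact), the separation argument for $X/G$ would be direct but would typically force a local compactness hypothesis on $G$; to stay consistent with the cited source I would use Bourbaki's formulation throughout.
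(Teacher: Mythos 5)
The paper offers no proof of this proposition: it is quoted verbatim from Bourbaki (\cite{bourbaki_sp:gt}, Ch.~III, \S 4), so there is no internal argument to compare against. Your proof is correct and is essentially the standard Bourbaki argument: properness of $\theta(g,x)=(gx,x)$ makes the orbit relation $R=\theta(G\times X)$ closed, openness of $\pi$ makes $\pi\times\pi$ an open continuous surjection and hence a quotient map so the closed-diagonal criterion descends to $X/G$, and when $G$ is Hausdorff the closed map $\theta$ sends the closed set $\{e\}\times X$ onto $\Delta_X$, giving Hausdorffness of $X$.
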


\begin{lem}\label{finite_ctra_cov_pr_free ation}
 $\left(A, \widetilde{A}, _{\widetilde{A}}X_A, G\right)$ be an infinite noncommutative covering projection. Suppose that $A$ is separable and has continuous trace. Then $\widetilde{A}$ is also separable and has continuous trace. There is the natural (topological) covering projection $\hat f : \widehat{\widetilde{A}} \to \hat A$ of topological spaces, $G$ acts freely on $\hat A$ and there is the natural  homeomorphism $\hat A \approx \widehat{\widetilde{A}} /G$.
\end{lem}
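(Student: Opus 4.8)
The plan is to first establish that $\widetilde{A}$ has continuous trace by transporting the structure of $A$ through the Galois quadruple, and then to analyze the induced map on spectra using the machinery of \ref{irreducible_corr}. Since $A$ is separable with continuous trace, by \ref{continuous_trace_c_a_proposition} it is of type $I_0$, $\hat A$ is locally compact Hausdorff, and for each $t \in \hat A$ there is an abelian element $x \in A$ with $\hat x \in K(\hat A)$ and $\hat x(t) = 1$. I would first show $\widetilde A$ is separable: it is the subordinated algebra, generated by the countably many operators $g\xi_i\rangle\langle g\xi_i\, a$ with $a$ ranging over a countable dense subset of $A$ (or of $M(A)$ in case 1, using strict density), and $G$ and $I$ countable. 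To get continuous trace on $\widetilde A$, I would use \ref{irreducible_corr}: every irreducible representation $\rho$ of $\widetilde A$ restricts (via the equivariant direct-sum construction) to an irreducible representation $\eta$ of $A$ on $K = (H^I)^G \approx H$, giving the surjection $\hat f : \widehat{\widetilde A} \to \hat A$ with $\hat A \approx \widehat{\widetilde A}/G$ from \eqref{spectrum_galois_quot}. Pulling back an abelian element $x \in A$ with $\hat x(t)=1$ through $\varphi : A \to M(\widetilde A)$, and using the local triviality encoded in \eqref{1_mkx} and \eqref{g_ort} (the $\xi_i$ behave like a partition of unity subordinate to sets disjoint from their $G$-translates), I would produce, for each $\tilde t \in \widehat{\widetilde A}$, an element of $\widetilde A$ that is abelian near $\tilde t$ with continuous trace equal to $1$ there — e.g. $\sum_i g\xi_i\rangle\langle g\xi_i\, \varphi(x)$ for the appropriate $g$ — so that \ref{continuous_trace_c_a_proposition}(3) applies and $\widetilde A$ has continuous trace.

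Next I would promote $\hat f$ to a topological covering projection. By \ref{ctr_is_ccr} both $A$ and $\widetilde A$ are CCR, so points of the spectra are genuinely geometric. The group $G$ acts on $\widehat{\widetilde A}$ by \eqref{action_of_g_on_spectrum}, and I must check this action is free and proper. Freeness is where condition \eqref{g_ort} does the essential work: if $g\rho \cong \rho$ for some irreducible $\rho$ and nontrivial $g$, then on the ideal $\xi_i\rangle\langle\xi_i\,\widetilde A$ the translate would be unitarily implemented, contradicting $\langle g\xi_i,\xi_i\rangle_X = 0$ together with strict outerness in \ref{nc_fin_cov_pr_defn} — indeed strict outerness of the $G$-action on $\widetilde A$ is precisely designed to forbid $g\rho \cong \rho$. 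Given freeness, properness of the $G$-action on the locally compact Hausdorff space $\widehat{\widetilde A}$ follows from local triviality: each $\mathcal U$ in a covering of $\hat A$ by sets over which the bundle structure from $\{\xi_i\}$ trivializes pulls back to $\bigsqcup_{g\in G} g\mathcal V$ with the $g\mathcal V$ pairwise disjoint, exactly as in \ref{comm_exm}. Then \ref{prop_act_grp} gives that $\widehat{\widetilde A}/G$ is Hausdorff, and the quotient map $\widehat{\widetilde A} \to \widehat{\widetilde A}/G \approx \hat A$ is a covering projection. Finally $G$ acts freely on $\hat A$ — I read this as the statement that the deck group acts freely on the total space, i.e. on $\widehat{\widetilde A}$; combined with \eqref{spectrum_galois_quot} this yields the asserted homeomorphism $\hat A \approx \widehat{\widetilde A}/G$.

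The main obstacle I anticipate is the continuous-trace claim for $\widetilde A$: one must verify that the pulled-back abelian elements really are abelian (dimension $\le 1$ in every irreducible representation of $\widetilde A$, via \ref{abelian_element_proposition}) and have the required Pedersen-ideal and boundedness properties for their traces, and this requires carefully tracking how $\dim\rho(\cdot)$ behaves under the restriction $\rho \mapsto \eta$ of \ref{irreducible_corr} — in particular that $\dim\eta(x) = \dim\rho(\varphi(x))$ locally, which uses $K \approx H$ and the orthogonality \eqref{g_ort_p}. A secondary subtlety is handling case 1 versus case 2 of the subordinated-algebra definition uniformly (whether $e_i \in A$ or only $e_i \in M(A)$), since in case 1 one works with multipliers and must argue strict continuity of the relevant sums. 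The covering-projection and freeness statements, by contrast, should follow fairly mechanically once local triviality over $\hat A$ is set up, essentially mirroring the commutative computation in \ref{comm_exm}.
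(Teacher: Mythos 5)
The freeness step is where your argument has a real gap. You assert that strict outerness ``is precisely designed to forbid $g\rho \cong \rho$'' and that the orthogonality \eqref{g_ort} rules out a unitarily implemented translate on the ideal $\xi_i\rangle\langle\xi_i\,\widetilde{A}$. Neither claim works as stated. Strict outerness (definitions \ref{gen_in_def}--\ref{nc_fin_cov_pr_defn}) only forbids the automorphism from being implemented by a unitary coming from the \emph{multiplier algebra} in an invariant representation; the hypothesis $g\rho\cong\rho$ only produces some $U\in U(H)$, and in general there is no reason why $U$ should lie in the image of $U(M(\widetilde{A}))$. The paper's proof supplies exactly this missing step: continuous trace implies the CCR property (remark \ref{ctr_is_ccr}, equation \eqref{ccr_compact}), so an irreducible representation maps the algebra onto $\mathcal{K}$ and the multipliers onto $B(H)$; hence the intertwining unitary is $\rho(u)$ for a unitary multiplier $u$, the action of $g$ is generalized inner with respect to $\rho$, and strict outerness is contradicted. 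The relations \eqref{g_ort} cannot substitute for this: they hold verbatim for the noncommutative-torus quadruple \eqref{nt_inf_quadruple}, where the paper explicitly says it is unknown whether the action is strictly outer, so orthogonality of the $\xi_i$ with their translates by itself says nothing about fixed points in $\widehat{\widetilde{A}}$. Note also the order of steps: the paper proves freeness first and only then uses the resulting Hausdorffness of $\widehat{\widetilde{A}}$ to choose a neighbourhood $\mathcal{U}$ of $t$ with $\mathcal{U}\cap g\,\mathcal{U}=\emptyset$ for the continuous-trace argument, whereas your plan takes continuous trace of $\widetilde{A}$ first and freeness afterwards.

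Your continuous-trace half is essentially the paper's route (pull back an abelian element $x$ with $\hat x(s)=1$, localize it away from its $G$-translates, and invoke condition 3 of proposition \ref{continuous_trace_c_a_proposition} through the machinery of \ref{irreducible_corr}), but your concrete localizer is not quite right: the paper multiplies by $f\in K(C_0(\widehat{\widetilde{A}}))_+$ with $f(t)=1$ and $\mathrm{supp}(f)\subset\mathcal{U}$, $\mathcal{U}\cap g\,\mathcal{U}=\emptyset$, so that $\rho(fx)\neq 0$ forces $\rho_g(fx)=0$ and the bound $\dim\rho(fx)\le 1$ follows; your candidate $\sum_i g\xi_i\rangle\langle g\xi_i\,\varphi(x)$ is supported on $\bigcup_i g\mathcal{V}_i$, and while each $\mathcal{V}_i$ is disjoint from its own translates, the union need not be disjoint from \emph{its} translates, so the dimension count does not follow as written. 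You correctly flagged this verification as the main difficulty, but the fix is the paper's localized cutoff (which in turn needs the Hausdorffness obtained from freeness), so the CCR-based freeness argument is the load-bearing piece you are missing.
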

\begin{proof}
Suppose that $G$ does not act freely on $\widehat{\widetilde{ A}}$.  Then there are $x \in \widehat{\widetilde{ A}}$ and $g \in G$ such that $gx = x$. By definition \ref{nc_fin_cov_pr_defn} $g$ should be strictly outer.  Let $\rho: \widetilde{A} \to H$ be a representative of $x$. Then $\rho_g: \widetilde{A} \to H$ is also representative of $x$. So $\rho$ is unitary equivalent to $\rho_g$, i. e. there is unitary $U \in U(H)$ such that $\rho(a) = U\rho_g(a)U^*$ ($\forall a \in A)$. From \eqref{ccr_compact} it follows that $\rho(A) = \mathcal{K}$, $\rho(M(A)) = B(H)$, $\rho(U(M(A))) = U(H)$. So there is $u\in U(M(A))$ such that $\rho(u)=U$, and we have $\rho_g(a) = \rho(u)\rho(a)\rho(u^*)$. It means that $g$ is inner with respect to $\rho: \widetilde{A} \to H$, so action of $g$ is not strictly outer.  This contradiction proves that $G$ acts freely on $\widehat{\widetilde{A}}$, and $\widehat{\widetilde{A}}$ is a Hausdorff space. Let $\varphi: \widehat{\widetilde{A}} \to \hat A$ be a natural (topological) covering projection, $t\in \widehat{\widetilde{A}}$, $s = \varphi(t)\in \hat A$. Let $\mathcal{U} \subset \widehat{\widetilde{A}}$ be such that $t \in \mathcal{U}$ and $\mathcal{U} \bigcap g\mathcal{U} = \emptyset$ for any nontrivial $g\in G$. Let $f \in K(C_0(\widehat{\widetilde{A}}))_+$ be such that  $f(t)=1$ and $\mathrm{supp}(f)\subset\mathcal{U}$. From proposition \ref{continuous_trace_c_a_proposition} it follows that there is an abelian element $x \in A$ such that $\hat x \in K(\hat A)$ and $\hat x(s) = 1$. Since $x$ is abelian $\dim \ \pi(x) \le 1$ for any irreducible $\pi: A \to B(H)$. Let $y = fx \in K\left(\widetilde{A}\right)$. From $\mathcal{U} \bigcap g\mathcal{U} = \emptyset$ it follows that
\begin{equation}\label{irr_y}
\rho(y) \neq 0 \Rightarrow \rho_g(y)=0
\end{equation}
for any irreducible $\widetilde{A} \to B(H)$ and any nontrivial $g\in G$. From \eqref{irr_y}, $\dim \ \pi(x) \le 1$  and construction \ref{irreducible_corr} it follows that  $\dim \ \rho(y) \le 1$ for any irreducible $\rho: \widehat{\widetilde{A}} \to B(H)$. From $f(t)=1$, and $\hat x(s)=1$ it follows that $\hat y(t)=1$. So $\widetilde{A}$ satisfies condition 3 of proposition \ref{continuous_trace_c_a_proposition}, and therefore $\widetilde{A}$ has continuous trace.
\end{proof}

\begin{lem}\label{finite_ctra_cov_pr}
  Let $A$ be a separable $C^*$-algebra and $A$ has continuous trace. Let $\mathcal{X} \to \hat A$ be a (topological) covering projection such that $G = G\left(\mathcal{X} \ | \ \hat A\right)$ is a finite or countable group. Then there is a noncommutative covering projection $\left(A, \widetilde{A}, _{\widetilde{A}}X_A, G\right)$ such that there are a natural homeomorphism $\widehat{\widetilde{A}} \approx \mathcal{X}$ and the algebraic tensor product $C_0(\mathcal{X}) \otimes_{C_0(A)} A$ is a dense subalgebra of $\widetilde{A}$.
\end{lem}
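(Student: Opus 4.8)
The plan is to realise $\widetilde A$ as the $C_0$-section algebra of the pullback along $p$ of the $\mathcal K$-bundle underlying $A$, and to produce it concretely as a subordinated algebra by exhibiting families $\{e_i\}$ and $\{\xi_i\}$ satisfying the hypotheses of Theorem \ref{main_lem}; the identification of the spectrum and the density statement then follow from the local structure of continuous-trace algebras, and strict outerness from freeness of the deck action, exactly as in the proof of Lemma \ref{finite_ctra_cov_pr_free ation}.

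First I would fix the local data. Since $A$ has continuous trace it is of type $I_0$, $\hat A$ is locally compact Hausdorff by Proposition \ref{continuous_trace_c_a_proposition}, and, $A$ being separable, $\hat A$ is second countable, hence paracompact; moreover $A$ is locally trivial, i.e. over suitable open sets it restricts to $C_0(\mathcal U)\otimes\mathcal K$, and $Z(M(A))=C_b(\hat A)$. Choose a locally finite cover $\{\mathcal U_i\}_{i\in I}$ of $\hat A$ by relatively compact open sets each of which is evenly covered by $p$ and over which the underlying $\mathcal K$-bundle of $A$ is trivial; since we consider only regular coverings, $p$ is a principal $G$-bundle, so we may pick a sheet $\mathcal V_i\subset p^{-1}(\mathcal U_i)$ with $p^{-1}(\mathcal U_i)=\bigsqcup_{g\in G}g\mathcal V_i$ and $\mathcal V_i\cap g\mathcal V_i=\emptyset$ for nontrivial $g$. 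Take a partition of unity $\{a_i\}\subset C_c(\hat A)$ subordinate to $\{\mathcal U_i\}$ and put $e_i=\sqrt{a_i}\in C_0(\hat A)\subset Z(M(A))$; then $\sum_{i\in I}e_i^*e_i=1_{M(A)}$ in the strict topology, which is \eqref{1_mb}.

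Next I would transport this to the total space. Using the homeomorphisms $\mathcal V_i\approx\mathcal U_i$ and the bundle trivialisations, I define, exactly as in Subsection \ref{comm_exm}, sections $\xi_i$ over $\mathcal X$ supported over $\mathcal V_i$ and corresponding to $e_i$, half-inner products $\langle\xi_i,\eta\rangle\in A$, and the $A$-rigged space $X$ with inner product $\langle\xi,\zeta\rangle_X=\sum_{g\in G}\sum_{i\in I}\langle\xi,g\xi_i\rangle\langle g\xi_i,\zeta\rangle$ and its natural $G$-action. The disjointness $\mathcal V_i\cap g\mathcal V_i=\emptyset$ gives \eqref{g_ort}, the definition of $\xi_i$ gives \eqref{ee_xx}, and summing the local partitions of unity over the sheets gives \eqref{1_mkx}. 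Theorem \ref{main_lem} then shows that ${}_{\widetilde A}X_A$ is a $G$-Galois $A$-rigged $\widetilde A$-module, where $\widetilde A$ is the subordinated algebra; by construction $G$ acts on $\widetilde A$ and there is a $G$-equivariant $\varphi\colon A\to M(\widetilde A)$, so $(A,\widetilde A,{}_{\widetilde A}X_A,G)$ is a Galois quadruple.

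It remains to identify $\widetilde A$ and to check strict outerness. Restricted over each $\mathcal U_i$ the subordinated algebra is $C_0(p^{-1}(\mathcal U_i))\otimes\mathcal K$, so $\widetilde A$ is the $C_0$-section algebra of the pullback along $p$ of the $\mathcal K$-bundle of $A$; hence $\widetilde A$ has continuous trace and is separable, and it is the completion of the balanced algebraic tensor product $C_0(\mathcal X)\otimes_{C_0(\hat A)}A$ (the tensor norm being unique since type $I$ algebras are nuclear), which gives the density statement. Gluing the local homeomorphisms $\widehat{C_0(p^{-1}(\mathcal U_i))\otimes\mathcal K}\approx p^{-1}(\mathcal U_i)$, compatibly — via $\hat A\approx\widehat{\widetilde A}/G$ from \eqref{spectrum_galois_quot} — with the deck action, produces a homeomorphism $\widehat{\widetilde A}\approx\mathcal X$ under which \eqref{spectrum_galois_map} becomes $p$ and \eqref{action_of_g_on_spectrum} becomes the deck action. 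Since $G$ acts freely on $\mathcal X$, no nontrivial $g\in G$ is strictly inner: a generalized inner automorphism induces the identity on the spectrum of the algebra it acts on, whereas $\alpha_g$ induces the fixed-point-free deck map, so the two cannot agree modulo the kernel of any nonzero $\alpha_g$-invariant representation — the argument of Lemma \ref{finite_ctra_cov_pr_free ation}. Thus the $G$-action on $\widetilde A$ is strictly outer and $(A,\widetilde A,{}_{\widetilde A}X_A,G)$ is a noncommutative infinite covering projection. The delicate points are getting \eqref{1_mkx} to hold strictly in $M(\mathcal K(X))$ — the analogue of a partition of unity on the total space subordinate to the sheets of $p$, which forces one to match the local finiteness of $\{\mathcal U_i\}$ with the bundle trivialisations carefully — and verifying that the subordinated algebra is \emph{exactly} the pullback section algebra, so that $\widehat{\widetilde A}$ is the prescribed cover and not a larger space; these I expect to be the main obstacle, strict outerness and the density assertion being comparatively routine.
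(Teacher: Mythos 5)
Your overall strategy coincides with the paper's (locally finite cover of $\hat A$ over which $A$ trivialises, sheets $\mathcal V_i$, a partition of unity, then Theorem \ref{main_lem} and the subordinated algebra, with the spectrum identification and density read off from the local picture), but there is a genuine gap at the central step, and it is exactly the point you yourself flag as ``the main obstacle'' without resolving it. With your choice $e_i=\sqrt{a_i}\in C_0(\hat A)\subset Z(M(A))$ the elements $e_i^*e_i=a_i\cdot 1_{M(A)}$ are multipliers which do \emph{not} lie in $A$ (the fibres are $\mathcal K$, and a scalar function times the fibre identity is never compact), so condition \eqref{ee_xx} cannot hold: the inner product $\langle\xi_i,\xi_i\rangle_X$ of an $A$-rigged space is $A$-valued. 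For the same reason the would-be $\xi_i$ (a function supported on $\mathcal V_i$ acting as the fibre identity) is only a multiplier of the pullback algebra, not an element of $X$, and the operators $g\xi_i\rangle\langle g\xi_i$ are then multiplication operators that are not of the form $\theta_{\xi,\zeta}$ with $\xi,\zeta\in X$; hence \eqref{1_mkx} fails as well. Your construction literally reproduces the commutative case of Subsection \ref{comm_exm}, which only works because there the fibres are one-dimensional.

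The missing idea, which is the key device in the paper's proof, is a further decomposition in the fibre direction: using the trivialisations $C_0(\mathcal U_{i_0})\otimes_{C_0(\hat A)}A\approx C_0(\mathcal U_{i_0})\otimes\mathcal K$, fix an orthonormal basis $\{x_j\}_{j\in J}$ of the Hilbert space underlying $\mathcal K$ with rank-one projections $p_j$, refine the index set to $I=I_0\times J$, and set $e_i=e'_{i_0}\otimes p_j$, $\xi_i=\xi'_{i_0}\otimes p_j$. Then $e_i\in A$, $\xi_i$ lies in the module, $\sum_{i_0,j}e_i^*e_i$ converges strictly to $1_{M(A)}$ because $\sum_j p_j\to 1$ strictly in $M(\mathcal K)$, and the $\theta$-operators $g\xi_i\rangle\langle g\xi_i$ are genuinely rank one in the fibre, so \eqref{1_mb}--\eqref{g_ort} all hold and Theorem \ref{main_lem} applies; the subordinated algebra is then the section algebra of the pulled-back bundle, giving $\widehat{\widetilde A}\approx\mathcal X$ and the density of $C_0(\mathcal X)\otimes_{C_0(\hat A)}A$. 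One point in your favour: you supply an argument (freeness of the deck action versus triviality of generalized inner automorphisms on the spectrum, in the spirit of Lemma \ref{finite_ctra_cov_pr_free ation}) for strict outerness, which the statement implicitly requires and which the paper's own proof does not address; that part of your proposal is a genuine addition, but it does not repair the gap above.
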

\begin{proof}
Let $X_1 = C_0(\mathcal{X}) \otimes_{C_0(\hat A)} A$ be the algebraic tensor product. The space $X_0$ has a natural structure of involutive algebra such that for any $x\in C_0(\mathcal{X})$ the  element $x \otimes 1_{M(A)}$ commutes with any element in $X_0$. There is the natural involutive homomorphism $A \to M(X_1)$. Action of $G$ on $C_0(\mathcal{X})$ induces the natural action of $G$ on $X_0$. Let $X_1 \subset X_0$ be such that
\begin{equation*}
X_1 = \left\{x \in  X_0 \ | \ \exists a\in A; \ \sum_{g \in G}g\left(x^*x\right) =a  \right\}.
\end{equation*}
where a sum of the series means the strict convergence. Group $G$ naturally acts on $X_1$ and $X_1$ is a pre-$A$-rigged space such that the $A$-valued form is given by
\begin{equation*}
\langle x, y \rangle_{A} = \sum_{g \in G} g(x^*y)\in A
\end{equation*}
where a sum of the series means the strict convergence. Let $X_A$ be the norm completion of $X_1$, so $X_A$ is an $A$-rigged space. Similarly to \ref{comm_exm} we define a finite or countable set $I_0$, subsets $\{\mathcal{U}_{i_0}\}_{i_0\in I_0}\in \hat A$, $\{\mathcal{V}_{i_0}\}_{i_0\in I_0}\in  \mathcal{X}$, $e'_{i_0} \in C_0(\hat A)$, $\xi'_{i_0} \in C_0(\mathcal{X})$, such that $\mathrm{supp} \ e_{i_0} \subset \mathcal{U}_{i_0}$, $\mathrm{supp} \ \xi_{i_0} \subset \mathcal{V}_{i_0}$ and
\begin{equation*}
1_{M(C_0(\hat A))} = \sum_{i \in I_0} e'^*_{i_0}e'_{i_0},
\end{equation*}
\begin{equation*}
1_{M(C_0(\mathcal{X}))} = \sum_{g\in G}^{} \sum_{i_0\in I_0}^{}g\xi'^*_{i_0} g\xi'_{i_0}.
\end{equation*}
\begin{equation*}
\langle g\xi_{i_0}, \xi'_{i_0}\rangle_{C_0(\hat A)}=0, \  \text{for any nontrivial} \ g \in G.
\end{equation*}
Suppose that  $\mathcal{U}_{i_0}$ is contractible for any $i_0\in I_0$.  From general topology it follows that any vector bundle over $\mathcal{U}_{i_0}$ is trivial. From triviality of any vector bundle it follows that $C_0(\mathcal{U}_{i_0}) \otimes_{C_0(\hat A)}A \approx C_0(\mathcal{U}_{i_0}) \otimes \mathcal{K}$  for any $i_0 \in I_0$. Let $\mathcal{K} = \mathcal{K}(H_0)$ where $H_0$ has a basis  $\{x_j\}_{j\in J}$, where $J$ is a finite or countable set.  For any $j\in J$ denote by $p_j \in B(H_0)$ a one dimensional projector on $x_j$. Let $I = I_0 \times J$, $e_i = e'_{i_0} \otimes p_j $, $\xi_i = \xi'_{i_0} \otimes p_j$ where $i = (i_0, j)$. Direct calculatios shows that $e_i$, $\xi_i$ satisfy to the theorem \ref{main_lem}. Let $\widetilde{A}$ be a $C^*$-algebra subordinated to $e_i$, $\xi_i$. From the construction it follows that there is a homeomorphism $\widehat{\widetilde{A}} \approx \mathcal{X}$ and algebraic tensor product $C_0(\mathcal{X}) \otimes_{C_0(A)} A \subset \widetilde{A}$ is a dense in $\widetilde{A}$.
\end{proof}
\begin{rem}
Any commutative $C^*$-algebra has continuous trace. So described is subsection \ref{comm_exm} case is a particular case of \ref{cont_tr_exm}.
\end{rem}

\subsection{Infinite covering projection of noncommutative torus}\label{nc_torus_covering}

\begin{empt}\label{inf_cov_nct} {\it Construction of covering $C^*$-algebra}.

A noncommutative torus \cite{varilly:noncom} $A_{\theta}$ is a $C^*$-algebra generated by two unitary elements ($u, v \in U(A_{\theta})$) such that
\begin{equation}\nonumber
uv = e^{2\pi i\theta}vu, \ (\theta \in \mathbb{R}).
\end{equation}
We shall construct a $C^*$-algebra $\widetilde{A}_{\theta}$ which is said to be a universal covering of noncommutative torus. This algebra is not unique but it is a representative of the unique strong Morita equivalence class. Let $u \in U(B(H))$ be an unitary operator, such that $\mathrm{sp}(u)= \mathbb{C}^* = \{z \in \mathbb{C} \ | \ |z|=1\}$.
Let $\{\phi_i\}_{i\in \mathbb{N}}$ be an arbitrary sequence of
 (non-unique) Borel-measurable functions such that
\begin{equation}\label{root_sequence}
(\phi_i(z))^2 = z, \ (\forall z \in \mathrm{sp}(u), \ i \in \mathbb{N} ).
\end{equation}
According to the spectral theorem there is an operator $u_2 = \phi_1(u)$ such that $u^2_2=u$. Similarly we can construct $u_4 = \phi_2(u_2), ..., u_{2^n} = \phi_{n-1}(u_{2^{n-1}}), ...$ such that $u^2_{2^{n+1}}=u_{2^n}$ for any $n \in \mathbb{N}$. We have a following sequence of $C^*$-algebras
\begin{equation}
C(u) \subset C(u_2) \subset ... \subset C(u_{2^n}) \subset ... \ .
\end{equation}
For any $f \in C(\mathbb{C}^*)$ there is $f(u_{2^n})\in C(u_{2^n})$. Let $C'(u_{2^n}) \subset C(u_{2^n})$ be an ideal generated by $\{f(u_{2^n}) \in B(H) | f\in C(\mathbb{C}^*) \bigwedge f(1)=0 \}$. Above sequence induces the following sequence of $C^*$-algebras
\begin{equation}\label{u_2_seq}
C'(u) \subset C'(u_2) \subset ... \subset C'(u_{2^n}) \subset ...
\end{equation}

Direct limit of (\ref{u_2_seq}) (with respect to the category of $C^*$-algebras) naturally acts on $H$.
Let $C_0\left(\left(-2^n\pi, 2^n \pi \right)\right) \subset C_0\left(\left(-2^{n+1}\pi, 2^{n+1} \pi \right)\right)$ be the  natural inclusion. There is a following isomorphism
\begin{equation}\label{alg_approx}
C'(u)\approx C(\mathbb{C}^* \setminus\{1\}) \approx C((-\pi, \pi)).
\end{equation}
Similarly to \eqref{alg_approx} the sequence (\ref{u_2_seq}) is isomorphic to following sequence
\begin{equation}\label{u_2_seqr}
C_0\left(\left(-\pi,  \pi \right)\right) \xrightarrow{\phi_1}... C_0\left(\left(-2\pi, 2 \pi \right)\right) \xrightarrow{\phi_2} C_0\left(\left(-2^{2}\pi, 2^{2} \pi \right)\right) \xrightarrow{\phi_3} ...
\end{equation}
So direct limit of (\ref{u_2_seqr}) naturally acts on $H$. However direct limit of (\ref{u_2_seqr}) (in category of $C^*$-algebras) is naturally isomorphic to $C_0(\mathbb{R})$. So we have natural representation $\pi_u: C_0(\mathbb{R}) \to B(H)$.
Let $A_{\theta}\to B(H)$ be a faithful representation. Since  both $u, v \in A_{\theta}$ are unitary elements such that $\mathrm{sp}(u) = \mathrm{sp}(v) = \mathbb{C}^*$, above construction supplies two representations  $\pi_u: C_0(\mathbb{R}) \to B(H)$, $\pi_v: C_0(\mathbb{R}) \to B(H)$. Let $A\in B(H)$ be a subalgebra generated by operators $\pi_u(f)\pi_v(g)$, $\pi_v(g)\pi_u(f)$, ($f,g \in C_0(\mathbb{R})$). Denote by $\widetilde{A}_{\theta}$ the norm completion of $A$.
\begin{defn}
$ \widetilde{A}_{\theta}$ is said to be the {\it universal covering of the noncommutative torus $A_{\theta}$}.
\end{defn}
$\mathbb{Z}$ acts on $C_0(\mathbb{R})$ by following way $n \cdot f(\cdot) \mapsto f(\cdot + 2\pi n)$, $\forall f \in C_0(\mathbb{R}), \ n \in \mathbb{Z}$. This action induces action of $\mathbb{Z}^2$ on  $\widetilde{A}_{\theta}$ by following way
\begin{equation}
(n_1, n_2) \cdot \pi_u(f)\pi_v(g)= \pi_u(n_1 \cdot f)\pi_v(n_2 \cdot g), \ (n_1, n_2)\in \mathbb{Z}^2,
\end{equation}
\end{empt}
\begin{empt}\label{nc_torus_rigged}{\it Construction of a rigged module.}
There is the homeomorphism $S^1 \approx \mathbb{C}^* = \{z \in \mathbb{C} | \ |z| =1 \}$. Let $\mathcal{U}_1 = \{z \in \mathbb{C}^* | \mathrm{Im} \ z > -0.1\}$, $\mathcal{U}_2 = \{z \in \mathbb{C}^* | \mathrm{Im} \ z < 0.1\}$. $\mathcal{U}_1$, $\mathcal{U}_2$ are connected open subsets and they can be regarded as subsets of $S^1$ and $S^1=\mathcal{U}_1 \bigcup \mathcal{U}_2$. If $p: \mathbb{R} \to S^1$ is the universal covering projection then $p^{-1}(\mathcal{U}_i) = \bigsqcup \mathcal{V}_{ij}$ is disjoint union of subsets such that $\mathcal{V}_{ij}$  homeomorphic to $\mathcal{U}_i$ ($i=1,2$). Let  $1_{C(S^1))} = \sum_{i=1}^{2}a_i$ be partition of unity dominated by $\{\mathcal{U}_i\}$ $(i=1,2)$. So there are real valued positive functions $b_1, b_2 \in C(S^1)$ such that
\begin{equation}\label{two_squares_s1}
b_1^2 + b_2^2 = 1_{C(S^1)}
\end{equation}
$\mathbb{Z}$ acts on $\mathbb{R}$ by translations.
For any $\mathcal{U}_i$ we select a connected component $\mathcal{V}_i \subset p^{-1}(\mathcal{U}_i)$ and
\begin{equation}\label{n_comm_empt}
\mathcal{V}_i  \bigcap g \mathcal{V}_i = \emptyset \ \text{for any nontrivial} \ g \in \mathbb{Z}.
\end{equation}
 Let $\zeta_i \in C_0(\mathbb{R})$ is such that
\begin{equation}\label{ncomm_1}
\zeta_i(x)=\left\{
\begin{array}{c l}
    b_i(p(x)) & x \in \mathcal{V}_i \\
   0 & x \notin \mathcal{V}_i
\end{array}\right.
\end{equation}
From (\ref{n_comm_empt}), (\ref{ncomm_1}) it follows that
\begin{equation}\label{two_n_squares_s1}
1_{M(C_0(\mathbb{R}))}=\sum_{g \in \mathbb{Z}}^{}\sum_{i=1}^{2}g\zeta_ig\zeta_i.
\end{equation}
\begin{equation}\label{ncomm_2}
(g\zeta^*_i)\zeta_i = 0, \ \text{for any nontrivial} \ g \in G.
\end{equation}

Functions $b_i$  are defined on spectrum of $u$ and $v$ so there are elements $b_i(u)$, $b_i(v)\in A_{\theta}$, ($i=1,2$). Let us define following elements $e_1, ..., e_4 \in A_{\theta}$.
\begin{equation}\nonumber
e_1 = b_1(u)b_1(v), \ e_2 = b_1(u)b_2(v), \ e_3 = b_2(u)b_1(v), \ e_4 = b_2(u)b_2(v) .
\end{equation}
It is easy to check that
\begin{equation}\label{tor_adj}
e^*_1 = b_1(v)b_1(u), \ e^*_2 = b_2(v)b_1(u), \ e^*_3 = b_1(v)b_2(u), \ e_4 = b_2(v)b_2(u).
\end{equation}
From (\ref{two_squares_s1}), \eqref{tor_adj} it follows that
\begin{equation}\label{1_ncom}
1_{A_{\theta}} = \sum_{i=1}^{4}e^*_ie_i.
\end{equation}
Similarly define $\xi_1, ..., \xi_4 \in \widetilde{A}_{\theta}$ such that
\begin{equation}\nonumber
\xi_1 = \pi_u(\zeta_1)\pi_v(\zeta_1), \ \xi_2 = \pi_u(\zeta_1)\pi_v(\zeta_2), \\ \xi_3 = \pi_u(\zeta_2)\pi_v(\zeta_1), \ \xi_4 = \pi_u(\zeta_2)\pi_v(\zeta_2).
\end{equation}
From (\ref{ncomm_1})-(\ref{ncomm_2}) it follows that
\begin{equation*}
1_{M(\widetilde{A}_{\theta})}= \sum_{g \in \mathbb{Z}^2}\sum_{i=1}^{4}g\xi^*_i(g\xi_i).
\end{equation*}
\begin{equation*}
\xi^*_i\xi_j = e^*_ie_j, \ (i, j \in \{1,...,4\})
\end{equation*}
For any $i \in \{1,...,4\}$ and any $\eta \in \widetilde{A}_{\theta}$ there is a unique $b \in A_{\theta}$ such that $\xi_i\eta = \xi_i b$ we define $\langle \xi_i, \eta \rangle \stackrel{\text{def}}{=} e^*_ib\in A_{\theta}$, $\langle \eta, \xi_i \rangle \stackrel{\text{def}}{=} \langle \xi_i, \eta \rangle^* \in A_{\theta}$. Let  $X \subset \widetilde{A}_{\theta}$ be a $B$-module such that for any $\xi \in X$ the series
\begin{equation}\nonumber
\sum_{g \in \mathbb{Z}^2}\sum_{i=1}^{4}\langle \xi, g\xi_i \rangle \langle g\xi_i, \xi \rangle
\end{equation}
is norm convergent. We define a scalar product on $X$ such that
\begin{equation}\nonumber
\langle \xi, \zeta\rangle_X = \sum_{g\in \mathbb{Z}^2}\sum_{i=1}^{4}\langle \xi, g\xi_i \rangle \langle g\xi_i, \zeta \rangle, \ (\xi, \zeta \in X)
\end{equation}
The natural action $\mathbb{Z}^2$  on $\widetilde{A}_{\theta}$, induces the action of  $\mathbb{Z}^2$ on $X$, so $X$ is a $\mathbb{Z}_2$-equivariant $A_{\theta}$-rigged $\widetilde{A}_{\theta}$-module.
 From (\ref{ncomm_1}), (\ref{two_n_squares_s1}) (\ref{ncomm_2}) it follows that  $_{\widetilde{A}_{\theta}}X_{A_{\theta}}$ satisfied to conditions of theorem \ref{main_lem}. So  $_{\widetilde{A}_{\theta}}X_{A_{\theta}}$  is a $\mathbb{Z}^2$-Galois $A_{\theta}$-rigged $\widetilde{A}_{\theta}$-module.  $\widetilde{A}_{\theta}$ is the subordinated to $\{\xi_i\}_{i\in I}$ algebra. So we have a Galois quadruple
 \begin{equation}\label{nt_inf_quadruple}
\left(A_{\theta}, \ \widetilde{A}_{\theta}, \ _{\widetilde{A}_{\theta}}X_{A_{\theta}}, \mathbb{Z}^2\right).
 \end{equation}
  It is not known whether  action of $\mathbb{Z}^2$ on $\widetilde{A}_{\theta}$ is strictly outer.

\end{empt}

\section{Covering projections of spectral triples}
\subsection{Spectral triples}
A spectral triple can be regarded as a noncommutative generalization of a spin-manifold.
Any compact $\mathrm{spin}$-manifold corresponds to the unital spectral triple. Spectral triple axioms contain very strong condition with respect to the Dirac operator spectrum (See \ref{dim_sp_ax}). However in case of non-compact $\mathrm{spin}$-manifolds the Dirac operator spectrum is continuous \cite{nicolas_ginoux:dirac_spectrum}. So we have no a good algebraic definition of non-compact spin manifold. There are several notions of non-compact spectral triples \cite{connes_marcolli:motives}, \cite{moyal_spectral}, \cite{raimar_wulkenhaar:nc_spectral_triple},  but these notions require a deformation of the Dirac operator. It is known that any covering of a spin-manifold is also (naturally) a spin-manifold. Otherwise any infinite covering of any (locally) compact space is non-compact. So any infinite covering of spectral triple can be regarded as a non-compact noncommutative spin-manifold. This definition does not require a deformation of the Dirac operator.

\begin{defn}\cite{hajac:toknotes}
\label{df:spec-triple}
A (unital) {\bf \index{spectral triple} spectral triple} $(\mathcal{A}, H, D)$ consists of:
\begin{itemize}
\item
an \emph{algebra} $\mathcal{A}$ with an involution $a \mapsto a^*$,  equipped
with a faithful representation on:
\item
a \emph{Hilbert space} $H$; and also
\item
a \emph{selfadjoint operator} $D$ on $H$, with dense domain
$\Dom D \subset H$, such that $a(\Dom D) \subseteq \Dom D$ for all
$a \in \mathcal{A}$,
\end{itemize}
satisfying the following two conditions:
\begin{itemize}
\item
the operator $[D,a]$, defined initially on $\Dom D$, extends to a
\emph{bounded operator} on $H$, for each $a \in \mathcal{A}$;
\item
$D$ has \emph{compact resolvent}: $(D - \lambda)^{-1}$ is compact, when
$\lambda \notin \mathrm{sp}(D)$.
\end{itemize}
\end{defn}
Spectral triples should satisfy to several axioms, one of them is a dimension axiom.
\begin{empt}\label{dim_sp_ax}{\it Dimension axiom.} See \cite{varilly:noncom})

There is an integer $n$, the dimension of the spectral triple, such that the length element $ds = |D|^{-1}$ is an infinitesimal of order $1/n$. By "infinitesimal" we mean simply a compact operator on $H$. Since the days of Leibniz, an infinitesimal is conceptually a nonzero quantity smaller than any positive $\varepsilon$. Since we work on the arena of an infinite-dimensional Hilbert space, we may forgive the violation of
the requirement $T < \varepsilon$ over a finite-dimensional subspace (that may depend on $\varepsilon$). $T$ must then be an operator with discrete spectrum, with any nonzero $\lambda$ in $\mathrm{sp}(T)$ having
finite multiplicity; in other words, the operator $T$ must be compact. The singular values of $T$, i.e., the eigenvalues of the positive compact operator $|T|=\left(T^*T\right)^{1/2}$ are arranged in decreasing order: $\mu_0 \ge \mu_1 \ge \mu_2 \ge ...$ . We then say that $T$ is an {\it infinitesimal of order} $\alpha$ if
\begin{equation*}
\mu_k = \mathcal{O}\left(k^{-\alpha}\right) \ \text{as} \ k \to \infty.
\end{equation*}
Notice that infinitesimals of {\it first order} have singular values that form a {\it logarithmically
divergent series}:
\begin{equation}\nonumber
\mu_k(T) = \mathcal{O}\left(\frac{1}{k}\right) \Rightarrow \sigma_N(T):= \sum_{k < N}^{}=\mathcal{O}\left(\log N\right).
\end{equation}
The dimension axiom can then be reformulated as: "there is an integer $n$ for which the singular values of $D^{-n}$ form a logarithmically divergent series". If commutative triple corresponds to spin-manifold $M$ then $n=\mathrm{dim} \ M$.
\end{empt}

\subsection{Construction of covering projection}\label{sp_triple_cov}

Let $B$ and $A$ be $C^*$-algebras,  $_AX_B$ be a $G$-equivariant $B$-rigged $A$-module such that conditions of the theorem \ref{main_lem} are satisfied.
Suppose that there is a spectral triple $(\mathcal{B}, H, D)$ such that
\begin{itemize}
\item $\mathcal{B} \subset B$ is a pre-$C^*$-algebra which is a dense subalgebra in $B$.
\item there is a faithful representation $B \to B(H)$.
\end{itemize}
We would like to find a natural construction of the spectral triple $(\mathcal{A}, X \otimes_BH, \widetilde{D} )$ such that
 \begin{itemize}
\item $\mathcal{A} \subset A$ is a pre-$C^*$-algebra which is a dense subalgebra of $A$.
\item $\widetilde{D}gh = g\widetilde{D}h$, for any $g \in G$, $h \in \Dom \widetilde{D}$.
\end{itemize}

Let $\mathcal{B}_1 \subset B = \{b \in B \ | \ [D,b] \in B(H) \}$. There is a completely contractive representation of $\mathcal{B}_1$ (See \cite{bram:atricle})
\begin{equation*}
\pi_{1}:\mathcal{B}_{1} \rightarrow B(H\oplus H) \approx M_{2}(B(H))
\end{equation*}
\begin{equation}\label{sob1repr}
\pi_1(a) = \begin{pmatrix}a & 0\\ [D,a] &  a\end{pmatrix}.
\end{equation}
Representation \eqref{sob1repr} supplies an operator algebra structure on $\mathcal{B}_1$. Let  $\{e_i\}_{i \in I}$ be elements from theorem \ref{main_lem}, suppose that $e_i \in \mathcal{B}_1$ ($\forall i \in I$). Let  $J= G \times I$ be a finite or countable set of indices. Let $\mathcal{H}$ be a column Hilbert space (definition \ref{column_space}) such that the basis of  $\mathcal{H}$ is indexed by elements of $J$. Let $H_{\mathcal{B}_1}= \mathcal{H} \otimes \mathcal{B}_1$ be a Haagerup tensor product \cite{helemsky:qfa}, $\xi_j$ ($j \in J = G \times I$) be finite or countable set $g\xi_i$ ($g \in G, i \in I)$. Let $\mathcal{X}_1 \subset _AX_B$ be a norm closure of module generated by following sums
\begin{equation*}
\sum_{j \in J} b_j \xi_j; \ b_j \in \mathcal{B}_1
\end{equation*}
where the norm is defined by the representation \eqref{sob1repr}.
There are following inclusion $i: \mathcal{X}_1 \to H_{\mathcal{B}_1}$   and projection $p: H_{\mathcal{B}_1} \to \mathcal{X}_1$
\begin{equation*}
i (x) = \begin{pmatrix}
\langle x, \xi_{j_1} \rangle \\
\langle x, \xi_{j_2} \rangle \\
...
\end{pmatrix}
\end{equation*}
\begin{equation*}
p \begin{pmatrix}
b_{j_1} \\
b_{j_2} \\
...
\end{pmatrix} = \sum_{j \in J} b_j \xi_j.
\end{equation*}
such that $pi = \mathrm{Id}_{ \mathcal{X}_1}$ and $ip: \mathcal{H}_{\mathcal{B}_1}\to\mathcal{H}_{\mathcal{B}_1}$ is a projection.  According to Appendix A there is a Grassmannian connection
\begin{equation*}
\nabla : \mathcal{X}_1 \to  \mathcal{X}_1\otimes_{\mathcal{B}_1} \Omega^1\mathcal{B}_1.
\end{equation*}
From definition of $\xi_j$ it follows that these maps are $G$-equivariant, i.e.
\begin{equation*}
p(gx) = gp(x),
\end{equation*}
\begin{equation*}
i(gy) = gp(y),
\end{equation*}
so the connection $\nabla$ is $G$-equivariant, i.e. from
\begin{equation*}
\nabla x = \sum_{\iota} x_{\iota} \otimes b_{\iota}, \ (x, x_{\iota} \in \mathcal{X}_1, \ b_{\iota} \in \Omega^1\mathcal{B}_1)
\end{equation*}
it follows that
\begin{equation*}
\nabla gx = \sum_{\iota} gx_{\iota} \otimes b_{\iota}, \ (g \in G).
\end{equation*}
Let $\varphi: \Omega^1\mathcal{B}_1 \to B(H)$ be such that
\begin{equation*}
(a_1da_2)h = a_1[D, a_2]h; \ \forall h \in \mathrm{Dom}D.
\end{equation*}
Let denote $\nabla_{D}: \mathcal{X}_1 \to \mathcal{X}_1 \otimes_A B(H)$ such that from
\begin{equation*}
\nabla x = \sum_{\iota} x_{\iota} \otimes b_{\iota}
\end{equation*}
it follows that
\begin{equation*}
\nabla_D x = \sum_{\iota} x_{\iota} \otimes \varphi(b_{\iota}).
\end{equation*}
Let us define a map $\overline{D}:\mathcal{X}_1 \times \Dom D \to X \otimes_BH$ such that from
\begin{equation}
\nabla_{D} x = \sum_{\iota}x_{\iota}\otimes y_{\iota}.
\end{equation}
it follows that
\begin{equation}\label{dirac_defn}
\overline{D}(x,h) = \sum_{\iota} x_{\iota} \otimes y_{\iota}(h) + x \otimes Dh.
\end{equation}
This map is $\mathcal{B}_1$ balanced and defines a map
$\overline{D}: \mathcal{X}_1 \otimes_{\mathcal{B}_1} \Dom H \to X \otimes_B H$. Otherwise $ \mathcal{X}_1 \otimes_{\mathcal{B}_1} \mathrm{Dom} (D) \subset X \otimes_B H$ is a dense subspace. Let $\widetilde{D}$ be the closure \cite{bezandry_diagana:bound_unbound} of  operator $\overline{D}$. So we have following ingredients of spectral triple:
\begin{itemize}
\item A Hilbert space $X\otimes_B H$;
\item An unbounded operator $\widetilde{D}$ on $X\otimes_B H$.
\end{itemize}
Theorem \ref{main_lem} gives a $C^*$-algebra $A$ and its representation $A \to B(X \otimes_BH)$. This data set supplies a a Fr\'{e}chet subalgebra of $\mathcal{A}\subset A$ defined by following seminorms $\|a\|$, $\|[D,a]\|$ $\|[D,[D,a]]\|$ $\|[D,[D[D,a]]]\|$, ... .  $\mathcal{A}$ is a pre-$C^*$-algebra such that $(\mathcal{A}, X\otimes_B H, \widetilde{D})$ is a spectral triple. Details of the construction are described in \cite{bram:atricle}. The spectral triple $(\mathcal{A}, X\otimes_B H, \widetilde{D})$ can be regarded as an (infinite) covering of $(\mathcal{B}, H, D)$.

\subsection {Examples of covering projections}
\subsubsection{Coverings of spin manifolds}
\begin{empt} Commutative spectral triples correspond to spin-manifolds \cite{hajac:toknotes}. Any spin-manifold $M$ have a spinor bundle $\mathcal{S}$ which is a finite dimensional linear bundle over $M$. There are smooth sections $\Ga_{\mathrm{smooth}}(M, \mathcal{S})$ of the spinor bundle and the linear Dirac operator $\slashed{D}:\Ga_{\mathrm{smooth}}(M, \mathcal{S}) \to \Ga_{\mathrm{smooth}}(M, \mathcal{S})$. Since $\Ga_{\mathrm{smooth}}(M, \mathcal{S})$ is a dense subspace of $L^2(M, \mathcal{S})$ we can extend $\slashed{D}$ as an unbounded operator on $L^2(M, \mathcal{S})$. Then $(\Coo(M), L^2(M, \mathcal{S}), \slashed{D})$ is a commutative spectral triple.
\end{empt}

\begin{empt}\label{comm_sp_triples_dirac}{\it Topological construction}.
Let  $\pi: \widetilde{M} \to  \widetilde{M}/G = M$ be a covering projection of a spin-manifold $M$ with the spinor bundle $\mathcal{S}$. Let $\widetilde{\mathcal{S}}$ be the pullback of $\mathcal{S}$ by $\pi$. Dirac operator is local, i.e. for any open subset $U \subset M$ there is the restriction $\slashed{D}|_U:\Ga_{\mathrm{smooth}}(U, \mathcal{S}|_U) \to \Ga_{\mathrm{smooth}}(U, \mathcal{S}|_U)$. Dirac operator defines all its restrictions and vice versa. Let $x_0\in \widetilde{M}$ be any point and $U\subset \widetilde{M}$ is such that $x_0\in U$ and $\pi |_U: U \to \pi (U)$ is a homeomorphism. We have a natural isomorphism of vector spaces
\begin{equation}\label{spin_cov}
 \Ga_{\mathrm{smooth}}\left(\pi(U), \mathcal{S}|_{\pi(U)}\right) \approx  \Ga_{\mathrm{smooth}}\left(U, \mathcal{\widetilde{S}}|_U\right).
\end{equation}
Dirac operator is defined on $\Ga_{\mathrm{smooth}}\left(\pi(U), \mathcal{S}|_{\pi(U)}\right)$ and from \eqref{spin_cov} it follows that there is the natural Dirac operator on $\Ga_{\mathrm{smooth}}\left(U, \mathcal{\widetilde{S}}|_U\right)$. For any small open subset $U\in \widetilde{M}$ we have a restriction of the Dirac operator. These restrictions supply the global definition of the Dirac operator on the $\widetilde{M}$.
\end{empt}
\begin{empt}\label{comm_alg_constr}{\it Algebraic construction}.
Let $U$ be as in \ref{comm_sp_triples_dirac} and $\mathcal{I}=\{f \in C_0(\widetilde{M}) \ | \ f|_U = 0\}$ is a closed two sided ideal. Let us recall formula \eqref{1_mkx} and select $I_0 \subset I$, $g\in G$ such that
\begin{equation*}
1_{C_0(\widetilde{M})/\mathcal{I}} =  \sum_{i \in I_0}^{}g\xi_i\rangle \langle g\xi_i \ \mathrm{mod} \ \mathcal{I}.
\end{equation*}
Let $\mathcal{I}'=\{f \in C_0(M) \ | \ f|_{\pi(U)} = 0\}$. Then
\begin{equation*}
\sum_{i\in I_0}e^*_ie_i = 1 \ \mathrm{mod} \mathcal{I}',
\end{equation*}
and
\begin{equation*}
d\sum_{i\in I_0}e^*_ie_i = 0 \ \mathrm{mod} \mathcal{I}'.
\end{equation*}

Let  $x = \sum_{i\in I_0}e^*_i\xi_i \in \mathcal{X}_1$. Then $\nabla x = 0 \ \mathrm{mod} \ \mathcal{I}$
From \eqref{dirac_defn} it follows that
\begin{equation*}
D (x \otimes h) = x \otimes dh \ \mathrm{mod} \mathcal{I}.
\end{equation*}
This result coincides with above topological construction \ref{comm_sp_triples_dirac}.
\end{empt}

\subsubsection{Covering projection of noncommutative torus}

The noncommutative torus $A_{\theta}$ and its covering projection  $\widetilde{A}_{\theta}$ are  described in \ref{nc_torus_covering}. There is a dense pre-$C^*$-algebra $\mathcal{A}_{\theta} \subset A_{\theta}$ such that there is a spectral triple $(\mathcal{A}_{\theta}, H, D)$. We would like construct a dense pre-$C^*$-algebra  $\widetilde{\mathcal{A}}_{\theta} \subset \widetilde{A}_{\theta}$ and spectral triple $(\widetilde{\mathcal{A}}_{\theta}, \widetilde{H}, \widetilde{D})$ which is a covering projection of $(\mathcal{A}_{\theta}, H, D)$.
\begin{empt}\label{nt_spectral_triple}{\it Noncommutative torus as a spectral triple}.
\newline
Let
\begin{equation}\nonumber
\mathcal{A}_{\theta} = \left\{a \in A_{\theta} \ | \ a = \sum_{r,s} a_{rs}u^r, v^s \ \bigwedge \ \mathrm{sup}_{r, s \in \mathbb{Z}}\left(1 + r^2 + s^2\right)^k|a_{rs}| < \infty, \ \forall k \in \mathbb{N}\right\}.
\end{equation}
There is a linear functional $\tau_0: A_{\theta} \to \mathbb{C}$ given by
\begin{equation*}
\tau_0\left(\sum_{r,s} a_{rs}u^r v^s\right) = a_{00}.
\end{equation*}
The GNS representation space $H_0 = L^2(A_{\theta}; \tau_0)$ may be described as the completion of the vector space $A_{\theta}$ in the Hilbert norm
\begin{equation*}
\|a\|_2= \tau_0\left(\sqrt{a^*a}\right).
\end{equation*}
There are two *-homomorphisms $\pi_u: C(S^1) \to A_{\theta}$, $\pi_v: C(S^1) \to  A_{\theta}$ given by
\begin{equation}\label{circular_u}
\pi_u \left(\sum_{n \in \mathbb{Z}}a_ne^{i \varphi}\right) = \sum_{n \in \mathbb{Z}}u^n,
\end{equation}
\begin{equation}\label{circular_v}
\pi_v \left(\sum_{n \in \mathbb{Z}}a_ne^{i \varphi}\right) = \sum_{n \in \mathbb{Z}}v^n.
\end{equation}
where $\varphi$ is an angular argument of the circle.
Denote by $\underline{a}$ the image in $H_0$ of $a \in A_{\theta}$. Since $\underline{1}$ is cyclic and separating the Tomita involution is given by
\begin{equation*}
J_0(\underline{a})=\underline{a^*}.
\end{equation*}
To define structure of spectral triple we shall introduce double GNS Hilbert space $H = H_0 \oplus H_0$ and define
\begin{equation*}
J =  \begin{pmatrix}
0 & -J_0 \\
J_0 & 0
\end{pmatrix}
\end{equation*}
\end{empt}
Thee are two derivatives $\delta_1$, $\delta_2$
\begin{equation*}
\delta_1 \left(\sum_{r, s}a_{r,s}u^rv^s\right)= \sum_{rs}2\pi i r a_{rs}u^rv^s,
\end{equation*}
\begin{equation*}
\delta_2 \left(\sum_{r, s}a_{r,s}u^rv^s\right)= \sum_{rs}2\pi i s a_{rs}u^rv^s.
\end{equation*}
which satisfy Leibniz rule, i.e.
\begin{equation*}
\delta_j(ab) = (\delta_ja)b = a(\delta_jb); \ (j=1,2; \ a, b\in \mathcal{A}_{\theta}).
\end{equation*}
From \eqref{circular_u}, \eqref{circular_v} it follows that
\begin{equation}\label{d_circ_u}
\delta_1\left(\pi_u(\phi) \pi_v(\psi)\right) = 2\pi \pi_u\left(\frac{d\phi}{d\varphi}\right)\pi_v(\psi),
\end{equation}
\begin{equation}\label{d_circ_v}
\delta_2 \left(\pi_u(\phi) \pi_v(\psi)\right) = 2\pi \pi_u(\phi)\pi_v\left(\frac{d\psi}{d\varphi}\right).
\end{equation}
There are derivations
\begin{equation}\label{deri_tau}
\partial = \partial_{\tau} = \delta_1 + \tau \delta_2; \ (\tau \in \mathbb{C}, \ \mathrm{Im}(\tau)\neq 0),
\end{equation}
\begin{equation*}
\partial^+ = -J_0\partial_{\tau}J_0.
\end{equation*}
Hilbert space of the spectral triple is a direct sum $H = H_0 \oplus H_0$, action of $\mathcal{A}$ and Dirac operator are given by
\begin{equation*}
\pi(a)= \begin{pmatrix}
a & 0 \\
0 & a
\end{pmatrix},
\end{equation*}
\begin{equation*}
D= \begin{pmatrix}
0 & \underline{\partial}^+ \\
\underline{\partial} & 0
\end{pmatrix}.
\end{equation*}
\begin{empt}{\it Construction of covering projection}.
Our construction is similar to \ref{nc_torus_covering}. Suppose that functions $b_1,b_2$ from \eqref{two_squares_s1} are differentiable, i.e. $b_1, b_2 \in C^1(S^1)$. Application of \ref{sp_triple_cov} supplies a triple $(\widetilde{\mathcal{A}}_{\theta}, \widetilde{H}, \widetilde{D})$ which is a covering of $(\mathcal{A}_{\theta}, H, D)$.
\end{empt}
\begin{empt}{\it Explicit construction}.
Our construction assumes that $\widetilde{H} = X\otimes_B H$, it is reasonable to set $\widetilde{H}_0 = X \otimes_B H_0$. There is a natural inclusion $X \to H_0$ given by
\begin{equation*}
x \mapsto \underline{x} = x \otimes \underline{1}.
\end{equation*}
$X$ is a dense in $H_0$, so $\mathcal{X}_1$ is dense in $H_0$.
\end{empt}
Similarly to linear map $A_{\theta}\to H_0$ there is a partially defined linear map $\widetilde{A}_{\theta} \to \widetilde{H}_0$, $a \mapsto \underline{a}$. Similarly to \eqref{d_circ_u}, \eqref{d_circ_v} we can define homomorphisms $\pi_u, \pi_v: C_0(\mathbb{R}) \to M(\widetilde{A}_{\theta})$ and derivations $\delta_1$, $\delta_2$  on $\mathcal{X}_1$
such that
\begin{equation}\label{d_r_u}
\delta_1\left(\underline{\pi_u(\phi) \pi_v(\psi)}\right) = 2 \pi \underline{\pi_u\left(\frac{d\phi}{dx}\right)\pi_v(\psi)},
\end{equation}
\begin{equation}\label{d_r_v}
\delta_2 \left(\underline{\pi_u(\phi) \pi_v(\psi)}\right) = 2\pi \underline{\pi_u(\phi)\pi_v\left(\frac{d\psi}{dx}\right)}.
\end{equation}
where $\phi, \psi \in C_0(\mathbb{R})$. Further construction of $\widetilde{D}$ is similar to the construction of $D$ described in \ref{nt_spectral_triple}.
\begin{rem}
It is easy to find a real spectral triple structure on $(\widetilde{\mathcal{A}}_{\theta}, \widetilde{H}, \widetilde{D})$.
\end{rem}

\section{Covering projections of foliations}

\subsection{Foliations}
\begin{empt}
{\it Geometrical issues.}
Let $V$ be a smooth manifold and $TV$ is its tangent bundle, so that for each $x \in V$ , $T_xV$ is the tangent space
of $V$ at $x$. A smooth subbundle $F$ of $TV$ is called integrable if one of the following equivalent conditions is
satisfied:
\begin{enumerate}
\item  Every $x \in V$ is contained in a submanifold $W$ of $V$ such that
\begin{equation*}
T_yW = F_y; \ \forall y \in W.
\end{equation*}
\item Every $x\in V$ is in the domain $U \to V$ of a submersion $p : U \to \mathbb{R}^q$ ($q = \mathrm{Codim} F$) with
\begin{equation*}
F_y =\mathrm{Ker}(p_*)_y; \ \forall y \in V.
\end{equation*}
\item  $C^{\infty}(F) = \{X \in C^{\infty}(TV ) \ | \ x\in F_x, \forall x\in V\}$  is a Lie algebra.
\item The ideal $J(F)$ of smooth exterior differential forms which vanish on $F$ is stable by exterior differentiation.
\end{enumerate}
A foliation of $V$ is given by an integrable subbundle $F$ of $TV$. The leaves of the foliation $(V, F)$ are the
maximal connected submanifolds $L$ of $V$ with $T_x(L) = F_x, \ \forall x \in L$, and the partition of $V$ in leaves  is characterized geometrically by its "local triviality": every point $x \in V$ has a neighborhood $U$ and
a system of local coordinates $(x^j )_{j=1,...,\mathrm{dim} V}$ so that the partition of $V = \bigcup L_{\alpha}$ in connected components of leaves
corresponds to the partition of $\mathbb{R}^{\mathrm{dim} V} =\mathbb{R}^{\mathrm{dim} F} \times \mathbb{R}^{\mathrm{Codim} F}$ in the parallel affine subspaces $\mathbb{R}^{\mathrm{dim} R} \times \mathrm{pt}$.

\begin{exm}\label{kronecker_foli_exm}
The Kronecker foliation of the 2-torus $V = \mathbb{R}^2/\mathbb{Z}^2$ given by the differential
equation $dx = \theta dy$ where $\theta \notin \mathbb{Q}$, one sees that:
\begin{enumerate}
\item Though $V$ is compact, the leaves  $L_{\alpha}$, $\alpha \in A$ can fail to be compact.
\item The space $A$ of leaves $L_{\alpha}$, $\alpha \in A$, can fail to be Hausdorff and in fact the quotient topology can be
trivial (with no non trivial open subset).
\end{enumerate}
\end{exm}
Now, given a leaf $L$ of $(V, F)$ and two points $x, y \in L$ of this leaf,
any simple path $\gamma$ from $x$ to $y$ on the leaf $L$ uniquely determines a germ $h(\gamma)$ of a diffeomorphism from a transverse neighborhood of $x$ to a transverse neighborhood of
$y$. The {\it holonomy groupoid} of a leaf $L$ is the quotient of its fundamental groupoid by the equivalence relation which identifies two paths $\gamma$ and $\gamma'$ from $x$ to $y$ (both in $L$) iff $h(\gamma)= h(\gamma')$.
The holonomy covering $\widetilde{L}$ of a leaf is the covering of $L$ associated to the normal subgroup of its fundamental group $\pi_1(L)$ given
by paths with trivial holonomy. The holonomy groupoid of the foliation is the union
$G$ of the holonomy groupoids of its leaves. Given an element $\gamma$ of G, we denote by $x = s(\gamma)$ the origin of the path $\gamma$, by $y = r(\gamma)$ its end point, and $r$ and $s$ are called the range and source maps. An element $\gamma$ of G is thus given by two points $x = s(\gamma)$ and $y = r(\gamma)$ of $V$ together with an equivalence class of smooth paths: the $\gamma(t), t \in [0,1]$ with $\gamma(0) = x$ and $\gamma(1) = y$, tangent to the bundle $F$ (i.e. with  $\gamma^{\bullet}(t) \in F_{\gamma(t)}$  identifying $\gamma_1$ and $\gamma_2$ as equivalent iff the holonomy of the path $\gamma_2 \cdot \gamma_1^{-1}$ at the point $x$ is the identity.
The graph $G$ has an obvious composition law. For $\gamma, \gamma' \in G$, the composition $\gamma \circ \gamma'$ makes sense if $s(\gamma) = r(\gamma')$. The groupoid $G$ is by construction a (not necessarily Hausdorff) manifold of dimension $\mathrm{dim}G = \mathrm{dim} V + \mathrm{dim} F$.
\end{empt}
\begin{defn}\label{red_gpd_foli}
Let $N \subset V$ be a simply connected smooth submanifold such that:
\begin{itemize}
\item $\mathrm{dim}N = \mathrm{Codim}F$.
\item $N$ everywhere transverse to foliation.
\item $N$ meets every leaf.
\end{itemize}
The {\it reduced groupoid of foliation} is given by
\begin{equation*}
G_N = \{\gamma \in G \ | \ r(\gamma) \in N, \ s(\gamma) \in N \}.
\end{equation*}
$G_N$ is a smooth manifold of dimension $\mathrm{dim}N$.
\end{defn}
\begin{empt}
{\it Algebra of reduced groupoid.}
There is the algebra $C^{\infty}_c(G_N)$  of $G_N$ given by
\begin{equation*}
f * g (\gamma) = \sum_{\gamma_1 \circ \gamma_2 = \gamma}f(\gamma_1)g(\gamma_2),
\end{equation*}
\begin{equation*}
f^*(\gamma)=\overline{f(\gamma^{-1})}.
\end{equation*}
Let $G^x_N = \{\gamma\in G_N, r(\gamma)=x \}$, and $l^2(G_N,x)$ is a space of complex valued functions on $G_N^x$ such that
\begin{equation*}
\xi \in l^2(G_N,x) \Leftrightarrow	\sum_{\gamma \in G^x_N}|\xi(\gamma)|^2 < \infty.
\end{equation*}
The $C^*$-algebra norm on $C^{\infty}_c(G_N)$ is given by the supremum of the norm $\|\pi_x(f)\|$ where for each $x \in N$, $\pi_x$ is the representation of $C^{\infty}_c(G_N)$ in $l^2(G_N,x)$ given by
\begin{equation*}
(\pi_x(f)\xi)(\gamma)=\sum_{\gamma_1 \circ \gamma_2 = \gamma}f(\gamma_1)\xi(\gamma_2); \ \xi \in l^2(G_N,x), \  \gamma \in G_N, \ s(\gamma) = x.
\end{equation*}
We shall denote this $C^*$-algebra by $C^*_{r,N}(V,F)$.
\end{empt}

\subsection{Covering projections}
\begin{empt}
Let $(V,F)$ be a foliation, $N \in V$ satisfies definition \ref{red_gpd_foli}. Suppose that $\pi: \widetilde{V} \to V$ is a topological covering projection and $G$ is its grooup of covering transformations. Denote by $\widetilde{F}$  pullback of $F$ by $\pi$ and let $\widetilde{N}$ be  a preimage of $N$. If $\widetilde{N}$ is simply connected then $(\widetilde{V}, \widetilde{F})$ is a foliation and $\widetilde{N}$ satisfies definition \ref{red_gpd_foli}.  $C^{\infty}_c(G_{\widetilde{N}})$ is a right $C^{\infty}_c(G_N)$  module, the right action is given by
\begin{equation*}
f \cdot g (\gamma) = \sum_{\gamma_1 \circ \gamma_2}f(\gamma_1) g(\pi(\gamma_2)); \ f \in C^{\infty}_c(G_{\widetilde{N}}), \ g \in C^{\infty}_c(G_{N}).
\end{equation*}
This right action induces right action of $C^*_{r,N}(V,F)$ on  $C^*_{r,\widetilde{N}}(\widetilde{V},\widetilde{F})$.
We would like to show that $C^*_{r,\widetilde{N}}(\widetilde{V},\widetilde{F})$ is an (infinite) covering projection of $C^*_{r,N}(V,F)$. Let $I$ be a finite or countable set of indices such that there is a locally finite \cite{munkres:topology}  covering  $\mathcal{U}_i \subset N$ ($i \in I$) of $N$ ($N = \bigcup_{i \in I} \mathcal{U}_i$) by connected open subsets such that $p^{-1}(\mathcal{U}_i)$ ($\forall i\in I$) is a disjoint union of naturally homeomorphic to $\mathcal{U}_i$ sets. Let $1_{M(C_0(N))} = \sum_{i\in I}^{}a_i$ be a partition of unity dominated by $\{\mathcal{U}_i\}$ (See \cite{munkres:topology}). The family $e_i = \sqrt{a_i}$  satisfies condition (\ref{1_mb}), i.e.
\begin{equation}\label{comm_p1}
1_{M(C_0(N))} = \sum_{i \in I}^{} e^*_ie_i.
\end{equation}
Suppose that $e_i\in C^{\infty}(N)$, $\forall i \in I$.
Select a connected component $\mathcal{V}_i \subset \pi^{-1}(\mathcal{U}_i)$ for any $i\in I$ and $\mathcal{V}_i \bigcap g \mathcal{V}_i = \emptyset$. Let $\xi_i \in C_0(\widetilde{N})$ is such that
\begin{equation*}
\xi_i(x)=\left\{
\begin{array}{c l}
    e_i\left(p\left(x\right)\right) & x \in \mathcal{V}_i \\
   0 & x \notin \mathcal{V}_i
\end{array}\right.
\end{equation*}
It is easy to check that
\begin{equation}\label{comm_p2}
1_{M(C_0(\widetilde{N}))} = \sum_{g\in G}^{} \sum_{i\in I}^{}g\xi^*_i g\xi_i.
\end{equation}
Elements $e_i$ (resp. $\xi_i$) can be regarded as elements of $C^{\infty}(N)$ (resp. $C^{\infty}(\widetilde{N})$) given by
\begin{equation*}
e_i(\gamma)=\left\{
\begin{array}{c l}
    e_i\left(x\right) & s(\gamma)= r(\gamma) = x \in N \\
   0 & s(\gamma) \neq r(\gamma)
\end{array}\right.
\end{equation*}
\begin{equation*}
\xi_i(\widetilde{\gamma})=\left\{
\begin{array}{c l}
    \xi_i\left(\widetilde{x}\right) & s(\widetilde{\gamma})= r(\widetilde{\gamma}) = \widetilde{x} \in \widetilde{N} \\
   0 & s(\widetilde{\gamma}) \neq r(\widetilde{\gamma})
\end{array}\right.
\end{equation*}
Henceforth  $e_i$ (resp. $\xi_i$) are elements of $C^*_{r,N}(V,F)$
(resp.  $C^*_{r,\widetilde{N}}(\widetilde{V},\widetilde{F})$).
From \eqref{comm_p1}, \eqref{comm_p2} it follows that
\begin{equation}\label{comm_pf1}
1_{M(C^*_{r,N}(V,F))} = \sum_{i \in I}^{} e^*_ie_i.
\end{equation}
\begin{equation}\label{comm_pf2}
1_{M(C^*_{r,\widetilde{N}}(\widetilde{V},\widetilde{F}))} = \sum_{g\in G}^{} \sum_{i\in I}^{}g\xi^*_i g\xi_i.
\end{equation}

For any $\xi_i$ ($i\in I$) and any $\eta \in C^*_{r,\widetilde{N}}(\widetilde{V},\widetilde{F})$ there is an unique $b \in  C^*_{r,N}(V,F)$ such that $\xi_i\eta = \xi_i b$. Denote by $\langle \xi_i, \eta \rangle \stackrel{\text{def}}{=} e^*_ib\in C^*_{r,N}(V,F)$, $\langle \eta, \xi_i \rangle \stackrel{\text{def}}{=} \langle \xi_i, \eta \rangle^*$ Let us define a subspace  $X_0 \subset C^*_{r,\widetilde{N}}(\widetilde{V},\widetilde{F})$ such that for any $\zeta \in X_0$ the series
\begin{equation*}
\sum_{g\in G}\sum_{i\in I}^{}\langle \zeta, g\xi_i \rangle \langle g\xi_i, \zeta \rangle
\end{equation*}
is norm convergent. Define scalar product $\langle \xi, \zeta\rangle$ on $X_0$ such that
\begin{equation*}
\langle \xi, \zeta\rangle = \sum_{g\in G}\sum_{i\in I}^{}\langle \xi, g\xi_i \rangle \langle g\xi_i, \zeta \rangle
\end{equation*}
There is a norm $\|\xi\| = \sqrt{\langle \xi,\xi\rangle}$, let $X$ be the norm completion of $X_0$.
Natural action $G$  on $C^*_{r,\widetilde{N}}(\widetilde{V},\widetilde{F})$, induces action of $G$ on $X$, so $X$ is
Equations (\ref{comm_pf1}),(\ref{comm_pf2}) are in fact conditions \eqref{1_mb}, \eqref{1_mkx} of the theorem \ref{main_lem}. So  $_{C^*_{r,\widetilde{N}}(\widetilde{V},\widetilde{F}))}X_{C^*_{r,N}(V,F)}$  is a $G$-Galois $C^*_{r,N}(V,F)$-rigged $C^*_{r,\widetilde{N}}(\widetilde{V},\widetilde{F}))$-module. $C^*_{r,\widetilde{N}}(\widetilde{V},\widetilde{F}))$ is the subordinated to $\{\xi_i\}_{i\in I}$ algebra
It is not known whether the  action of $G$ is strictly outer in general.

\end{empt}

\section{Covering projection of isospectral deformations}
\subsection{Isospectral deformations}
\begin{empt}
A very general construction of isospectral
deformations
of noncommutative geometries is constructed in \cite{connes_landi:isospectral}. The construction
implies in particular that any
  compact spin-manifold $M$ whose isometry group has rank
$\geq 2$ admits a
natural one-parameter isospectral deformation to noncommutative geometries
$M_\theta$.
We let $(\mathcal{A} , H , D)$ be the canonical spectral triple associated with a
compact spin-manifold $M$. We recall that $\mathcal{A} = C^\infty(M)$ is
the algebra of smooth
functions on $M$, $H= L^2(M,\mathcal{S})$ is the Hilbert space of spinors and $D$
is the Dirac operator.
Let us assume that the group $\mathrm{Isom}(M)$ of isometries of $M$ has rank
$r\geq2$.
Then, we have an inclusion
\begin{equation*}
\mathbb{T}^2 \subset \mathrm{Isom}(M) \, ,
\end{equation*}
with $\mathbb{T} = \mathbb{R} / 2 \pi \mathbb{Z}$ the usual torus, and we let $U(s) , s \in
\mathbb{T}^2$, be
the corresponding unitary operators in $H = L^2(M,S)$ so that by construction
\begin{equation*}
U(s) \, D = D \, U(s).
\end{equation*}
Also,
\begin{equation*}
U(s) \, a \, U(s)^{-1} = \alpha_s(a) \, , \, \, \, \forall \, a \in \mathcal{A} \, ,
\label{actfun}
\end{equation*}
where $\alpha_s \in \mathrm{Aut}(\mathcal{A})$ is the action by isometries on the
algebra of functions on
$M$.

\noindent
We let $p = (p_1, p_2)$ be the generator of the two-parameters group $U(s)$
so that
\begin{equation*}
U(s) = \exp(i(s_1 p_1 + s_2 p_2)) \, .
\end{equation*}
The operators $p_1$ and $p_2$ commute with $D$.
Both $p_1$ and $p_2$
have integral spectrum,
\begin{equation}
\mathrm{Spec}(p_j) \subset \mathbb{Z} \, , \, \, j = 1, 2 \, .
\end{equation}

\noindent
One defines a bigrading of the algebra of bounded operators in $H$ with the
operator $T$ declared to be of bidegree
$(n_1,n_2)$ when,
\begin{equation*}
\alpha_s(T) = \exp(i(s_1 n_1 + s_2 n_2)) \, T \, , \, \, \, \forall \, s \in
\mathbb{T}^2 \, ,
\end{equation*}
where $\alpha_s(T) = U(s) \, T \, U(s)^{-1}$ as in (\ref{actfun}). \\
Any operator $T$ of class $C^\infty$ relative to $\alpha_s$ (i. e. such that
the map $s \rightarrow \alpha_s(T) $ is of class $C^\infty$ for the
norm topology) can be uniquely
written as a doubly infinite
norm convergent sum of homogeneous elements,
\begin{equation*}
T = \sum_{n_1,n_2} \, \widehat{T}_{n_1,n_2} \, ,
\end{equation*}
with $\widehat{T}_{n_1,n_2}$ of bidegree $(n_1,n_2)$ and where the sequence
of norms $||
\widehat{T}_{n_1,n_2} ||$ is of
rapid decay in $(n_1,n_2)$.
Let $\lambda = \exp(2 \pi i \theta)$. For any operator $T$ in $H$ of
class $C^\infty$ we define
its left twist $l(T)$ by
\begin{equation}\label{l_defn}
l(T) = \sum_{n_1,n_2} \, \widehat{T}_{n_1,n_2} \, \lambda^{n_2 p_1} \, ,
\end{equation}
and its right twist $r(T)$ by
\begin{equation*}
r(T) = \sum_{n_1,n_2} \, \widehat{T}_{n_1,n_2} \, \lambda^{n_1 p_2} \, ,
\end{equation*}
Since $|\lambda | = 1$ and $p_1$, $p_2$ are self-adjoint, both series
converge in norm. \\
One has,
\begin{lem}\label{conn_landi_iso_lem}\cite{connes_landi:isospectral}
\begin{itemize}
\item[{\rm a)}] Let $x$ be a homogeneous operator of bidegree $(n_1,n_2)$
and $y$ be
a homogeneous operator of  bidegree $(n'_1,n'_2)$. Then,
\begin{equation}
l(x) \, r(y) \, - \,  r(y) \, l(x) = (x \, y \, - y \, x) \,
\lambda^{n'_1 n_2} \lambda^{n_2 p_1 + n'_1 p_2}
\end{equation}
In particular, $[l(x), r(y)] = 0$ if $[x, y] = 0$.
\item[{\rm b)}] Let $x$ and $y$ be homogeneous operators as before and
define
\begin{equation}
x * y = \lambda^{n'_1 n_2} \, x y \, ; \label{star}
\end{equation}
then $l(x) l(y) = l(x * y)$.
\end{itemize}
\end{lem}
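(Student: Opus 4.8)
The plan is to reduce both identities to one intertwining relation between the one-parameter unitary groups generated by $p_1,p_2$ and homogeneous operators, after which everything is a short bookkeeping computation. First I would observe that when $x$ is homogeneous of bidegree $(n_1,n_2)$ the sum defining $l$ in \eqref{l_defn} has a single term, so $l(x)=x\,\lambda^{n_2p_1}$, and likewise $r(x)=x\,\lambda^{n_1p_2}$. Next I would record the key fact: since $\lambda=\exp(2\pi i\theta)$ and $U(s)=\exp(i(s_1p_1+s_2p_2))$, one has $\lambda^{ap_1}=U(2\pi\theta a,0)$ and $\lambda^{ap_2}=U(0,2\pi\theta a)$ for every $a\in\mathbb{R}$; inserting $s=(2\pi\theta a,0)$ and $s=(0,2\pi\theta a)$ into the bidegree relation $\alpha_s(x)=\exp(i(s_1n_1+s_2n_2))\,x$ — which holds for all $s\in\mathbb{R}^2$ since $n_1,n_2\in\mathbb{Z}$ — together with the fact that $p_1,p_2$ are commuting self-adjoint operators with integral spectrum, yields
\begin{equation}\nonumber
\lambda^{ap_1}x=\lambda^{an_1}\,x\,\lambda^{ap_1},\qquad \lambda^{ap_2}x=\lambda^{an_2}\,x\,\lambda^{ap_2}.
\end{equation}

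For part (a) I would then just move the twisting unitaries around. From $l(x)=x\lambda^{n_2p_1}$ and $r(y)=y\lambda^{n'_1p_2}$, pushing $\lambda^{n_2p_1}$ to the right past the bidegree-$(n'_1,n'_2)$ operator $y$ picks up the scalar $\lambda^{n_2n'_1}$ and, using $[p_1,p_2]=0$, gives $l(x)r(y)=\lambda^{n'_1n_2}\,xy\,\lambda^{n_2p_1+n'_1p_2}$; symmetrically, pushing $\lambda^{n'_1p_2}$ past $x$ gives $r(y)l(x)=\lambda^{n'_1n_2}\,yx\,\lambda^{n_2p_1+n'_1p_2}$. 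Subtracting yields $l(x)r(y)-r(y)l(x)=(xy-yx)\,\lambda^{n'_1n_2}\lambda^{n_2p_1+n'_1p_2}$, and the special case $[l(x),r(y)]=0$ when $[x,y]=0$ is immediate. For part (b), the identical manipulation gives $l(x)l(y)=x\lambda^{n_2p_1}\,y\lambda^{n'_2p_1}=\lambda^{n'_1n_2}\,xy\,\lambda^{(n_2+n'_2)p_1}$; since $xy$ is homogeneous of bidegree $(n_1+n'_1,n_2+n'_2)$, the right-hand side is precisely $l\bigl(\lambda^{n'_1n_2}xy\bigr)=l(x*y)$.

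There is no serious obstacle here; the proof is purely formal once the intertwining relation is in place. The only points needing a line of care are the identification of $\lambda^{ap_j}$ with the isometry unitaries $U(\cdot)$ evaluated at a real parameter — so that the bidegree hypothesis may legitimately be applied to it — and, should one later want the analogous statements for non-homogeneous operators of class $C^\infty$, the remark that one must first expand into homogeneous components and invoke the rapid-decay estimate on $\|\widehat{T}_{n_1,n_2}\|$ to justify rearranging the resulting norm-convergent series. Since the lemma is stated only for homogeneous $x$ and $y$, this convergence issue does not enter the proof itself.
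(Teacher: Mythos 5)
Your computation is correct and is exactly the standard argument: for homogeneous $x$ the sum in \eqref{l_defn} collapses to $l(x)=x\lambda^{n_2p_1}$, and the intertwining relations $\lambda^{ap_1}x=\lambda^{an_1}x\lambda^{ap_1}$, $\lambda^{ap_2}x=\lambda^{an_2}x\lambda^{ap_2}$ (valid since $p_1,p_2$ commute, have integral spectrum, and commute with the grading) give both (a) and (b) by moving the twisting unitaries across. The paper itself offers no proof — it quotes the lemma from \cite{connes_landi:isospectral} — and your argument coincides with the computation given there, so there is nothing to add beyond noting that your remark about homogeneity of $xy$ of bidegree $(n_1+n'_1,n_2+n'_2)$ is the one point that must be stated to conclude $l(x)l(y)=l(x*y)$, and you did state it.
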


\noindent
The product $*$ defined in (\ref{star}) extends by linearity
to an associative product on the linear space of smooth operators and could
be called a $*$-product.
One could also define a deformed `right product'. If $x$ is homogeneous of
bidegree
$(n_1,n_2)$ and $y$ is homogeneous of bidegree $(n'_1,n'_2)$ the product is
defined by
\begin{equation*}
x *_{r} y = \lambda^{n_1 n'_2} \, x y \, .
\end{equation*}
Then, along the lines of the previous lemma one shows that $r(x) r(y) = r(x
*_{r} y)$.

We can now define a new spectral triple where both $H$ and the operator
$D$ are unchanged while the
algebra $\mathcal{A}$  is modified to $l(\mathcal{A})$ . By
Lemma~{\ref{conn_landi_iso_lem}}~b) one checks that  $l(\mathcal{A})$ is still an algebra. Since $D$ is of bidegree $(0,0)$ one has,
\begin{equation*}
[D, \, l(a) ] = l([D, \, a]) \label{bound}
\end{equation*}
which is enough to check that $[D, x]$ is bounded for any $x \in l(\mathcal{A})$. There is a spectral triple $\left(l(\mathcal{A}), H, D\right)$.

\end{empt}
\begin{rem} In \cite{connes_landi:isospectral} {\it real spectral triples} are described and necessary antilinear operator $J$ is constructed. This well known construction is dropped here.

\end{rem}
\subsection{Construction of covering projections}
\begin{empt}
Let $M$ be a spin-manifold such that $\mathbb{T}^2 \subset \mathrm{Isom}(M)$, $\mathcal{A} = C^{\infty}(M)$ and $l(\mathcal{A})$ is defined by \eqref{l_defn}. Let $\tilde{M} \to M$ be a covering projection. Denote by $\mathbb{T}'^2$ a copy of $\mathbb{T}^2$ for clarity. Then there is covering $\pi_T:\mathbb{T}'^2 \to \mathbb{T}^2$ such that $\mathbb{T}'^2 \subset \mathrm{Isom}(\widetilde{M})$. If $\pi_T$ is such that
\begin{equation*}
\pi_T = \mathbb{T}'^2 \approx S^1 \times S^1 \xrightarrow{k_1 \times k_2} S^1 \times S^1 \approx  \mathbb{T}^2.
\end{equation*}
then there is $\widetilde{l}(\widetilde{\mathcal{A}})= \widetilde{l}(C^{\infty}(\widetilde{M}))$ such that
\begin{equation*}
\widetilde{l}(\widetilde{T}) = \sum_{n_1,n_2} \, \widehat{\widetilde{T}}_{n_1,n_2} \, \widetilde{\lambda}^{n_2 p_1} \
\end{equation*}
where $\widetilde{\lambda}=\lambda^{\frac{1}{k_1k_2}}$ ($\lambda$ corresponds to \eqref{l_defn}). It is clear that $\widetilde{l}(\widetilde{\mathcal{A}})$ is a right $l(\mathcal{A})$ module. Let $l(A)$, $\widetilde{l}(\widetilde{A})$ be norm completions of $\widetilde{l}(\widetilde{\mathcal{A}})$ and $l(\mathcal{A})$ respectively. Manifolds $M$ and $\widetilde{M}$ are locally compact and we can define $I$, $\{e_i\}_{i\in I}$, $\{\xi_i\}_{i\in I}$ which are already constructed in \ref{comm_exm}. Suppose that $e_i \in C^{\infty}(M)$ $\forall i \in I$. Then it is easy to check that
\begin{equation*}
1_{M(l(A))}= \sum_{i \in I} \left(l(e_i)\right)^*l(e_i),
\end{equation*}
\begin{equation*}
1_{M(\widetilde{l}(\widetilde{A})} = \sum_{g\in G}^{} \sum_{i \in I}^{}g\left(\widetilde{l}(\xi)_i\right)^* g\widetilde{l}(\xi_i)
\end{equation*}
where $G$ is the group of covering transformations.
Similarly to previous examples we can construct $G$-Galois $l(A)$-rigged $\widetilde{l}(\widetilde{A})$-module $_{\widetilde{l}(\widetilde{A})}X_{l(A)}$.  $\widetilde{l}(\widetilde{A})$ is the subordinated to $\{\xi_i\}_{i\in I}$ algebra.
\end{empt}
\begin{rem}
It is easy to find the real spectral triple structure on $(\widetilde{l}(\widetilde{A}), \widetilde{H}, \widetilde{D})$.
\end{rem}

\section{Noncommutative covering projections and the Dixmier trace}
\subsection{The Dixmier trace}
\paragraph{} The Dixmier trace is the noncommutative analogue of integral over a manifold.
\begin{empt}\cite{varilly:noncom} The algebra $\mathcal{K}$ of compact operators on a separable, infnite-dimensional Hilbert space
contains the ideal $\mathcal{L}^1$ of traceclass operators, on which  $\|T\|_1 = \mathrm{Tr}|T|$ is a norm not to
be confused with the operator norm $\|T\|$. Let $\sigma_n(T)$ be such that
\begin{equation*}
\sigma_n(T)= \sup\left\{\|TP_n\|_1 \ | \ P_n \ \text{is a projector of rank} \ n \right\}
\end{equation*}

There is a formula \cite{connes_moscovici:local_index}, coming from real
interpolation theory of Banach spaces:
\begin{equation*}
\sigma_n(T)= \left\{\inf \{\|R\|_1+n\|S\| \ | \ R,S \in \mathcal{K}, \ R + S = T \right\}.
\end{equation*}

We can think of $\sigma_n(T)$ as the trace of $|T|$ with a {\it cutoff} at the scale $n$.  This scale does not have to be an integer; for any scale $ \lambda > 0$, we can {\it define}
\begin{equation*}
\sigma_{\lambda}(T) = \inf\left\{\|R\|_1+\lambda \|S\| \ | \ R,S \in \mathcal{K}, \ R + S = T \right\}.
\end{equation*}
If $0 < \lambda \le 1$, then $\sigma_{\lambda}(T) = \lambda\|T\|$. If $\lambda = n + t$ with $0 \le t < 1$, one checks that
\begin{equation}\label{sltn}
\sigma_{\lambda}(T)=(1-t)\sigma(T)+t \sigma_{n+1}(T),
\end{equation}
so $\lambda \mapsto \sigma_{\lambda}(T)$ is a piecewise linear, increasing, concave function on $(0,1)$.
\paragraph*{} Each $\sigma_{\lambda}$ is a norm by \eqref{sltn}, and so satisfies the triangle inequality. It is proved in \cite{varilly:noncom} that for positive compact operators, there is a triangle inequality in the opposite direction:
\begin{equation}\label{sigma_uneq}
\sigma_{\lambda}(A) + \sigma_{\mu}(B) \le \sigma_{\lambda + \mu}(A + B); \ \text{if} \ A,B > 0.
\end{equation}
It suffices to check this for integral values $\lambda=m$, $\mu =n$. If $P_m$, $P_n$ are projectors of respective ranks $m$, $n$, and if $P = P_m \vee P_n$ is the projector with range $P_mH + P_nH$, then
\begin{equation*}
\|AP_m\|_1 + \|BP_n\|_1 = Tr(P_mAP_m) + Tr(P_nBP_n) \le Tr(P(A + B)P) \le \|(A + B)P\|_1,
\end{equation*}
and \eqref{sigma_uneq} follows by taking supremum over $P_m$, $P_n$. Thus we have a sandwich of norms:
\begin{equation}\label{norm_sandwich}
\sigma_{\lambda}(A + B) \le \sigma_{\lambda}(A) + \sigma_{\lambda}(B) \le \sigma_{2\lambda}(A + B) \ \text{if} \ A,B \ge 0.
\end{equation}
\end{empt}
\begin{empt}\cite{varilly:noncom} {\it The Dixmier ideal}. The {\itfirst-order infinitesimals} can now be defined precisely as the following normed ideal of compact operators:
\begin{equation*}
\mathcal{L}^{1+}=\left\{T \in \mathcal{K} \ | \ \|T\|_{1+}=\sup_{\lambda > e} \frac{\sigma_{\lambda}(T)}{\log\lambda}< \infty\right\},
\end{equation*}

that obviously includes the traceclass operators $\mathcal{L}^1$. (On the other hand, if $p > 1$ we have $\mathcal{L}^{1+}\subset\mathcal{L}^p$, where the latter is the ideal of those $T$ such that $\mathrm{Tr}|T|^p < 1$, for which $\sigma_{\lambda}(T) = O(\lambda^{1-1/p})$.) If $T \in \mathcal{L}^{1+}$, the function $\lambda \mapsto \sigma_{\lambda}(T)/\log\lambda$ is continuous and bounded on the interval
$[e,\infty)$, i.e., it lies in the $C^*$-algebra $C_b[e,\infty)$. We can then form the following Ces\`{a}ro mean of this function:
\begin{equation*}
\tau_{\lambda}(T)=\frac{1}{\log \lambda}\int_{e}^{\lambda}\frac{\sigma_u(T)}{\log u}\frac{du}{u}.
\end{equation*}

Then $\lambda \mapsto \tau_{\lambda}(T)$ lies in $C_b[e, \infty)$ also, with upper bound $\|T\|_{1+}$. From \eqref{norm_sandwich} we can derive
that
\begin{equation*}
\tau_{\lambda}(A)+\tau_{\lambda}(B)-\tau_{\lambda}(A+B)\le \left(\|A\|_{1+}+\|B\|_{1+}\right)\log 2 \frac{\log \log \lambda}{\log \lambda},
\end{equation*}
so that $\tau_{\lambda}$ is "asymptotically additive" on positive elements of $\mathcal{L}^{1+}$.
\paragraph{} We get a true additive functional in two more steps. Firstly, let $\dot \tau(A)$ be the class of $\lambda \mapsto \tau_{\lambda}(A)$ in the quotient $C^*$-algebra $\mathcal{B} = C_b[e, \infty)/C_0[e, \infty)$. Then $\dot \tau$ is an additive,
positive-homogeneous map from the positive cone of $\mathcal{L}^{1+}$ into $\mathcal{B}$, and $\dot \tau(UAU^{-1}) = \dot \tau(A)$ for
any unitary $U$; therefore it extends to a linear map $\dot \tau: \mathcal{L}^{1+} \to \mathcal{B}$ such that $\dot \tau (ST) =\dot \tau (TS)$ for $T \in \mathcal{L}^{1+}$ and any $S$.
\paragraph{} Secondly, we follow $\dot \tau$ with any state (i.e., normalized positive linear form) $\omega:\mathcal{B} \to \mathbb{C}$.
The composition is a {\it Dixmier trace}:
\begin{equation*}
\mathrm{Tr}_{\omega}(T) = \omega(\dot \tau (T)).
\end{equation*}
\end{empt}
\begin{empt}{\it The noncommutative integral.} Unfortunately, the $C^*$-algebra $\mathcal{B}$ is not separable and there is no way to {\it exhibit} any particular state. This problem can be finessed by noticing that a function $f\in C_b[e, \infty)$ has a limit $\lim_{\lambda \to \infty} f(\lambda) = c$ if and only if $\omega(f) = c$ does not
depend on $\omega$. Let us say that an operator $T\in \mathcal{L}^{1+}$ is {\it measurable} if the function $\lambda \mapsto \tau_{\lambda}(T)$ converges as $\lambda \to \infty$, in which case any $\mathrm{Tr}_{\omega}(T)$ equals its limit. We denote by $\ncint T$ the common value of the Dixmier traces:
\begin{equation*}
\ncint T = \lim_{\lambda \to \infty} \tau_{\lambda}(T) \ \text{if this limit exists}.
\end{equation*}

We call this value the {\it noncommutative integral} of $T$.
\paragraph{}Note that if $T\in \mathcal{K}$ and $\sigma_n(T)/ \log n$ converges as $n \to \infty$, then $T$ lies in $\mathcal{L}^{1+}$ and is
measurable. 
\end{empt}

\begin{exm}\label{comm_integ_exm}{\it Commutative case}.
Let $M$ be a compact spin-manifold, and let $g$ be the Riemannian metric \cite{varilly:noncom}. There is the Riemannian volume form $\Omega$ given by
\begin{equation*}
\Omega = \sqrt{\mathrm{det}g(x)}dx^1 \wedge...\wedge dx^n.
\end{equation*}
It is proven in \cite{varilly:noncom} that for any $a \in C(M)$ following equation hold

\[
    \int_{M} a \Omega=
\begin{cases}
    m!(2\pi)^m \ncint a \slashed D^{-2m} ,& \text{if} \ \mathrm{dim}M = 2m \ \text{is even},\\
    (2m+1)!!\pi^{m+1} \ncint a |\slashed D|^{-2m-1}             &  \text{if} \ \mathrm{dim}M = 2m + 1 \ \text{is odd}.
\end{cases}
\]

\end{exm}

\subsection{Dixmier trace of the operator lift}

\begin{empt} Let  $\left(A, \widetilde{A}, _{\widetilde{A}}X_A, G\right)$ be a Galois quadruple, such that $G$ is finite. Let $A \to B(H)$ be a nondegenerated representation. Let $T \in \mathcal{L}^{1+}$ be a measurable operator. From theorem \ref{main_lem} it follows that
\begin{equation}\label{hilb_sum}
X \otimes_A H = \bigoplus_{g\in G}gH
\end{equation}
\end{empt}
\begin{defn}
We say that $\widetilde{T} \in B(X \otimes_A H)$ is the {\it lift} of $T\in B(H)$ if it is a diagonal matrix $\mathrm{diag}(T, T, ..., T)$ with respect to direct sum \eqref{hilb_sum}.
\end{defn}
\begin{empt}
Indeed $\widetilde{T}$ is a sum of $|G|$ operators such that any summand is "isomorphic" to $T$. Suppose $T \in \mathcal{L}^{1+}$ is measurable, and $\widetilde{T}$ is the lift of $T$. Since Dixmier trace is additive, the lift $\widetilde{T}\in \mathcal{L}^{1+}$ is also mesurable, and
\begin{equation}\label{dix_lift}
\ncint \widetilde{T} = |G| \ncint T.
\end{equation}
\end{empt}
\begin{rem}
Constructed in \ref{sp_triple_cov} Dirac operator can be regarded as the lift of unbounded operator.
\end{rem}
\begin{exm}{\it Finite covering projection of a spin manifold}.
Let $M$ be a compact spin-manifold, and let $\Omega$ be the  Riemannian volume form. Let $p: \widetilde{M}\to M$ be a finitely listed regular covering projection, so we have a Galois triple $\left(C(M), C(\widetilde{M}), G\right)$. The volume form can be naturally lifted to the form $\widetilde{\Omega}$. Any function $a \in C(M)$ has a lift $\widetilde{a}\in C(\widetilde{M})$, such that $\widetilde{a}(x)=a(p(x))$; $\forall x \in \widetilde{M}$. Let $V \in M$ be an open set such that $p^{-1}(V)$ is a disjoint union of homeomorphic to $V$ sets, i.e.
\begin{equation*}
p^{-1}(V) = \bigsqcup_{g \in G} g \widetilde{V}
\end{equation*}
where $\widetilde{V} \subset \widetilde{M}$ is homeomorphic to $V$.
It is clear that
\begin{equation*}
\int_V a = \Omega \int_{g\widetilde{V}} \widetilde{a} \widetilde{\Omega}; \ \forall g \in G.
\end{equation*}
From above equation it follows that
\begin{equation*}
 \int_{\widetilde{M}} \widetilde{a}\widetilde{\Omega} = |G| \int_M a \Omega.
\end{equation*}
According to example \ref{comm_integ_exm} above equation is a particular case of \eqref{dix_lift}.
\end{exm}
\begin{exm}\label{nt_fin_cov}{\it A finite covering projection of a noncommutative torus}.
Let $p : A_{\theta} \to A_{\theta'}$ be a *-homomorphism such that
\begin{itemize}
\item There are $m, n, k \in \mathbb{N}$ such that $\theta'=\frac{\theta + 2 \pi k}{mn}$;
\item $A_{\theta'}$ is generated by $u',v' \in U(A_{\theta'})$ and $p$ is given by
\begin{equation*}
u \mapsto u'^m; \ v \mapsto v'^n.
\end{equation*}
\end{itemize}
Let $G = \mathbb{Z}_m \times \mathbb{Z}_n$,  and let $g_1, g_2 \in G$ be generators which naturally correspond to the direct product decomposition. Let us define action of $G$ on $A_{\theta'}$ such that
\begin{equation*}
g_1u' = e^{\frac{2 \pi i}{m}}u'; \ g_1v'=v';
\end{equation*}
\begin{equation*}
g_2u' = u'; \ g_2v' = e^{\frac{2 \pi i}{n}}v'.
\end{equation*}
It is clear that $A_{\theta}=A_{\theta'}^G$. Let us show that $A_{\theta}\approx A_{\theta'}^G\to A_{\theta'}$ is a finite noncommutative covering projection.
\paragraph{}
Let $\mathcal{U}_1 \subset S^1$, $\mathcal{U}_2 \subset S^1$, $b_1 \in C^(S^1)$, $b_2 \in C(S^1)$, $e_1,..., e_4 \in A_{\theta}$ elements described in  \ref{nc_torus_rigged}.
A group $\mathbb{Z}_l$ acts on $C(S^1)$ such that the action of generator $g\in \mathbb{Z}_l$ is given by
\begin{equation*}
g a = e^{\frac{2 \pi i}{l}}a; \ \forall a \in C(S^1).
\end{equation*}
For any $\mathcal{U}_i$ we select a connected component $\mathcal{V}_i \subset p^{-1}(\mathcal{U}_i)$ and
\begin{equation}\label{n_comm_empt1}
\mathcal{V}_i  \bigcap g \mathcal{V}_i = \emptyset \ \text{for any nontrivial} \ g \in \mathbb{Z}_l.
\end{equation}
 Let $\zeta_i \in C_0(S^1)$ is such that
\begin{equation}\label{ncomm_11}
\zeta_i(x)=\left\{
\begin{array}{c l}
    b_i(p(x)) & x \in \mathcal{V}_i \\
   0 & x \notin \mathcal{V}_i
\end{array}\right.
\end{equation}
From (\ref{n_comm_empt1}), (\ref{ncomm_11}) it follows that
\begin{equation}\label{two_n_squares_s11}
1_{C(S^1)}=\sum_{g \in \mathbb{Z}_l}^{}\sum_{i=1}^{2}g\zeta_ig\zeta_i.
\end{equation}
\begin{equation}\label{ncomm_21}
(g\zeta^*_i)\zeta_i = 0, \ \text{for any nontrivial} \ g \in G.
\end{equation}
There are natural *-homomorphisms $\pi_u, \pi_v: C(S^1)\to A_{\theta'}$ such that $\pi_u$ (resp. $\pi_v$) maps the unitary generator of $C(S^1)$ to $u$ (resp. $v$).
Similarly let $\xi_1, ..., \xi_4 \in A_{\theta'}$ be such that
\begin{equation}\nonumber
\xi_1 = \pi_u(\zeta_1)\pi_v(\zeta_1); \ \xi_2 =\pi_u(\zeta_1)\pi_v(\zeta_2); \ \xi_3 = \pi_u(\zeta_2)\pi_v(\zeta_1); \ \xi_4 = \pi_u(\zeta_2)\pi_v(\zeta_2).
\end{equation}
From (\ref{ncomm_11})-(\ref{ncomm_21}) it follows that
\begin{equation*}
1_{A_{\theta'}}= \sum_{g \in \mathbb{Z}_m \times \mathbb{Z}_n}\sum_{i=1}^{4}g\xi^*_i(g\xi_i).
\end{equation*}
So we have a Galois triple
\begin{equation}\label{nt_fin_triple}
\left(A_{\theta}, A_{\theta'}, \mathbb{Z}_m\times\mathbb{Z}_n\right).
\end{equation}

Other conditions of theorem \ref{main_lem} can be easily checked. It is shown \cite{varilly:noncom} that area of the noncommutative torus $A_{\theta}$ is given by
\begin{equation*}
2 \pi \ncint D^{-2} = \frac{1}{\mathrm{Im}\tau}
\end{equation*}
where $\tau$ defines the derivation \eqref{deri_tau}. From $|\mathbb{Z}_m\times\mathbb{Z}_n| = mn$ if follows that
\begin{equation*}
2 \pi \ncint \widetilde{D}^{-2} = mn \ 2 \pi \ncint D^{-2} = nm \frac{1}{\mathrm{Im}\tau}.
\end{equation*}

\end{exm}

\section{Generated by multipliers extensions}
\paragraph{} This section contains described in \cite{ivankov:nc_cov_k_hom} constructions. However \cite{ivankov:nc_cov_k_hom} is not concerned with infinite noncommutative covering projections. Several results concerned with infinite covering projections has been added.
\begin{defn}\label{gen_by_v_extension}
Let $A$ be a $C^*$-algebra, $A\rightarrow B(H)$ is a faithful representation, and $u \in U(A^+)$, $v \in U(B(H))$, be such that $v^n=u$ and $v^i \notin U(A^+)$, ($i=1,..., n-1$). A {\it generated by $v$ extension} is a minimal subalgebra of $B(H)$ which contains following operators:
\begin{enumerate}
\item $v^i a; \ (a \in A, \ i=0, ..., n-1)$
\item $a v^i$.
\end{enumerate}
Denote by $A\{v\}$ a generated by $v$ extension. It is clear that $v$ is a multiplier of $A\{v\}$.
Number $n$ is said to be the {\it degree} of the extension.
\end{defn}
\begin{rem}
Sometimes a triple $\left(A, \ A\{v\}, \ \mathbb{Z}_n\right)$ is a noncommutative covering projection but it is not always true, see for example \ref{s_3_covering_sample}.
\end{rem}
\begin{lem}\label{fin_morita_lem} Let $A$ be a $C^*$-algebra, $A\rightarrow B(H)$ is a faithful representation, let  $u\in U(A^+)$ be an unitary element such that $\mathrm{sp}(u)=\mathbb{C}^*=\{z \in \mathbb{C} \ | \ |z| = 1 \}$, $\xi, \eta \in B_{\infty}(\mathrm{sp}(u))$ are Borel-measurable functions such that $\xi^n = \eta^n = \mathrm{Id}_{\mathbb{C}^*}$. Then there is an isomorphism
\begin{equation}
A\{\xi(u)\} \otimes \mathcal{K} \rightarrow A\{\eta(u)\} \otimes \mathcal{K}
\end{equation}
which is a left $A$-module isomorphism. The isomorphism is given by
\begin{equation}
\xi(u) \otimes x \mapsto \eta(u) \otimes \xi\eta^{-1}(u)x; \ (x \in \mathcal{K}).
\end{equation}
\end{lem}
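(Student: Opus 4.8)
The plan is to build the isomorphism from the unitary ``twist'' that relates the two Borel $n$-th roots of $u$. Put $v := \xi(u)$, $v' := \eta(u)$ and $w := (\xi\eta^{-1})(u)$, all defined by the Borel functional calculus in $B(H)$. The hypotheses $\xi^n = \eta^n = \mathrm{Id}_{\mathbb{C}^*}$ give immediately that $v$, $v'$ are unitaries with $v^n = (v')^n = u$, that $w$ is a unitary with $w^n = 1_{B(H)}$ and $w^{-1} = (\eta\xi^{-1})(u)$, and that $v = wv'$; moreover $w$, $v$, $v'$ all lie in the abelian von Neumann algebra generated by $u$, so $w$ commutes with $v$, with $v'$ and with $C^*(u)$, although not with $A$ in general. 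Finally, since $\mathcal{K} = \mathcal{K}(H)$ is an ideal of $B(H)$ and $w$ is unitary, left multiplication $L_w \colon x \mapsto wx$ is an isometric bijection of $\mathcal{K}$ onto itself with inverse $L_{w^{-1}}$. These are the only structural facts the construction will use.

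Next I would realise both algebras inside $B(H)\otimes\mathcal{K}$. By Definition \ref{gen_by_v_extension} (and using $v^n = u$ to keep exponents in $\{0,\dots,n-1\}$), $A\{v\}$ is the closed linear span of words $a_0 v^{k_1} a_1 \cdots v^{k_m} a_m$ in $A$ and $v$, so $A\{v\}\otimes\mathcal{K}$ is the closed linear span of the simple tensors of such a word with an element of $\mathcal{K}$, and likewise for $A\{v'\}$. I would then define $\Phi$ on these simple tensors by keeping the $A$-letters, replacing each factor $v$ by $v'$, and pushing one copy of $w$ into the $\mathcal{K}$-leg for every replaced factor:
\[
\Phi\bigl((a_0 v^{k_1} a_1 \cdots v^{k_m} a_m)\otimes x\bigr) \;:=\; (a_0 (v')^{k_1} a_1 \cdots (v')^{k_m} a_m)\otimes w^{k_1+\cdots+k_m}x .
\]
By construction $\Phi$ is left-$A$-linear, fixes $A\otimes\mathcal{K}$, and sends $v\otimes x$ to $v'\otimes wx = v'\otimes(\xi\eta^{-1})(u)x$, which is the asserted formula; it is also consistent with the reduction $v^n\otimes x = u\otimes x$ precisely because $w^n = 1$. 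The candidate inverse is the analogous map built from $v'$, $v$ and $w^{-1}$, and $L_{w^{-1}}\circ L_w = \mathrm{id}_{\mathcal{K}}$ shows the two are mutually inverse on the spanning tensors.

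The step I expect to be the main obstacle is showing that $\Phi$ is well defined and isometric. A given $T\in A\{v\}$ admits many word expressions, so one must verify that the recipe above does not depend on the choice; the difficulty is precisely that $w$ is not central in $A$, so the inserted copies of $w$ cannot be freely commuted to one side. The mechanism I would exploit is the substitution $v = wv'$: starting from any relation among words in $A$ and $v$, substitute $v = wv'$ everywhere and use only the commutations $[w,C^*(u)] = 0$ and $[w,v'] = 0$ together with $w^n = 1$ to move the $w$'s past the $u$-letters produced by $(v')^n = u$ until a single total power of $w$ remains; that leftover power is exactly the number $\Phi$ records in the $\mathcal{K}$-leg, so the relation is respected. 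Tensoring with the ideal $\mathcal{K}$ is what makes this work analytically: it turns multiplication by the leftover $w$ into the honest isometry $L_w$ of a tensor factor, so $\Phi$ is norm-preserving on the spanning tensors and extends by continuity to an isometric left-$A$-module isomorphism $A\{\xi(u)\}\otimes\mathcal{K}\to A\{\eta(u)\}\otimes\mathcal{K}$ with the stated action. (When $A$ is non-unital one first reads $v\otimes x$ and the words above in the relative multiplier algebra, which does not affect the argument.)
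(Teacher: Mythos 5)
Your structural setup is correct: $w:=(\xi\eta^{-1})(u)$ is a unitary with $w^n=1$ commuting with $C^*(u)$ and with $\eta(u)$, one has $\xi(u)=w\,\eta(u)$, and left multiplication by $w$ is a bijection of $\mathcal{K}=\mathcal{K}(H)$; your formula on words does restrict to the map displayed in the lemma. The genuine gap is exactly at the step you yourself single out, and your proposed mechanism does not close it. After substituting $\xi(u)=w\,\eta(u)$ in a word $a_0\xi(u)^{k_1}a_1\cdots\xi(u)^{k_m}a_m$ you obtain $a_0w^{k_1}\eta(u)^{k_1}a_1w^{k_2}\eta(u)^{k_2}a_2\cdots a_m$, and to reach $\bigl(a_0\eta(u)^{k_1}a_1\cdots a_m\bigr)$ with a single factor $w^{k_1+\cdots+k_m}$ pushed to one side, each $w^{k_i}$ must be moved past the letters $a_i,\dots,a_m\in A$. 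The only commutations you allow yourself ($[w,C^*(u)]=0$, $[w,\eta(u)]=0$, $w^n=1$) never let $w$ cross an $A$-letter, and $w\notin A'$ in general, which you yourself note. So the argument does not show that two word expressions of the same element of $A\{\xi(u)\}\otimes\mathcal{K}$ receive the same image; in effect you are assuming that $A\{\xi(u)\}$ carries a $\mathbb{Z}_n$-grading by total degree in $\xi(u)$, and nothing in the hypotheses excludes relations in $B(H)$ that mix degrees. This well-definedness is the substantive content of the lemma, and it is left unproved.

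The analytic step is also not valid as stated: checking that $\Phi$ preserves the norm of each elementary spanning tensor says nothing about finite linear combinations, so "norm-preserving on the spanning tensors and extends by continuity" does not yield a bounded, let alone isometric, map on $A\{\xi(u)\}\otimes\mathcal{K}$; to get continuity one would have to realize $\Phi$ as the restriction of a $*$-homomorphism or of conjugation by a multiplier unitary of $B(H)\otimes\mathcal{K}$, which the proposal does not do. For comparison, the paper's own proof consists solely of the one-line functional-calculus identity $\xi(u)=(\xi\circ\eta^{-1})(\eta(u))$, so it offers no more detail than you do; but your explicit word-by-word construction, as written, breaks down at the $A$-letters, so the proof cannot be accepted as complete.
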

\begin{proof}
Follows from the equality $\xi(u)= \xi\eta^{-1}(\eta(u))$.
\end{proof}

\begin{defn}\label{root_n_defn}
A {\it $n^{\mathrm{th}}$ root of identity map} is a Borel-measurable function $\phi \in \ B_{\infty}(\mathbb{C}^*)$  such that
\begin{equation}\label{root_n_eqn}
\phi^n = \mathrm{Id}_{\mathbb{C}^*}.
\end{equation}
 \end{defn}
 \begin{lem}\label{full_sp_lem}
 Let $A$ be a $C^*$-algebra, $u \in U\left(\left(A \otimes \mathcal{K}\right)^+\right)$ is such that $[u]\neq 0 \in K_1(A)$ then $\mathrm{sp}(u)= \mathbb{C}^*=\{z \in \mathbb{C} \ | \ |z| = 1\}$.
 \end{lem}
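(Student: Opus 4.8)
The plan is to prove the contrapositive: if $\mathrm{sp}(u) \neq \mathbb{C}^*$ then $[u] = 0$ in $K_1(A)$. Since $u$ is unitary we always have $\mathrm{sp}(u) \subseteq \mathbb{C}^*$, so the hypothesis "not equal" means there is a point $z_0 \in \mathbb{C}^*$ with $z_0 \notin \mathrm{sp}(u)$. The key elementary observation is that $\mathbb{C}^* \setminus \{z_0\}$ is homeomorphic to an open interval: writing $z_0 = e^{i\theta_0}$, the map $\theta \mapsto e^{i\theta}$ restricts to a homeomorphism of $(\theta_0, \theta_0 + 2\pi)$ onto $\mathbb{C}^*\setminus\{z_0\}$, and its inverse $\mathrm{arg}_0 : \mathbb{C}^*\setminus\{z_0\} \to (\theta_0,\theta_0+2\pi)$ is a continuous single-valued branch of the argument. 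Restricting $\mathrm{arg}_0$ to the compact subset $\mathrm{sp}(u)$ yields a continuous function $h : \mathrm{sp}(u) \to \mathbb{R}$ with $e^{ih(z)} = z$ for all $z \in \mathrm{sp}(u)$.

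Next I would feed $h$ into the continuous functional calculus for the normal (indeed unitary) element $u \in (A \otimes \mathcal{K})^+$. Since $h$ is real-valued and continuous on $\mathrm{sp}(u)$, the element $a := h(u)$ is a self-adjoint element of the unital $C^*$-algebra $(A\otimes\mathcal{K})^+$, and the relation $e^{ih(z)} = z$ on $\mathrm{sp}(u)$ gives $e^{ia} = u$ (composition of functional calculi). Consequently $t \mapsto e^{ita}$, $t \in [0,1]$, is a norm-continuous path of unitaries in $(A\otimes\mathcal{K})^+$ running from $1$ to $u$, so $u$ lies in the connected component of the identity of $U\!\left((A\otimes\mathcal{K})^+\right)$. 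Invoking the identification $K_1(A) \cong K_1(A\otimes\mathcal{K})$ together with the standard fact that for the stable algebra $A\otimes\mathcal{K}$ the $K_1$-class of a unitary in $(A\otimes\mathcal{K})^+$ depends only on its connected component (so that unitaries in the identity component have trivial class), we conclude $[u] = 0 \in K_1(A)$. This contradicts the hypothesis $[u] \neq 0$, and hence $\mathrm{sp}(u) = \mathbb{C}^*$.

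I do not expect any genuine obstacle here. The only points requiring minor care are (i) checking that the chosen branch of the argument is genuinely continuous on all of $\mathbb{C}^* \setminus \{z_0\}$ and that $\mathrm{sp}(u)$ is contained there, both of which are immediate once $z_0 \notin \mathrm{sp}(u)$; and (ii) citing the correct homotopy-invariance statement for $K_1$ of a stable $C^*$-algebra, so that "connected to $1$ in $U\!\left((A\otimes\mathcal{K})^+\right)$" really does force the vanishing of the $K_1$-class. Both are routine, and I expect the final writeup to occupy only a few lines.
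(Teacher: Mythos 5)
Your proof is correct and takes essentially the same route as the paper: both argue by contradiction from a missing spectral point $z_0$, apply continuous functional calculus on the slit circle $\mathbb{C}^*\setminus\{z_0\}$ to obtain a norm-continuous path of unitaries in $\left(A\otimes\mathcal{K}\right)^+$, and conclude $[u]=0\in K_1(A)$, contradicting the hypothesis. The only cosmetic difference is that you connect $u$ to $1$ through the continuous logarithm $u=e^{ia}$, whereas the paper retracts the spectrum onto the antipodal scalar $z_1=-z_0$ and uses $[z_1]=0$.
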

 \begin{proof}
  $\mathrm{sp}(u) \subset \mathbb{C}^*$ since $u$ is an unitary. Suppose $z_0 \in \mathbb{C}$ be such that  $z_0 \notin \mathrm{sp}(u)$ and $z_1 = -z_0$. Let $\varphi: \mathrm{sp}(u) \times [0,1] \to \mathbb{C}^*$ be such that
  \begin{equation*}
  \varphi(z_1e^{i \phi}, t)= z_1 e^{i (1-t)\phi}; \ \phi \in (-\pi, \pi), \ t \in [0,1].
  \end{equation*}
  There is a homotopy $u_t = \varphi(u, t)\in  U((A \otimes \mathcal{K})^+)$ such that $u_0 = u$, $u_1 = z_1$. From $[z_1] = 0 \in K_1(A)$ it follows that $[u]=0\in K_1(A)$. So there is a contradiction which proves this lemma.

 \end{proof}

 \begin{lem}\label{fin_unitary_ext}
 Let $A \to A\{v\}$ be a generated by $v$ extension of degree $n$, such that $v^n = u \in U(A)$. There is the natural action of $\mathbb{Z}_n$ on $A\{v\}$ such that the action of  $\mathbb{Z}_n$ generator is given by $v \mapsto e^{\frac{2\pi i}{n}}v$, and $\left(A, \ A\{v\}, \ \mathbb{Z}_n\right)$ is a Galois triple.
 \end{lem}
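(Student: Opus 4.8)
The plan is to realise $\bigl(A,A\{v\},\mathbb{Z}_n\bigr)$ as the finite case of a Galois quadruple by checking the hypotheses of Theorem~\ref{main_lem} for the module $X={}_{A\{v\}}A\{v\}_A$ equipped with the averaged form \eqref{inv_scalar_product}. First I would set up the $\mathbb{Z}_n$-action. Put $\lambda=e^{2\pi i/n}$ and let the generator $\sigma$ act by $\sigma|_A=\mathrm{id}$ and $\sigma(v)=\lambda v$, i.e.\ $\sigma(v^ka)=\lambda^k v^k a$, $\sigma(av^k)=\lambda^k a v^k$ for $0\le k<n$. That $\sigma$ is a well-defined $*$-automorphism of order $n$ I would deduce from the fact that $A\{v\}$ is $\mathbb{Z}/n$-graded: its generators $v^ka$ and $av^k$ are homogeneous of degree $k\bmod n$, the involution reverses degree, and the only relation among the generators, $v^n=u\in A$, is homogeneous of degree $0$; hence the degree subspaces $B_k$ are well defined closed subspaces with $B_kB_l\subseteq B_{k+l}$, $B_k^*=B_{-k}$, $A\{v\}=\overline{\bigoplus_k B_k}$ and $B_0=A$. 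The map acting by $\lambda^k$ on $B_k$ is the desired $\sigma$, its fixed point algebra is $A=A\{v\}^{\mathbb{Z}_n}$, and this is the natural action $v\mapsto\lambda v$ claimed.

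Next, with $X=A\{v\}$ carrying left $A\{v\}$- and right $A$-multiplication and the form $\langle x,y\rangle_X=\tfrac1n\sum_{g\in\mathbb{Z}_n}g(x^*y)\in A$, the first step makes ${}_{A\{v\}}X_A$ a $\mathbb{Z}_n$-equivariant $A$-rigged $A\{v\}$-module, so by Theorem~\ref{main_lem} and the definition of the algebra subordinated to $\{\xi_i\}$ it suffices to exhibit $\{e_i\}_{i\in I}\subseteq M(A)$ and $\{\xi_i\}_{i\in I}\subseteq A\{v\}$ obeying \eqref{1_mb}, \eqref{1_mkx}, \eqref{ee_xx} and \eqref{g_ort} with $A\{v\}$ subordinated to $\{\xi_i\}$. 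Since $u$ is unitary, $C^*(u)=C(\mathrm{sp}(u))$ is unital with unit $1_{M(A)}$; I would pick a finite partition of unity $1=\sum_{i\in I}a_i$ on $\mathrm{sp}(u)$ with $a_i\ge0$ and each $\mathrm{supp}\,a_i$ an arc short enough that the preimage arcs in $\{z:z^n\in\mathrm{supp}\,a_i\}$ have pairwise disjoint $\mathbb{Z}_n$-rotates, set $e_i:=\sqrt{a_i}(u)\in C^*(u)\subseteq A$ so that $\sum_i e_i^*e_i=\sum_i a_i(u)=1_{M(A)}$ (this is \eqref{1_mb}), fix one preimage arc $\mathcal V_i$, and let $b_i\in C(\mathrm{sp}(v))$ be supported in $\mathcal V_i$ with $b_i(z)^2=a_i(z^n)$ there, and $\xi_i:=\sqrt n\,b_i(v)\in A\{v\}$. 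Then disjointness of the $\mathbb{Z}_n$-rotates of $\mathcal V_i$ forces $\xi_i^*\,g(\xi_i)=0$ for nontrivial $g$, i.e.\ \eqref{g_ort}; averaging $\xi_i^*\xi_i$ over the $n$ sheets above $\mathrm{supp}\,a_i$ gives $\langle\xi_i,\xi_i\rangle_X=a_i(u)=e_i^*e_i$, i.e.\ \eqref{ee_xx}; a direct computation, again using the disjointness, identifies $\xi_i\rangle\langle\xi_i$ with left multiplication by $b_i^2(v)$, so $\sum_{g\in\mathbb{Z}_n}\sum_{i\in I}g\xi_i\rangle\langle g\xi_i$ is left multiplication by $\sum_i a_i(u)=1_{M(A\{v\})}$, which is \eqref{1_mkx}; and, on refining the partition, the $C^*$-algebra generated by these operators and $M(A)$ is all of $A\{v\}$, so $A\{v\}$ is the subordinated algebra. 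Theorem~\ref{main_lem} then yields that ${}_{A\{v\}}X_A$ is a $\mathbb{Z}_n$-Galois $A$-rigged $A\{v\}$-module, hence $\bigl(A,A\{v\},\mathbb{Z}_n\bigr)$ is a Galois triple.

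The hard parts will be two. First, the honesty of the $\mathbb{Z}/n$-grading in the first step — that the $B_k$ are genuinely linearly independent even when $v$ does not normalise $A$ — and, closely related, the membership $\xi_i=\sqrt n\,b_i(v)\in A\{v\}$: the ``one-sheet'' localised continuous functions of $v$, which are precisely what make \eqref{g_ort} hold, must lie in the extension rather than only in $M(A\{v\})$. When $A$ is unital both are immediate, since then $v=v\cdot1_A\in A\{v\}$, so $C^*(v)\subseteq A\{v\}$ and $b_i(v)\in C^*(v)$, and the grading is the standard one. In general I would write $b_i(v)=v^{k_i}\bigl(v^{-k_i}b_i(v)\bigr)$ and use Definition~\ref{gen_by_v_extension} to place $v^{-k_i}b_i(v)$ in $\overline{C^*(u)}\subseteq A$ against an approximate unit of $A$; alternatively, Lemma~\ref{fin_morita_lem} permits first replacing $v$ by a Borel branch $\phi(u)$ of the $n$-th root, reducing the whole matter to functional calculus of the single unitary $u$, where both points become routine.
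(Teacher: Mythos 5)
Your proposal is correct and follows essentially the same route as the paper: both proofs reduce the statement to Theorem~\ref{main_lem} by taking $e_i$ to be functional calculus of $u$ applied to a partition of unity on the circle and $\xi_i$ to be functional calculus of $v$ applied to its one-sheet lifts (the paper transports the data $e^S_i,\xi^S_i$ already built for the covering $\widetilde S^1\to S^1$ in \ref{comm_exm}, while you rebuild the same partition directly), and then the disjointness of the $\mathbb{Z}_n$-translates of the sheets yields \eqref{1_mb}--\eqref{g_ort}. Your extra care about the well-definedness of the gauge action via the $\mathbb{Z}/n$-grading, the $1/|G|$ normalization in \eqref{inv_scalar_product}, and the membership $\xi_i\in A\{v\}$ (automatic here since $u\in U(A)$ forces $A$ unital) addresses points the paper's proof passes over silently, but does not change the method.
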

 \begin{proof}
  Let $\pi_S : \widetilde{S}^1 \to S^1$ be the $n$-listed covering projection of the circle. The tilde notation  $\widetilde{S}^1$ of the circle is used for clarity. It is well known that $G\left(\widetilde{S}^1 \ | \ S^1\right) = \mathbb{Z}_n$. From \ref{comm_exm} it follows that $\left(C(S^1), \ (C(\widetilde{S}^1), \ \mathbb{Z}_n \right)$ is a Galois triple. From the theorem \ref{main_lem} it follows that there is a set $I$, an sets  $\{e^S_i\}_{i\in I} \subset C(S^1)$, $\{\xi^S_i\}_{i\in I} \subset C(\widetilde{S}^1)$ such that
 \begin{enumerate}
 \item
 \begin{equation*}
 1_{C(S^1)} =  \sum_{i\in I}^{}e^{S*}_ie^S_i,
 \end{equation*}
 \item
 \begin{equation*}
 1_{C(\widetilde{S}^1)} = \sum_{g\in \mathbb{Z}_n}^{} \sum_{i \in I}^{}g\xi^S_i\rangle \langle g\xi^S_i ,
 \end{equation*}
 \item
 \begin{equation*}
 \langle \xi^S_i, \xi^S_i \rangle_{C(S^1)} = e_i^{S*}e^S_i,
 \end{equation*}
 \item
 \begin{equation*}
 \langle g\xi^S_i, \xi^S_i\rangle_{C(S^1)}=0, \ \text{for any nontrivial} \ g \in \mathbb{Z}_n.
 \end{equation*}
 Let $u^S \in U(C(S^1))$ and $\widetilde{u}^S \in U(C(\widetilde{S}^1))$ be natural unitary generators of $C(S^1)$ and $C(\widetilde{S}^1)$ respectively. Then $e^S_i$ and $\xi^S_i$ are continuous functions of $u^S$ and $\widetilde{u}^S$ respectively, i.e. $e^S_i = e^S_i(u^S)$ and $\xi^S_i=\xi^S_i(\widetilde{u}^S)$. Let  $e_i \in A$, $\xi_i \in A\{v\}$ be given by
 \begin{equation*}
e_i = e^S_i(u),
 \end{equation*}
 \begin{equation*}
\xi_i = \xi_i^S(v).
 \end{equation*}
  From previous conditions of this proof it follows that $e_i$, $\xi_i$ satisfy conditions of the theorem \ref{main_lem}.
 \end{enumerate}
 \end{proof}
 \begin{exm}
A Galois triple $\left(A_{\theta}, A_{\theta'}, \mathbb{Z}_m\times\mathbb{Z}_n\right)$ given by \eqref{nt_fin_triple} can be constructed as composition of extensions generated by unitary elements $u'$, $v'$ such that $u'^m=u$, $v'^n=v$, i.e. $A_{\theta'}=\left(A_{\theta}\{u'\}\right)\{v'\}$.
 \end{exm}
\begin{empt}\label{inf_unitary}
Let $A$ be a $C^*$-algebra, and let $u \in U(A)$ be such that
\begin{equation*}
\mathrm{sp}(u)= \mathbb{C}^*=\{z \in \mathbb{C} \ | \ |z| = 1\},
\end{equation*}
\begin{equation}\label{inf_cond}
u \neq v^n, \ \forall v \in A, n \neq 1 \in \mathbb{N}.
\end{equation}
Let $A \to B(H)$ be a faithful representation. Similarly to construction from \ref{inf_cov_nct} we can construct a representation $\pi_u: C_0(\mathbb{R}):\to B(H)$. Let $\pi_{R}: \mathbb{R} \to S^1$ be a well known covering projection, it is known that $G\left(\mathbb{R} \ | \ S^1\right)=G\left(C_0(\mathbb{R}) \ | \ C(S^1)\right)= \mathbb{Z}$. Let $X_0 \subset B(H)$ be an involutive algebra generated by operators $\pi_u(x)$; ($x\in C_0(\mathbb{R})$) and $a \in A$. From $G\left(C_0(\mathbb{R}) \ | \ C(S^1)\right)= \mathbb{Z}$ it follows that $\mathbb{Z}$ naturally acts on $C_0(\mathbb{R})$. So there is a natural action of $\mathbb{Z}$ on $\pi_u(C_0(\mathbb{R}))$, and therefore $\mathbb{Z}$ naturally acts on $X_1$. Let $X_1 \subset X_0$ be such that
\begin{equation*}
X_1 = \left\{x \in  X_0 \ | \ \exists a\in A; \ \sum_{g \in \mathbb{Z}}g(x^*x) =a  \right\}.
\end{equation*}
where a sum of the series means the strict convergence.
The $X_1$ space is a pre-$A$-rigged space such that $A$-valued form is given by
\begin{equation*}
\langle x, y \rangle_{A} = \sum_{g \in \mathbb{Z}} g(x^*y)\in A
\end{equation*}
where a sum of the series means the strict convergence. Let $X_A$ be the norm completion of $X_1$, so $X_A$ is an $A$-rigged space. Let us show that $\left(A, \widetilde{A}, _{\widetilde{A}}X_A, \mathbb{Z}\right)$ is a Galois quadruple, where $\widetilde{A}$ is the subordinated algebra. Similarly we can define $_{C_0(\mathbb{R})}X_{C(S^1)} \subset C_0(\mathbb{R})$ which is a $C(S^1)$-rigged space. From \ref{comm_exm} it follows that there are sets $\{e^S_i\}_{i\in I} \subset C(S^1)$, $\{\xi^R_i\}_{i\in I} \subset C_0(\mathbb{R})$ such that
 \begin{enumerate}
 \item
 \begin{equation*}
 1_{C(S^1)} =  \sum_{i\in I}^{}e^{S*}_ie^S_i,
 \end{equation*}
 \item
 \begin{equation*}
 1_{M(C(\mathbb{R}))} = \sum_{g\in \mathbb{Z}}^{} \sum_{i \in I}^{}g\xi^R_i\rangle \langle g\xi^R_i ,
 \end{equation*}
 \item
 \begin{equation*}
 \langle \xi^R_i, \xi^R_i \rangle_{C(S^1)} = e_i^{S*}e^S_i,
 \end{equation*}
\item
 \begin{equation*}
 \langle g\xi^R_i, \xi^R_i\rangle_{C(S^1)}=0, \ \text{for any nontrivial} \ g \in \mathbb{Z}.
 \end{equation*}
  \end{enumerate}
Similarly to lemma \ref{fin_unitary_ext} we can define elements $e_i = e_i^S(u)\in A$, $\xi_i = \pi_u(\xi_i^R) \in X_A$ such that these elements satisfy to conditions of the theorem \ref{main_lem}. We suppose that $\widetilde{A}$ is a subordinated to $\{\xi_i\}_{i\in I}$ algebra. So $\left(A, \widetilde{A}, _{\widetilde{A}}X_A, \mathbb{Z}\right)$ is a Galois quadruple.
\end{empt}
\begin{defn}
The Galois quadruple $\left(A, \widetilde{A}, _{\widetilde{A}}X_A, \mathbb{Z}\right)$ described in the lemma \ref{inf_unitary} is said to be an {\it infinite extension associated with unitary} $u$. Let denote $A\{\{u\}\}=\widetilde{A}$.
\end{defn}

\begin{rem}
For any $f\in C_0(\mathbb{R})$ operator $\pi_u(f)$ is a multiplier of $A\{\{u\}\}$.
\end{rem}
\begin{rem}
An infinite extension associated with unitary is non-unique because a sequence \eqref{root_sequence} is non-unique. However this construction us unique is case of stale $C^*$-algebras, see lemma \ref{infin_morita_lem}.
\end{rem}
\begin{defn}\cite{blackadar:ko}
Let $A$ be a $C^*$-algebra. The algebra $A \otimes \mathcal{K}$ is said to be the {\it stable algebra} of $A$.
\end{defn}
\begin{rem}
From $\mathcal{K} \otimes \mathcal{K}\approx\mathcal{K}$ it  follows that $A \otimes \mathcal{K}\otimes\mathcal{K} \approx A \otimes \mathcal{K}$,  i.e. if $A$ is a stable $C^*$-algebra then $A \approx A \otimes\mathcal{K}$. If $X$ is a rigged $A$-space then $X \otimes \mathcal{K}$ is a $A \otimes \mathcal{K}$-rigged space.
\end{rem}

\begin{lem}\label{infin_morita_lem}
Let  $\left(A \otimes \mathcal{K}, \widetilde{A}' \otimes \mathcal{K}, X' \otimes \mathcal{K}, \mathbb{Z}\right)$,  $\left(A \otimes \mathcal{K}, \widetilde{A}'' \otimes \mathcal{K}, X'' \otimes \mathcal{K}, \mathbb{Z}\right)$ be infinite extensions associated with unitary $u \in M(A\otimes \mathcal{K})$. Then there is a natural isomorphism $X'\otimes \mathcal{K} \approx X'' \otimes\mathcal{K}$ of $A\otimes \mathcal{K}$-rigged spaces. From this isomorphism it follows the *-isomophism $\widetilde{A}' \otimes \mathcal{K} \approx \widetilde{A}'' \otimes \mathcal{K}$.
\end{lem}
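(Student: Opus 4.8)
The plan is to realise both stabilised infinite extensions as inductive limits of the finite extensions of Lemma~\ref{fin_morita_lem} and then to compare the two towers level by level. Recall from \ref{inf_cov_nct} and \ref{inf_unitary} that an infinite extension associated with $u$ is produced from a sequence $\{\phi_{i}\}_{i\in\mathbb{N}}$ of Borel square roots of the identity on $\mathbb{C}^{*}=\mathrm{sp}(u)$ (cf.\ \eqref{root_sequence}): setting $u_{2^{n}}=\phi_{n}\bigl(\phi_{n-1}(\cdots\phi_{1}(u)\cdots)\bigr)$ one obtains a unitary $2^{n}$-th root of $u$ in $M(A\otimes\mathcal{K})$, and $\widetilde{A}\otimes\mathcal{K}$ (resp.\ $X\otimes\mathcal{K}$) is, up to the passage to the subordinated subalgebra, the inductive limit of the tower of generated-by-$u_{2^{n}}$ extensions $(A\otimes\mathcal{K})\{u_{2^{n}}\}$ (resp.\ of the associated tower of finite $\mathbb{Z}_{2^{n}}$-Galois $(A\otimes\mathcal{K})$-rigged modules $X_{2^{n}}$ furnished by Theorem~\ref{main_lem}); the $\mathbb{Z}$-action is the translation action on $C_{0}(\mathbb{R})=\varinjlim_{n}C_{0}\bigl((-2^{n}\pi,2^{n}\pi)\bigr)$, compatible with all connecting maps. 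So it suffices, given two square-root sequences $\{\phi_{i}'\}$ and $\{\phi_{i}''\}$, to produce a coherent family of $\mathbb{Z}_{2^{n}}$-equivariant isomorphisms at each finite level and pass to the limit.

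First I would treat a fixed level $n$. Write $u_{2^{n}}'=\xi_{n}(u)$ and $u_{2^{n}}''=\eta_{n}(u)$ for the two $2^{n}$-th roots, where $\xi_{n},\eta_{n}\in B_{\infty}(\mathbb{C}^{*})$ satisfy $\xi_{n}^{2^{n}}=\eta_{n}^{2^{n}}=\mathrm{Id}_{\mathbb{C}^{*}}$ (that $\mathrm{sp}(u)=\mathbb{C}^{*}$ is the standing hypothesis \eqref{inf_cond}, and where applicable also follows from Lemma~\ref{full_sp_lem}). Lemma~\ref{fin_morita_lem} then yields a left $A\otimes\mathcal{K}$-module $*$-isomorphism
\begin{equation*}
\Phi_{n}\colon (A\otimes\mathcal{K})\{u_{2^{n}}'\}\ \xrightarrow{\ \approx\ }\ (A\otimes\mathcal{K})\{u_{2^{n}}''\},\qquad u_{2^{n}}'\otimes x\longmapsto u_{2^{n}}''\otimes \xi_{n}\eta_{n}^{-1}(u)\,x,
\end{equation*}
with $\xi_{n}\eta_{n}^{-1}$ the $2^{n}$-th-root-of-unity-valued Borel function $z\mapsto\xi_{n}(z)/\eta_{n}(z)$. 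By inspection $\Phi_{n}$ intertwines the $\mathbb{Z}_{2^{n}}$-actions $u_{2^{n}}'\mapsto e^{2\pi i/2^{n}}u_{2^{n}}'$ and $u_{2^{n}}''\mapsto e^{2\pi i/2^{n}}u_{2^{n}}''$, and — since the finite rigged modules $X_{2^{n}}'$, $X_{2^{n}}''$ are assembled from the same $\{e_{i}\}$, $\{\xi_{i}\}$ data of Theorem~\ref{main_lem} — it transports $X_{2^{n}}'\otimes\mathcal{K}$ isometrically onto $X_{2^{n}}''\otimes\mathcal{K}$ as $A\otimes\mathcal{K}$-rigged spaces.

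The step I expect to be the main obstacle is the coherence of $\{\Phi_{n}\}$ with the connecting maps, i.e.\ the identity $\Phi_{n+1}\circ\iota_{n}'=\iota_{n}''\circ\Phi_{n}$, where $\iota_{n}^{\bullet}\colon(A\otimes\mathcal{K})\{u_{2^{n}}^{\bullet}\}\hookrightarrow(A\otimes\mathcal{K})\{u_{2^{n+1}}^{\bullet}\}$ is the inclusion arising from $u_{2^{n}}^{\bullet}=(u_{2^{n+1}}^{\bullet})^{2}$. Because the towers are built by iterated square roots, the defining Borel functions satisfy $\xi_{n}=\xi_{n+1}^{2}$ and $\eta_{n}=\eta_{n+1}^{2}$ pointwise on $\mathbb{C}^{*}$, whence the intertwiners obey $(\xi_{n+1}\eta_{n+1}^{-1})^{2}=\xi_{n}\eta_{n}^{-1}$; substituting this into the explicit formula for $\Phi_{n+1}$ matches the two composites. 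This is the one genuinely computational point, and the place where the (non-unique) choice of square roots must be tracked carefully; it is also the reason stability is needed, exactly as in Lemma~\ref{fin_morita_lem}.

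Granted the coherence, $\{\Phi_{n}\}$ is an isomorphism of inductive systems of $C^{*}$-algebras (respectively of $\mathbb{Z}$-equivariant rigged modules), so it induces on the limits a natural isomorphism $X'\otimes\mathcal{K}\xrightarrow{\ \approx\ }X''\otimes\mathcal{K}$ of $A\otimes\mathcal{K}$-rigged spaces intertwining the translation $\mathbb{Z}$-actions. Finally, the subordinated algebra attached to a Galois rigged module is, by the construction following Theorem~\ref{main_lem}, determined by that module together with the chosen generators $\{\xi_{i}\}$, all of which $\Phi_{n}$ (hence the limit isomorphism) respects; therefore the rigged-module isomorphism carries $\widetilde{A}'\otimes\mathcal{K}$ onto $\widetilde{A}''\otimes\mathcal{K}$ and is a $*$-isomorphism, as claimed.
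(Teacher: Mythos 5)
Your overall strategy -- compare the two root systems level by level via the quotient unitaries $\xi_n\eta_n^{-1}(u)$ placed in the $\mathcal{K}$-leg, then pass to a limit -- is the same idea the paper uses; but the paper implements it on the dense subalgebra $\pi_u(C_c(\mathbb{R}))\otimes\mathcal{K}$ (each $f$ with $\mathrm{supp}(f)\subset(-2^n\pi,2^n\pi)$ is a function of $u_{2^n}$ vanishing at the basepoint, and the twist is attached to such $f$), not on a tower of the unital finite extensions. This difference matters, and your reduction has a genuine structural gap: $\widetilde{A}\otimes\mathcal{K}$ is \emph{not} the inductive limit of the algebras $(A\otimes\mathcal{K})\{u_{2^n}\}$, even ``up to passage to the subordinated subalgebra''. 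The connecting inclusions $u_{2^n}=u_{2^{n+1}}^2$ produce as limit a solenoid-type algebra: it contains elements such as $u_{2^n}a$, whose commutative models are periodic (nowhere vanishing at infinity) functions, and these never lie in the subordinated algebra $A\{\{u\}\}\otimes\mathcal{K}$, which by \ref{inf_unitary} is built from $\pi_u(C_0(\mathbb{R}))$, i.e.\ from the \emph{non-unital} pieces $C'(u_{2^n})$ of \eqref{u_2_seq} (functions of $u_{2^n}$ vanishing at $1$). Also note $u_{2^n}\notin M(A\otimes\mathcal{K})$ in general -- that is the whole point of the construction -- so the finite Galois data you invoke do not sit inside the infinite one in the way your tower requires.

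The second gap is exactly at the step you flag as the crux. Writing $Q_n=\xi_n\eta_n^{-1}(u)$, the relation you derive, $(\xi_{n+1}\eta_{n+1}^{-1})^2=\xi_n\eta_n^{-1}$, says $Q_n=Q_{n+1}^2\neq Q_{n+1}$ (all these unitaries commute, being Borel functions of $u$). Lemma \ref{fin_morita_lem} only prescribes $\Phi_{n+1}$ on the generator, $u'_{2^{n+1}}\otimes x\mapsto u''_{2^{n+1}}\otimes Q_{n+1}x$; if, as your argument implicitly assumes, $\Phi_{n+1}$ twists the $\mathcal{K}$-leg uniformly by $Q_{n+1}$, then on the image of level $n$ one gets
\begin{equation*}
\Phi_{n+1}\bigl(u'_{2^{n}}\otimes x\bigr)=u''_{2^{n}}\otimes Q_{n+1}x\neq u''_{2^{n}}\otimes Q_{n}x=\Phi_{n}\bigl(u'_{2^{n}}\otimes x\bigr),
\end{equation*}
so $\Phi_{n+1}\circ\iota'_n\neq\iota''_n\circ\Phi_n$: the identity $Q_n=Q_{n+1}^2$ is precisely the obstruction, not the verification. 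To make consecutive levels compatible the twist must depend on the element (on its ``spectral window'' or Fourier mode: the $j$-th power of the generator needs the twist $Q_{n+1}^{\,j}$), and one must then prove that such a mode/level-dependent assignment is well defined, isometric for the rigged-module inner products, and $\mathbb{Z}$-equivariant -- none of which follows from quoting Lemma \ref{fin_morita_lem}. This is the content the paper's proof supplies (however tersely) by defining the map per compactly supported $f$ and extending by density from $C_c(\mathbb{R})\subset C_0(\mathbb{R})$, and it is missing from your proposal.
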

\begin{proof}
Construction \ref{inf_unitary} implies representations $A \to B(H)$ and $\pi_u:C_0(\mathbb{R})\to B(H)$, where $\pi_u$ is associated with a sequence $\{\phi_i\}_{i\in \mathbb{N}}$ be an arbitrary sequence of
 (non-unique) Borel-measurable functions $\{\phi_i \}_{i \in \mathbb{N}}$ which satisfies \eqref{root_sequence}. Let $\{\phi'_i\}$ (resp. $\{\phi''_i\}$ ) be Borel-measurable functions which corresponds to a representation $\pi'_u:C_0(\mathbb{R})\to B(H)$ (resp. $\pi''_u:C_0(\mathbb{R})\to B(H)$). Suppose that $\pi'_u$ and $\pi''_u$ are used for construction of $X'$ and $X''$ respectively. Let us construct the isomorphism $\varphi: \mathrm{Im}\pi'_u|_{C_c(\mathbb{R})} \otimes \mathcal{K} \approx \mathrm{Im}\pi''_u|_{C_c(\mathbb{R})} \otimes \mathcal{K}$ such that $\pi''_u|_{C_c(\mathbb{R})} \otimes \mathrm{Id}_{\mathcal{K}}= \varphi \circ \pi'_u|_{C_c(\mathbb{R})} \otimes \mathrm{Id}_{\mathcal{K}}$ and $\pi'_u|_{C_c(\mathbb{R})} \otimes \mathrm{Id}_{\mathcal{K}}= \varphi^{-1} \circ \pi''_u|_{C_c(\mathbb{R})} \otimes \mathrm{Id}_{\mathcal{K}}$, where $\otimes$ means an algebraic tensor product. Let $f \in C_c(\mathbb{R})$ be such that $\mathrm{supp}(f)\subset (-2^n\pi, 2^n\pi) $. Then $\pi'_u(f)=\overline{f}(\phi'_{n-1}\circ ... \circ \phi'_1(u))$, $\pi''_u(f)=\overline{f}(\phi''_{n-1}\circ ... \circ \phi''_1(u))$, where $\overline{f}\in C(S^1)$ is given by $\overline{f}(\theta)=f(\theta/2^n)$ ($\theta \in (-\pi,\pi)$). The isomorphism  $\varphi$ is given by
 \begin{equation*}
\overline{f}\left(\phi'_{n-1}\circ ... \circ \phi'_1(u)\right) \otimes k \mapsto \overline{f}\left( \phi''_{n-1}\circ ... \circ \phi''_1(u) \right)\otimes Tk
 \end{equation*}
 where $T = \overline{f}\left((\phi'_{n-1}\circ ... \circ \phi'_1 ) \left(\phi''_{n-1}\circ ... \circ \phi''_1\right)^{-1}(u)\right)\in B(H)$ and $k \in \mathcal{K}$.
  The map $\varphi$ can be extended to the unique isomorphism $\overline{\varphi}: \mathrm{Im}\pi'_u \otimes \mathcal{K} \approx \mathrm{Im}\pi''_u \otimes \mathcal{K}$, because $C_c(\mathbb{R})\subset C_0(\mathbb{R})$ is a dense subalgebra. The isomorphism $\overline{\varphi}$ gives isomorphisms of all objects in construction \ref{inf_cov_nct}. In particular we have $X' \otimes \mathcal{K} \approx X'' \otimes \mathcal{K}$.
\end{proof}
\begin{rem}
Lemma \ref{infin_morita_lem} is the infinite analogue of the lemma \ref{fin_morita_lem}.
\end{rem}
\begin{exm}
A Galois quadruple $\left(A_{\theta}, \ \widetilde{A}_{\theta}, \ _{\widetilde{A}_{\theta}}X_{A_{\theta}}, \mathbb{Z}^2\right)$ given by \eqref{nt_inf_quadruple} is a particular case of this construction, i.e. $\widetilde{A}_{\theta}\approx \left(A_{\theta}\{\{u\}\}\right)\{\{v\}\}$.
\end{exm}
\section{Noncommutative covering projections and $K$-homology}
\paragraph{} This section contains described in \cite{ivankov:nc_cov_k_hom} constructions. Some results concerned with infinite covering projections has been added.
\begin{empt}It is known that $K_1(S^1)\approx \mathbb{Z}$. If $x$ is a generator of $K(S^1)$ then for any topological space $\mathcal{X}$ there is a natural homomorphism $\varphi_{K}:\pi_1(\mathcal{X})\rightarrow K_1(\mathcal{X})$ given by
\begin{equation}\label{pi1_to_K1_hom}
[f] \mapsto K_1(f)(x)
\end{equation}
where $f$ is a representative of $[f]\in \pi_1(\mathcal{X})$. This homomorphism does not depend on a basepoint because $K_1(\mathcal{X})$ is an abelian group . So the basepoint is omitted. Let $K_{11}(\mathcal{X})\subset K_1(\mathcal{X})$ be the image of $\varphi_K$. Then $K_{11}(\mathcal{X})$ is a homotopical invariant.
\begin{exm}\label{circle_cov}
We have a natural isomorphism $\varphi_{K}:\pi_1(S^1) \to K_1(S^1)$. From $\pi_1(S^1)=\mathbb{Z}$ it follows that there is a infinite covering projection $\mathbb{R} \rightarrow S^1$.
\end{exm}
\begin{exm}\label{n_cirle_cov}
Let  $f: S^1 \rightarrow S^1$ be an $n$-listed covering projection, $C_{f}$ is the mapping cone \cite{spanier:at} of $f$. Then $\pi_1(C_{f})\approx K_1(C_f) \approx\mathbb{Z}_n$ and there is a natural isomorphism $\varphi_{K}: \pi_1(C_{f}) \to K_1(C_{f})$. There is $n$ - listed universal covering projection $f_n: \widetilde{C_{f}}\rightarrow C_{f}$.
\end{exm}\label{n_cirle_ex}
Finitely listed covering projections depend of fundamental group. Any epimorphism $\pi_1(\mathcal{X}) \rightarrow \mathbb{Z}$ (resp. $\pi_1(\mathcal{X}) \rightarrow \mathbb{Z}_n$) corresponds to the infinite sequence of finitely listed covering projections (resp. an $n$ - listed covering projection). If $\varphi : \pi_1(\mathcal{X}) \rightarrow G$ is an epimorphism ($G\approx \mathbb{Z}$ or $G\approx \mathbb{Z}_n$) such that $ \mathrm{ker} \ \varphi_K\subset \mathrm{ker} \ \varphi$ then there is an algebraic construction of these covering projections which is described in this article.
\end{empt}
\subsection{Universal coefficient theorem}

\paragraph{} The universal coefficient theorem \cite{blackadar:ko} states (in particular) a relationship between $K$ - theory and $K$- homology. For any $C^*$-algebra $A$ there is a natural homomorphism
\begin{equation}\label{free_spec_eqn}
\gamma: KK_1(A, \mathbb{C}) \to \mathrm{Hom}(K_1(A), K_0(\mathbb{C})) \approx \mathrm{Hom}(K_1(A), \mathbb{Z})
\end{equation}
which is the adjoint of following pairing
\begin{equation}\nonumber
KK(\mathbb{C}, A) \otimes KK(A, \mathbb{C}) \to KK(\mathbb{C}, \mathbb{C}).
\end{equation}
If $\tau \in KK^1(A, \mathbb{C})$ is represented by extension
\begin{equation}\nonumber
0 \to  \mathbb{C} \to D \to A \to  0
\end{equation}
then $\gamma$ is given as connecting maps $\partial$ in the associated six-term exact sequence of $K$-theory
\break

\begin{tikzpicture}\label{six_term}
  \matrix (m) [matrix of math nodes,row sep=3em,column sep=4em,minimum width=2em]
  {
     K_0(\mathbb{C}) & K_0(D) & K_0(A) \\
    K_1(\mathbb{C}) & K_1(D) & K_1(A) \\};
  \path[-stealth]
    (m-1-1) edge node [left] {} (m-1-2)
    (m-1-2) edge node [left] {} (m-1-3)
    (m-2-2) edge node [left] {} (m-2-1)
    (m-2-3) edge node [left] {} (m-2-2)
    (m-2-1) edge node [left] {$\partial$} (m-1-1)
    (m-1-3) edge node [left] {$\partial$} (m-2-3);.
\end{tikzpicture}


If $\gamma(\tau)=0$ for an extension $\tau$ then the six-term $K$-theory exact sequence degenerates into two short exact sequences
\begin{equation}\nonumber
0 \to K_i(A) \to K_i(D) \to K_i(\mathbb{C}) \to 0; \ (i=0,1)
\end{equation}
and thus determines an element $\kappa(\tau)\in \mathrm{Ext}^1(K_*(A), K_*(\mathbb{C})$.
In result we have a sequence of abelian group homomorphisms
\begin{equation}\nonumber
\mathrm{Ext}^1(K_0(A), K_0(\mathbb{C})) \to KK^1(A, \mathbb{C}) \to \mathrm{Hom}(K_1(A), K_0(\mathbb{C}))
\end{equation}
such that composition of the homomorphisms is trivial. Above sequence can be rewritten by following way
\begin{equation}\label{uct_c}
\mathrm{Ext}^1(K_0(A), \mathbb{Z}) \to K^1(A) \to \mathrm{Hom}(K_1(A), \mathbb{Z})).
\end{equation}
If $G$ is an abelian group that
\begin{equation}\nonumber
\mathrm{Ext}^1(G, \mathbb{Z}) = \mathrm{Ext}^1(G_{tors}, \mathbb{Z}),
\end{equation}
\begin{equation}\nonumber
 \mathrm{Hom}(G, \mathbb{Z}) =  \mathrm{Hom}(G / G_{tors}, \mathbb{Z})).
\end{equation}
From (\ref{uct_c}) it follows that $K^1(A)$ depends on $K_0(A)_{tors}$ and $K_1(A)/K_1(A)_{tors}$. We say that dependence(\ref{uct_c}) on $K_0(A)_{tors}$ is  a {\it free special case}  and dependence (\ref{free_spec_eqn}) is a {\it torsion special case}.

\subsection{Free special case}\label{free_spec_case}

\begin{exm}\label{circle_cov_proj}
The infinite covering projection of example \ref{circle_cov} can be constructed algebraically.  From (\ref{uct_c}) it follows that  $K_1(C(S^1))\approx \mathbb{Z}$. Let $u \in U(C(S^1))$ is such that $[u] \in K_1(S^1)$ is a generator of $K_1(S^1)$. Application of \ref{inf_unitary} gives an infinite covering projection $\left(C(S^1), \ C_0(\mathbb{R}), \ _{C_0(\mathbb{R})}X_{C(S^1)}, \ \mathbb{Z}\right)$ and $C_0(\mathbb{R})\approx C(S^1)\{\{u\}\}$.
\end{exm}
\begin{empt}\label{free_special_case_general} {\it General construction}.
Construction of example \ref{circle_cov_proj} can be generalized. Let $A$ be a $C^*$-algebra such that $K^1(A)\approx G \oplus \mathbb{Z}$. From (\ref{uct_c}) it follows that
\begin{equation}\label{k1_direct_sum}
K_1(A)=G' \oplus \mathbb{Z}[u]
\end{equation}
where $u \in U((A \otimes \mathcal{K})^+)$ satisfies condition \eqref{inf_cond}. Construction from \ref{inf_unitary} gives a Galois quadruple $\left(A, \ A\{\{u\}\}, \ _{A\{\{u\}\}}X_A, \ \mathbb{Z} \right)$. If $A$ is stable then from the lemma \ref{infin_morita_lem} it follows that the Galois quadruple is unique.

\end{empt}
\begin{rem}\label{pi1_belongs}
If $A = C_0(\mathcal{X})$ then construction supplies a covering projection if $x\in K_1(\mathcal{X})$ belongs to image of $\pi_1(\mathcal{X}) \rightarrow K_1(\mathcal{X})$.
\end{rem}
\begin{exm}
The described in subsection \ref{nc_torus_covering} covering projection of the noncommutative torus is a particular case of the free special case.
 \end{exm}
\begin{exm}\label{s_3_covering_sample}
It is known that $S^3$ is homeomorphic to $SU(2)$, and
\begin{equation*}
K_1\left(C(S^3)\right)=K_1(C(SU(2))) \approx \mathbb{Z}.
\end{equation*} 
The group $K_1(C(SU(2)))$ is generated by unitary $u \in U(C(SU(2) \otimes \mathbb{M}_2(\mathbb{C}))$ such that $u$ can be regarded as the natural inclusion map $SU(2) \subset \mathbb{M}_2(\mathbb{C})$ and $\mathrm{sp}(u)=\{z\in \mathbb{C}\ | \ |z|=1\}$. Let $A$ be a continuous trace algebra given by $A=C(SU(2)) \otimes \mathbb{M}_2(\mathbb{C})$. Let $\phi$ be a $n^{\mathrm{th}}$ - root of identity map, and $v = \phi(u)$. From lemma \ref{fin_unitary_ext} it follows that $\left(A, \ A\{v\}, \ \mathbb{Z}_n\right)$ is a Galois triple. From subsection \ref{cont_tr_exm} it follows that $A\{v\}$ is a continuous trace algebra and $\widehat{A\{v\}}\to \hat A$ is a (topological) covering projection. Since $\hat A \approx S^3$ and 
 $\pi_1(S^3)=0$ it follows that all covering projections of $\hat A$ are trivial, i.e. covering space is homeomorphic to a disconnected union of homeomorphic to $\hat A$ spaces
 \begin{equation*}
\widehat{A\{v\}} \approx \bigsqcup_{g\in \mathbb{Z}_n}\hat A.
 \end{equation*}
Above homeomorphism can be constructed explicitly. Let $\rho: A\{v\}\to \mathbb{M}_2(\mathbb{C})$ be a irreducible representation. From constriction of $u$ it follows that $\rho(u) \in SU(2)$, so $\mathrm{det}(\rho(u))=1$. From $u=v^n$ it follows that $\mathrm{det}(\rho(v))^n=1$. Let 
\begin{equation*}
\widehat{A\{v\}}_k = \left\{\rho \in  \widehat{A\{v\}} \ | \ \mathrm{det}(\rho(v)) = e^{\frac{2\pi i k}{n}} \right\}.
\end{equation*}
 Then $\widehat{A\{v\}} = \bigsqcup_{k = 0,..., n-1}\widehat{A\{v\}}_k$ and $\widehat{A\{v\}}_k \approx \hat A$ for $k=0,..., n-1$. This example is a specific case of boring example \ref{unconn_exm} because 
 \begin{equation*}
 A\{v\} \approx \bigoplus_{g \in \mathbb{Z}_n}A,
 \end{equation*}
 i.e. $A\{v\}$ is not irreducible.
\end{exm}

\subsection{Torsion special case}\label{torsion_special_case}

\begin{exm}\label{n_cov_sample}
The universal covering projection from example \ref{n_cirle_cov} can be constructed algebraically.
Let $f: S^1 \to S^1$ be a $n$ listed covering projection  of the circle, $C_f$ is the (topological) mapping cone of $f$. $C(f): C(S^1) \to C(S^1)$ is a corresponding *- homomorphism of $C^*$-algebras ($u \mapsto u^n$),
where $u \in U(C(S^1))$ is such that $[u]\in K_1(C(S^1))$ is a generator. Algebraic mapping cone \cite{blackadar:ko}  $C_{C(f)}$  of $C(f)$ corresponds to the topological space $C_f$, i.e. $C_{C(f)} \approx C(C_f)$.  Otherwise $C(C_f)$ is an algebra of continuous maps $f [0, 1)\to C(\mathbb{C}^*)$ such that
\begin{equation}\nonumber
f(0) = \sum_{k \in \mathbb{Z}} a_k u^{kn}, \ a_k \in \mathbb{C}.
\end{equation}
A map $v = (x \mapsto  u)$ ($\forall x \in [0, 1)$) is such that $v^i\notin M(C(C_f))$ ($i=1,...,n-1$), $v^n \in M(C(C_f))$. Homomorphism $C(C_f)\to C(C_f)\{v\}$ corresponds to a $n$-listed covering projection  $\widetilde{C_f}\to C_f$ from the example \ref{n_cirle_cov}, and $C(\widetilde{C_f})\approx C(C_f)\{v\}$.

\end{exm}
\begin{empt}\label{tors_special_case_general}{\it General construction}.
Above construction can be generalized. Let $A$ be a $C^*$-algebra such that $K^1(A)= G \oplus \mathbb{Z}_n$, where $G$ is an abelian group. From (\ref{uct_c}) it follows that $K_0(A) \approx G' \oplus \mathbb{Z}_n$.
Let $Q^s(A)= M(A \otimes  \mathcal{K})/ (A \otimes \mathcal{K}$) be the stable multiplier algebra of $C^*$-algebra $A$. Then from \cite{blackadar:ko} it follows that $K_1(Q^s(A))=K_0(A)$. Let $u\in U(Q^s(A))$ be such that $K_1(Q^s(A))=G' \oplus \mathbb{Z}_n[u]$. Let $\phi$ be a $n^{\mathrm{th}}$ root of identity map such that $\phi(u^n)=u$. Let $p:  M(A \otimes  \mathcal{K}) \to  M(A \otimes  \mathcal{K}) / (A \otimes  \mathcal{K})$ be natural surjective *- homomorphism to the quotient. It is known \cite{blackadar:ko} that unitary element $v \in  U(Q^s)$ can be lifted to an unitary element $v'\in U(M(A \otimes \mathcal{K}))$ (i.e. $v = p(v')$) if and only if $[v]=0\in K^1(Q^s(A))$.  From $n[u]=[u^n]=0$ it follows that there is an unitary $w \in U(M(A \otimes \mathcal{K})$) such that $p(w)=u^n$. Let $M(A \otimes  \mathcal{K})\rightarrow B(H)$ be a faithful representation, then $\phi(w)\in U(B(H))$. If $\phi(w)\in M(A \otimes  \mathcal{K}))$ then $p(\phi(w)) = u$, however it is impossible because $[u] \neq 0 \in K^1(Q^s(A))$. So $\phi(w) \notin M(A \otimes  \mathcal{K})$ and similarly  $\phi(w)^i \notin M(A \otimes  \mathcal{K})$ ($i = 1,..., n -1)$. So we have a generated by $\phi(w)$ extension $A\otimes \mathcal{K} \rightarrow (A \otimes \mathcal{K})\{\phi(w)\}$ which corresponds to a Galois triple $\left(A\otimes\mathcal{K}, \ \left(A\otimes\mathcal{K} \right)\{\phi(w)\}, \ \mathbb{Z}_n\right)$.
\end{empt}
\begin{exm}
Let $O_n$ be a Cuntz algebra \cite{blackadar:ko}, $K_0(O_n)=\mathbb{Z}_{n-1}$. If $w\in U(M(O_n\otimes \mathcal{K}))$ satisfies conditions of  construction  \ref{tors_special_case_general} and $\phi$ is a $(n-1)^{\mathrm{th}}$ root of unity then there is a Galois triple $(O_n \otimes \mathcal{K}, \ (O_n \otimes \mathcal{K})\{\phi(w)\}, \ \mathbb{Z}_{n-1})$. However it is not known whether the natural *-homomorphism $O_n \otimes \mathcal{K}\to (O_n \otimes \mathcal{K})\{\phi(w)\}$ strictly outer.
\end{exm}

\section{Appendix A. Grassmannian connection}

 Let $A$ be an algebra over $\mathbb{C}$-algebra. A module $\Omega^1A$ of noncommutative differential 1 forms is constructed as follows. Let $\overline{A^+}$ denote the quotient vector space $A^+/ \mathbb{C}$ by scalar multipliers of identity. Let
 \begin{equation*}
 \Omega^1A = A^+\otimes \overline{A^+}.
 \end{equation*}
 Let $d:A \to \Omega^1A$ be such that
 \begin{equation*}
 da = 1 \otimes \overline{a}, \ (a \in A)
 \end{equation*}
 where $\overline{a} = a + \mathbb{C} \in \overline{A^+}$.
 Let $E$ be a right $A$ module. A connection on $E$ to be an operator $\nabla :E \to E\otimes_A \Omega^1(A)$ satisfying the Leibniz rule
 \begin{equation*}
\nabla(\xi a)=\nabla(\xi)a + \xi \otimes da, \ (\xi \in E, a \in A).
 \end{equation*}

 Consider for example a free right module $V \otimes A$, where $V$ is a finite dimensional vector space,
 and identify the forms having values in $V \otimes A$ by means of the canonical
 isomorphism
 \begin{equation*}
 V \otimes \Omega^1 A = (V \otimes A) \otimes_A \Omega^1 A.
 \end{equation*}

 Then we have a canonical connection given by the operator $\nabla = 1 \otimes d$ on $V \otimes \Omega^1 A$. As another example, suppose that the right module $E$ is a direct summand
 of $V \otimes A$, and let $i : E \to V \otimes A$ and p : $V \otimes A \to E$ be the inclusion
 and projection maps. Then on $E$ we have an induced connection, called the
{\it  Grassmannian connection} \cite{cuntz_quillen:alg_ext}, which is given by the composition
\begin{equation}\nonumber
E \otimes_A \Omega^1 A \xrightarrow{i} V \otimes \Omega^1 A \xrightarrow{1 \otimes d} V \otimes \Omega^1 A \xrightarrow{p} E \otimes_A \Omega^1 A.
\end{equation}

 Thus in this notation the Grassmannian connection is
 \begin{equation}\nonumber
 \nabla = p (1 \otimes d) i.
 \end{equation}

 If $A$ is an operator algebra then Grassmanian connection can be generalized.
 \begin{defn}\label{column_space}
 Let $H$ be a Hilbert space with countable basis, $\mathcal{H} = B(\mathbb{C}, H)$. Operator space $\mathcal{H}$ is said to be a {\it column Hilbert space}.
 \end{defn}
 Let $\mathcal{H}_A = \mathcal{H} \otimes A$ be a Haagerup vector product of operator spaces \cite{helemsky:qfa}. Then we also have a canonical connection $\nabla : \mathcal{H}_A \to \mathcal{H}_A \otimes_{A} \Omega^1A$. If $E$ is a direct summand of $\mathcal{H}_A$ then we also have Grassmanian connection given by the composition
 \begin{equation*}\label{grass_connection}
 E \otimes_A \Omega^1 A \xrightarrow{i} \mathcal{H}_A \otimes \Omega^1 A \xrightarrow{1 \otimes d} \mathcal{H}_A \otimes \Omega^1 A \xrightarrow{p} E \otimes_A \Omega^1 A.
 \end{equation*}

\end{document}